\documentclass[11pt]{article}

\usepackage{amsfonts,amsthm,amssymb,amsmath}
\usepackage[title,titletoc,toc]{appendix}
\usepackage[utf8]{inputenc}
\usepackage[affil-it]{authblk}
\usepackage{xcolor}
\usepackage{tikz}

\usepackage[T1]{fontenc}
\usepackage{stmaryrd}

\newtheorem{theorem}{Theorem}[section]

\newtheorem{proposition}[theorem]{Proposition}
\newtheorem{remark}[theorem]{Remark}

\newtheorem{lemma}[theorem]{Lemma}

\newtheorem{corollary}[theorem]{Corollary}
\newtheorem{definition}[theorem]{Definition}

\usepackage{scalerel,stackengine}
\stackMath
\newcommand\reallywidehat[1]{%
\savestack{\tmpbox}{\stretchto{%
  \scaleto{ \scalerel*[\widthof{\ensuremath{#1}}]{\kern.1pt\mathchar"0362\kern.1pt} {\rule{0ex}{\textheight}} }{\textheight}}{2.4ex}}
\stackon[-6.9pt]{#1}{\tmpbox}}

\begin{document}

\title{New Stochastic Fubini Theorems
}

\author{Tahir Choulli\\ 
Mathematical and Statistical Sciences Dept.\\
University of Alberta, Edmonton, Canada\\
tchoulli@ualberta.ca
\and 
Martin Schweizer\\ 

ETH Z\"urich\\
Departement Mathematik\\
ETH-Zentrum, HG G 51.2\\
R\"amistrasse 101\\
 CH -- 8092 Z\"urich\\
martin.schweizer@math.ethz.ch}

\maketitle

\begin{abstract}
The classic stochastic Fubini theorem says that if one stochastically integrates
with respect to a semimartingale $S$ an $\eta(dz)$-mixture of $z$-parametrized integrands
$\psi^z$, the result is just the $\eta(dz)$-mixture of the individual $z$-parametrized
stochastic integrals $
\int\psi^z{d}S.$ But if one wants to use such a result for the study
of Volterra semimartingales of the form $ X_t =\int_0^t \Psi_{t,s}dS_s, t \geq0,$ the classic assumption that one has a fixed measure $\eta$ is too restrictive; the mixture over
the integrands needs to be taken instead with respect to a stochastic kernel on the parameter space. To handle that situation and prove a corresponding
new stochastic Fubini theorem, we introduce a new notion of measure-valued stochastic integration with respect to a general multidimensional semimartingale. As an application, we show how this allows to handle a class of quite general stochastic Volterra semimartingales.
\end{abstract}

\noindent{\bf Keywords}: stochastic Fubini theorem, measure-valued stochastic integrals, weak-Fubini property \and Volterra semimartingales.\\

\noindent{\bf MSC 2000 Classification Numbers:} 60H05, 28B05, 60G48\\

\section{Introduction}
The standard setup for a stochastic Fubini theorem is that one has a semimartingale $S$, a
parametric family $\psi^z$ of processes indexed by $z$ from some space $\cal{Z}$, and a (fixed) measure $\eta$ on $\cal{Z}$. For each $z$, the process $\psi^z$ is assumed to be $S$-integrable, and so is the mixture $\bar{\psi}^{\eta}:=\int\psi^z\eta(dz)$. The typical classic stochastic Fubini theorem then has the form
\begin{equation*}
\int_{(0,\cdot]}\left(\int_{\cal{Z}}\psi^z\eta(dz)\right)dS_t=\int_{(0,\cdot]}\bar{\psi}^{\eta}_t{d}S_t=\int_{\cal{Z}}\left(\int_{(0,\cdot]} \psi^z_t{d}S_t\right)\eta(dz);\tag{0.1}
\end{equation*}
see for instance Th\'eor\`eme 5.44 of Jacod (1979), Lebedev (1995) [which corrects an error in
Jacod (1979)], Theorem IV.65 of Protter (2005), or Bichteler/Lin (1995) [which extends the
result in (the first, 1990 edition of) Protter (2005) from an $L^2$- to an $L^1$-setting]. A good
overview with very weak assumptions has recently been given by Veraar (2012).

The Fubini property (0.1) says that if one stochastically integrates a mixture (over $z$,
with respect to $\eta$) of integrands, the result is given by the analogous mixture of the individual
stochastic integrals. This looks very general, but it actually imposes one very restrictive
assumption --- one has a single fixed measure ? which mixes over the parameter $z$, and
which does not depend on the randomness $\omega$ (nor on time $t$). Some very natural applications
like the study of Volterra semimartingales of the form $X_t=\int_0^t\Psi_{s,t}dS_s$, $t\geq 0$, lead almost
immediately outside the scope of the above standard setup. In this paper, we therefore study
what can be done if one replaces $\eta(dz)$ by a kernel  $\rho(\omega,t,dz)$ from $\Omega\times(0,\infty)\times{\cal{P}})$ to $\cal{Z}$. One can then still easily make sense of the left-hand side of (0.1) by just mixing the integrands
with $\rho$ instead of $\eta$, so that one considers $\bar{\psi}^{\rho}:=\int\psi^z_t\rho_t(dz)$; but it is a priori not clear how the result can be expressed on the right-hand side sonce it makes no sense (and is not even well defined) to take there a mixture of  $\int\psi^z_t{d}S_t$ with $\rho_t(dz)$.

To answer this question, we have to take a new viewpoint and develop a new theory. Our
solution will be to view $\varphi_t(dz):= \psi^z_t\rho_t(dz)$ as a {\it measure-valued stochastic integral} of $\varphi$ with respect to $S$, and to obtain a {\it weak}* {\it stochastic
Fubini theorem} by plugging in test functions $f$ or indicators $I_D$ of sets $D\subseteq{\cal{Z}}$. This will
allow us to make sense of a generalized form of (0.1). We explain below in more detail why
this needs in particular a new stochastic integral.

Our approach combines and generalises two ideas from the existing literature in a nontrivial
way. The idea of viewing stochastic Fubini theorems via more abstract integrands
can be traced back to van Neerven/Veraar (2006) who proved a stochastic Fubini theorem
for an infinite-dimensional integrator $S$ (a cylindrical Brownian motion). But they still work
with a fixed reference measure $\eta$ on $\cal{Z}$, and so their integrands have values in $L^1({\cal{Z}},\eta)$,
since one can identify a finite measure $m\ll\eta$ with its Radon--Nikod\'ym derivative 
${{dm}\over{d\eta}}$.
Moreover, their whole approach does not extend substantially beyond the case of Brownian motion as integrator. For more general measure-valued integrands, Bj\"ork/Di Masi/Kabanov/Runggaldier (1997) develop a stochastic integration theory with respect to a function-valued
integrator $S$, with the resulting integral being real-valued. The construction and setup for
our measure-valued stochastic integral is strongly inspired by that work. However, Bj\"ork/Di
Masi/Kabanov/Runggaldier (1997) did not look at the issue of more general stochastic Fubini
theorems at all, and their integral processes are real-valued which makes their definition and
construction reasonably straightforward.

One important application as well as motivation is the study of Volterra semimartingales
of the form $X_t=\int_0^t\Psi_{s,t}dS_s$ with a two-parameter process $\Psi=(\Psi_{t,s})$. If $\Psi$ is, with respect to
$s$, predictable and $S$-integrable for each $t$, including on the diagonal $s=t$, and, with respect
to $t$, of finite and $S$-integrable variation, we show that $X$ is again a semimartingale, and that
we can say more about its decomposition. This substantially generalizes a result by Protter
(1985) which was obtained under the restrictive condition that each $\Psi_{t,s}$ is, with respect to $t$,
absolutely continuous with respect to Lebesgue measure. That assumption allows to reduce
the question to a classic stochastic Fubini theorem, which is no longer possible if absolute
continuity (with respect to a fixed reference measure) is generalized to finite variation.

Our results rely on a stochastic integral of measure-valued processes with respect to a
general $\mathbb{R}^d$-valued semimartingale, with the integral also taking values in a space of measures.
This is an unusual setup and to the best of our knowledge not covered by existing literature.
One often finds stochastic integrals where either integrand or integrator are measure-valued
and integrator respectively integrand take values in a corresponding dual space; see for instance
Mikulevicius/Rozovskii (1998). We also mention the paper by Krylov (2011) who
proves an It\^o-Wentzell formula for distribution-valued processes by plugging in test functions
and using or extending classic results. The crucial point here is that the integral processes in
all these cases are still real-valued. If one looks instead at integrals with values in a Banach
space, the construction of a stochastic integral (even with respect to Brownian motion) typically
becomes nontrivial and depends on the geometry of the Banach space; see for example
Dettweiler (1985). Some recent papers have managed to overcome this difficulty by reformulating
the It\^o isometry property via an operator-theoretic approach. But this has been done
only for the case where the integrator is a (possibly infinite-dimensional) Brownian motion,
and the approach also crucially exploits the underlying Gaussian structure; see for instance
van Neerven/Veraar/Weis (2007). A recent overview of this theory with many references
can be found in the survey paper by van Neerven/Veraar/Weis (2013). For semimartingale
integrators, corresponding results do not seem to exist yet.

This paper is organized as follows. We present the basic
setup and some preliminary results in Section 2. Section 3 develops the stochastic integral of measure-valued processes $\varphi$ with respect to the semimartingale $S$, and this integral is vital for our new stochastic Fubini theorems. Section 4 contains new stochastic Fubini
theorems. We start with the case when $S$ is a locally square integrable local martingale, in two variants for different applications. Then, we extend the results to the semimartingale case afterwards. Section 5 shows how these results can be used to study the above Volterra semimartingale $X$, and we also explain how they help to obtain a decomposition for $X$. Section 6 discusses the connection between our new stochastic Fubini theorem and the classical stochastic Fubini theorem, while Section 7 illustrates an example. 

\section{Preliminaries}
In this section, we collect some basic notations and concepts.

\subsection{Notations}
Let $(\Omega, \mathcal{F}, P)$ be a probability space with a filtration $\mathbb{F}=\left(\mathcal{F}_{t}\right)_{t \geq 0}$ satisfying the usual conditions of right-continuity and completeness. Throughout the paper, we set $\mathcal{F}_{\infty}:=\bigvee_{t \geq 0} \mathcal{F}_{t}=\sigma\left(\bigcup_{t \geq 0} \mathcal{F}_{t}\right)$ and denote by $\mathcal{P}$ the predictable $\sigma$-field on $\bar{\Omega}:=\Omega \times(0, \infty)$. Standard terminology and results from stochastic calculus can be found in Dellacherie/Meyer [9, Chapters VI-VIII] and Jacod/Shiryaev [16, Chapter I]. Identifying as usual processes that are indistinguishable, we define $$\mathcal{R}^{0}:=\left\{\mbox{all real-valued adapted RCLL processes}\ Z=\left(Z_{t}\right)_{t \geq 0}\right\}.$$ For any $Z \in \mathcal{R}^{0}$, we set $Z_{t}^{*}:=\sup _{0 \leq s \leq t}\left|Z_{s}\right|$ and $Z_{t-}^{*}:=\sup _{0 \leq s<t}\left|Z_{s}\right|$ for $t \geq 0$, with $Z_{\infty}^{*}=Z_{\infty-}^{*}:=\sup _{t \geq 0}\left|Z_{t}\right|$. Note that $Z^{*}=\left(Z_{t}^{*}\right)_{t \geq 0}$ is in $\mathcal{R}^{0}$, but $Z_{\infty}^{*}$ may take the value $+\infty$. We equip $\mathcal{R}^{0}$ with the topology of ucp-convergence (i.e., $Z^{n} \rightarrow Z$ if $\left(Z^{n}-Z\right)_{t}^{*} \rightarrow 0$ in probability for every $t \geq 0$ ). We also need the Banach space 
$$\mathcal{R}^{2}:=\left\{Z \in \mathcal{R}^{0}:\|Z\|_{\mathcal{R}^{2}}:=\left\|Z_{\infty}^{*}\right\|_{L^{2}}<\infty\right\}.$$
For any nonnegative increasing process $A$ in $\mathcal{R}^{0}$ and any (possibly multidimensional) product-measurable $H=\left(H_{t}\right)_{t>0}$ on $\bar{\Omega}$, we define the process $\mathcal{D}(H ; A)$ by
$$
\mathcal{D}_{t}(H ; A):=\int_{0}^{t}\left|H_{r}\right|^{2} \mathrm{~d} A_{r}:=\int_{(0, t]}\left|H_{r}\right|^{2} \mathrm{~d} A_{r} \in[0, \infty] \quad \text { for all } t \geq 0
$$
Note that $\mathcal{D}(H ; A)$ is always increasing, null at 0 and may take the value $+\infty$; it is in addition adapted if $H$ is progressively measurable (and in particular if $H$ is predictable). For any stopping time $\tau$, we define the measure $\mu_{\tau, A}$ on the product $\sigma$-field of $\bar{\Omega}$ by
$$
\mu_{\tau, A}(C):=E\left[A_{\tau-} \mathcal{D}_{\tau-}\left(I_{C} ; A\right)\right]=E\left[A_{\tau-}(\omega) \int_{0}^{\tau-(\omega)} I_{C}(\omega, r) \mathrm{d} A_{r}(\omega)\right]
$$
so that
$$
\|H\|_{L^{2}\left(\mu_{\tau, A}\right)}^{2}=E\left[A_{\tau-} \int_{0}^{\tau-}\left|H_{r}\right|^{2} \mathrm{~d} A_{r}\right]=E\left[A_{\tau-} \mathcal{D}_{\tau-}(H ; A)\right]
$$
Note that both $A$ and $\mathcal{D}(1 ; A)=A-A_{0}$ are in $\mathcal{R}^{0}$. Therefore the product $A \mathcal{D}(1 ; A)$ is prelocally bounded, and we can find a localizing sequence $\left(\tau_{M}\right)_{M \in \mathbb{N}}$ of stopping times such that $\mu_{\tau_{M}, A}$ is a finite measure for each $M \in \mathbb{N}$. Note that if $H$ is bounded by a constant $C$, say, then $\mathcal{D}(H ; A) \leq C^{2} A$ so that any bounded process is in $L^{2}\left(\mu_{\tau, A}\right)$ as soon as $A_{\tau-} \in L^{2}$.
\subsection{Semimartingales and stochastic integrals}
For semimartingales and stochastic integration, we use the approach due to M\'etivier and Pellaumail as presented in the textbook by M\'etivier [21]. First, an $\mathbb{R}^{d}$-valued simple predictable process is of the form
\begin{equation*}
H=\sum_{\ell=0}^{L-1} h_{\ell} I_{A_{\ell} \times\left(t_{\ell}, t_{\ell+1}\right]} \tag{2.1}
\end{equation*}
where $L \in \mathbb{N}, 0 \leq t_{0}<t_{1}<\cdots<t_{L}<\infty, A_{\ell} \in \mathcal{F}_{t_{\ell}}$ and $h_{\ell}$ is $\mathbb{R}^{d}$-valued, bounded and $\mathcal{F}_{t_{\ell}}$-measurable. If the $h_{\ell}$ are nonrandom, we call $H$ very simple and denote by $\mathcal{H}$ the family of all very simple predictable processes. For any $\mathbb{R}^{d}$-valued adapted RCLL process $S=\left(S_{t}\right)_{t \geq 0}$ and $H$ as in (2.1), we define the stochastic integral process $H \cdot S=\int H \mathrm{~d} S$ by
$$
H \cdot S_{t}:=(H \cdot S)_{t}:=\sum_{\ell=0}^{L-1} I_{A_{\ell}} h_{\ell}^{\top}\left(S_{t_{\ell+1} \wedge t}-S_{t_{\ell} \wedge t}\right)=\sum_{\ell=0}^{L-1} I_{A_{\ell}} h_{\ell}^{\top}\left(S_{t_{\ell+1}}^{t}-S_{t_{\ell}}^{t}\right), \quad t \geq 0
$$
It is clear that $H \cdot S$ is in $\mathcal{R}^{0}$ and null at 0 . A control process is a nonnegative increasing process $V=\left(V_{t}\right)_{t \geq 0}$ in $\mathcal{R}^{0}$ such that for any stopping time $\tau$,
\begin{equation*}
E\left[\sup _{0 \leq t<\tau}\left|\int_{0}^{t} H_{r} \mathrm{~d} S_{r}\right|^{2}\right] \leq E\left[V_{\tau-} \int_{0}^{\tau-}\left|H_{r}\right|^{2} \mathrm{~d} V_{r}\right] \quad \text { for any } H \in \mathcal{H} \text {. } \tag{2.2}
\end{equation*}
We denote by $\mathcal{V}(S)$ the family of all control processes for $S$. By Métivier [21, Theorem 23.14], $S$ is a semimartingale if and only if it admits a control process, i.e., $\mathcal{V}(S) \neq \emptyset$. Indeed, with $\mathbb{H}=\mathbb{R}^{d}$, the "if" part follows from part $\left(2^{\circ}\right)$ of that theorem, and the "only if" part follows from part $\left(1^{\circ}\right)$ with $\mathbb{G}=\mathbb{R}^{d}$, noting that one can easily extend (2.2) from very simple to simple predictable processes.

Now suppose $S$ is an $\mathbb{R}^{d}$-valued semimartingale. Using the notations from Section 2.1 and choosing $V \in \mathcal{V}(S) \neq \emptyset$, we can rewrite (2.2) compactly, for any stopping time $\tau$, as
\begin{equation*}
\left\|H \cdot S^{\tau-}\right\|_{\mathcal{R}^{2}}=\left\|(H \cdot S)^{\tau-}\right\|_{\mathcal{R}^{2}} \leq\|H\|_{L^{2}\left(\mu_{\tau, V}\right)} \quad \text { for any } H \in \mathcal{H} \tag{2.3}
\end{equation*}
If $\tau$ is such that $V_{\tau-} \in L^{2}$ (or, equivalently, $\mu_{\tau, V}$ is a finite measure), then by [21, Sections 24.1 and 26.1], any $\mathbb{R}^{d}$-valued predictable process $H$ with $\|H\|_{L^{2}\left(\mu_{\tau, V}\right)}<\infty$ is integrable with respect to $S$ on $\llbracket 0, \tau \llbracket$ so that $(H \cdot S)^{\tau-}$ is well defined and a real-valued semimartingale. As $S=S^{\tau-}$ on $\llbracket 0, \tau \llbracket$, we can also write $(H \cdot S)^{\tau-}=H \cdot S^{\tau-}$ and view this equivalently either as the stochastic integral of $H$ with respect to $S$ on $\llbracket 0, \tau \llbracket$, or as the stochastic integral of $H$ with respect to $S^{\tau-}$ on $\llbracket 0, \infty \rrbracket=[0, \infty) \times \Omega$. From the construction in [21], we also have, extending (2.3), that (again for $V_{\tau-} \in L^{2}$)
\begin{equation*}
\left\|H \cdot S^{\tau-}\right\|_{\mathcal{R}^{2}} \leq\|H\|_{L^{2}\left(\mu_{\tau, V}\right)}\ \text { for any } \mathbb{R}^{d} \text {-valued predictable } H \in L^{2}\left(\mu_{\tau, V}\right) \tag{2.4}
\end{equation*}
To get rid of the stopping time $\tau$, fix a control process $V \in \mathcal{V}(S)$. Take any $\mathbb{R}^{d}$-valued predictable process $H$ such that $\mathcal{D}(H ; V)=\int\left|H_{r}\right|^{2} \mathrm{~d} V_{r}$ is finite-valued (i.e., in $\mathcal{R}^{0}$ ) and note that both $V$ and $\mathcal{D}(H ; V)$ are in $\mathcal{R}^{0}$ and hence prelocally bounded. Choose a localising sequence $\left(\tau_{M}\right)_{M \in \mathbb{N}}$ with $V_{\tau_{M}-} \in L^{2}$ and $H \in L^{2}\left(\mu_{\tau_{M}, V}\right)$, for each $M \in \mathbb{N}$. This allows us to define the stochastic integral $H \cdot S=\int H \mathrm{~d} S$ on every stochastic interval $\llbracket 0, \tau_{M} \llbracket$ and hence on $\llbracket 0, \infty \rrbracket=[0, \infty) \times \Omega$ by pasting together. It is shown in $[21$, Section 24.1$]$ that this stochastic integral does not depend on the choice of localising sequence, nor on the choice of the control process $V \in \mathcal{V}(S)$.
\subsection{Measure-valued processes}
Fix a compact metric space $\mathcal{K}$ equipped with its Borel $\sigma$-field $\mathcal{B}(\mathcal{K})$, denote by $\mathbb{M}:=\mathbb{M}(\mathcal{K})$ the space of signed and finite measures $m$ on $\mathcal{B}(\mathcal{K})$ and recall that each $m \in \mathbb{M}$ can be written as $m=m^{+}-m^{-}$for two unique finite measures $m^{ \pm}$on $\mathcal{B}(\mathcal{K})$ with disjoint supports. The total variation measure of $m$ is $|m|=m^{+}+m^{-}$. Write $C(\mathcal{K}):=C(\mathcal{K} ; \mathbb{R})$ for the space of continuous functions $f: \mathcal{K} \rightarrow \mathbb{R}$ with the sup-norm $\|f\|_{\infty}$ and denote the closed unit ball in $C(\mathcal{K})$ by $U_{1}:=\left\{f \in C(\mathcal{K}):\|f\|_{\infty} \leq 1\right\}$. The integral of $f$ with respect to $m$ is
$$
m(f):=\int f \mathrm{~d} m:=\int_{\mathcal{K}} f(z) m(\mathrm{~d} z) \quad \text { for } f \in C(\mathcal{K}) \text { and } m \in \mathbb{M}
$$
The variation norm of $m \in \mathbb{M}$ (not to be confused with $|m|$) is then given by
$$
\|m\|_{\mathrm{var}}:=\sup \left\{m(f): f \in U_{1}\right\}=|m|(\mathcal{K}) .
$$
We take on $\mathbb{M}$ the $\sigma$-field $\mathcal{M}$ generated by the weak* topology, i.e. by all the mappings $m \mapsto m(f)$ with $f \in C(\mathcal{K})$. For $d \in \mathbb{N}$ and $m=\left(m^{i}\right)_{i=1, \ldots, d} \in \mathbb{M}^{d}$, we set
$$
\|m\|_{\mathrm{var}}:=\left(\left\|m^{i}\right\|_{\mathrm{var}}\right)_{i=1, \ldots, d}, \quad m(f):=\left(m^{i}(f)\right)_{i=1, \ldots, d}
$$
Finally, we denote by $b B_{1}(\mathcal{K})$ the space of all bounded $g: \mathcal{K} \rightarrow \mathbb{R}$ of Baire class 1 . Recall that $g \in b B_{1}(\mathcal{K})$ if and only if $g$ is a pointwise limit of continuous functions; see e.g. Kuratowski [19, Theorems II.VIII. 1 and II.VIII.7].

For ease of reference, we list here some properties used later and give the corresponding references to the textbook by Aliprantis/Border [2]. Because $\mathcal{K}$ is compact and metric, it is Hausdorff and Polish by [2, Theorem 3.28 and Lemma 3.26], and $C(\mathcal{K})$ is separable for the sup-norm by $[2$, Lemma 3.99]. In turn, separability of $C(\mathcal{K})$ implies that the closed unit ball of its dual space $\mathbb{M}(\mathcal{K})=\mathbb{M}$ is metrisable for the weak* topology by [2, Theorem 6.30], and the same also holds for $\mathbb{M}^{d}=\left((C(\mathcal{K}))^{d}\right)^{*}$. Because $\mathcal{K}$ is Polish, every finite (nonnegative) measure $\mu$ on $\mathcal{B}(\mathcal{K})$ is regular by $\left[2\right.$, Theorem 12.7], and therefore $C(\mathcal{K})$ is dense in $L^{p}(\mu)$ for every $p \in[1, \infty)$, but not for $p=\infty$; see $[2$, Theorem 13.9].

We want to use (signed) measure-valued processes as integrands for a stochastic integral. A process $\varphi=\left(\varphi_{t}\right)_{t \geq 0}$ on $(\Omega, \mathcal{F}, P)$ with values in $\mathbb{M}^{d}$ is weak $k^{*}$ predictable if the $\mathbb{R}^{d}$-valued process $\varphi(f)=\left(\varphi_{t}(f)\right)_{t \geq 0}$ is predictable for each $f \in C(\mathcal{K})$. In view of the definition of the $\sigma$-field $\mathcal{M}$, this is equivalent to saying that $\varphi: \bar{\Omega} \rightarrow \mathbb{M}^{d}$ is $\mathcal{P}$ - $\mathcal{M}^{d}$-measurable. We remark for later use that the $\mathbb{R}^{d}$-valued process $\|\varphi\|_{\text {var }}=\left(\left\|\varphi_{t}\right\|_{\text {var }}\right)_{t \geq 0}$ is then predictable. Indeed, by separability of $C(\mathcal{K})$, we can take a countable dense subset $\underline{u}=\left(u_{k}\right)_{k \in \mathbb{N}}$ of $U_{1}$ and obtain $\sup _{f \in U_{1}} G(f)=\sup _{k \in \mathbb{N}} G\left(u_{k}\right)$ for any continuous function $G: C(\mathcal{K}) \rightarrow \mathbb{R}$. Taking $G(f)=\varphi_{t}^{i}(f)(\omega)$ for $i=1, \ldots, d$ thus yields for each $(\omega, t) \in \bar{\Omega}$ that
$$
\left\|\varphi^{i}\right\|_{\operatorname{var}}(\omega, t)=\left\|\varphi_{t}^{i}\right\|_{\operatorname{var}}(\omega)=\sup _{f \in U_{1}}\left(\varphi_{t}^{i}(f)\right)(\omega)=\sup _{k \in \mathbb{N}}\left(\varphi^{i}\left(u_{k}\right)\right)(\omega, t)
$$
As each $\varphi\left(u_{k}\right)$ is predictable, so is then $\|\varphi\|_{\text {var }}$ as a countable supremum.
\section{Stochastic integrals}
In this section, we introduce the stochastic integral of measure-valued processes $\varphi$ with respect to the semimartingale $S$. This will be achieved in three subsections. The first subsection defines this integral for elementary measure-valued processes $\varphi$. The second subsection presents some approximation results for general measure-valued processes $\varphi$, while the third (last) subsection  uses these approximation to deal with the integral of general measure-valued integrands $\varphi$. 
\subsection{Simple integrands and integrals}
To start our integration theory, let $\mathcal{E}$ be the family of $\mathbb{M}^{d}$-valued processes $\varphi$ of the form

\begin{equation*}
\varphi=\sum_{\ell=0}^{L-1} m_{\ell} I_{A_{\ell} \times\left(t_{\ell}, t_{\ell+1}\right]}=: \sum_{\ell=0}^{L-1} m_{\ell} I_{D_{\ell}} \tag{3.1}
\end{equation*}

with $L \in \mathbb{N}, 0 \leq t_{0}<t_{1}<\cdots<t_{L}<\infty$, each $m_{\ell} \in \mathbb{M}^{d}$ and each $A_{\ell} \in \mathcal{F}_{t_{\ell}}$ so that $D_{\ell} \in \mathcal{P}$. Note that the coefficients $m_{\ell}$ do not depend on $\omega$. For every $f \in C(\mathcal{K})$, the process

$$
\varphi(f)=\sum_{\ell=0}^{L-1} m_{\ell}(f) I_{D_{\ell}}=\sum_{\ell=0}^{L-1} m_{\ell}(f) I_{A_{\ell} \times\left(t_{\ell}, t_{\ell+1}\right]}
$$
is then an $\mathbb{R}^{d}$-valued very simple predictable process (i.e., in $\mathcal{H}$ ), which justifies calling $\varphi \in \mathcal{E}$ a very simple $\mathbb{M}^{d}$-valued weak* predictable process. Similarly, we call a process $N=\left(N_{t}\right)_{t \geq 0}$ a weak* semimartingale, or a weak process in $\mathcal{R}^{2}$, if the real-valued process $N(f)=\left(N_{t}(f)\right)_{t \geq 0}$ is a semimartingale, respectively in $\mathcal{R}^{2}$, for each $f \in C(\mathcal{K})$. Here, each $N_{t}(\omega)$ is assumed to be a linear (but not necessarily continuous) functional on $C(\mathcal{K})$, hence can be viewed as a (signed and finite) finitely additive measure on $\mathcal{B}(\mathcal{K})$, and then $N_{t}(f):=\int_{\mathcal{K}} f(z) N_{t}(\mathrm{~d} z)$. Such functionals are sometimes also called finite charges, and we sometimes call such a process $N$ a charge-valued process.

In order to define a stochastic integral $\varphi \bullet S$ for a fixed $\mathbb{R}^{d}$-valued semimartingale and suitably general $\mathbb{M}^{d}$-valued $\varphi$, we start with $\varphi \in \mathcal{E}$ and then extend to a larger class of integrands. Note that while both $\varphi$ and $S$ are $d$-dimensional, the resulting integral $\varphi \bullet S$ is one-dimensional, in analogy to the classic vector stochastic integral.

Because the description (2.2) of the semimartingale property is in prelocal form (i.e., on a right-open stochastic interval $\llbracket 0, \tau \llbracket$ ), we also construct the stochastic integral $\varphi \bullet S$ on $\llbracket 0, \tau \llbracket$ and only piece things together globally at the end.
\begin{lemma} Fix an $\mathbb{R}^{d}$-valued semimartingale $S$, a control process $V$ for $S$ and a stopping time $\tau$ with $V_{\tau-} \in L^{2}$. For each $\varphi \in \mathcal{E}$, there exists a process $\varphi \bullet S^{\tau-}=\left(\varphi \bullet S_{t}^{\tau-}\right)_{t \geq 0}$ with values in $\mathbb{M}$ which is a weak* semimartingale in $\mathcal{R}^{2}$. This process has the regular weak* Fubini property
\begin{equation*}
\left(\varphi \bullet S_{t}^{\tau-}\right)(f)=\varphi(f) \cdot S_{t}^{\tau-} \quad \text { for } t \geq 0 \text { and } f \in C(\mathcal{K}) \tag{3.2}
\end{equation*}
More precisely, (3.2) states that for each $f \in C(\mathcal{K})$, the two processes $\left(\varphi \bullet S^{\tau-}\right)(f)$ and $\varphi(f) \cdot S^{\tau-}$ are indistinguishable. Written out in terms of integrals, (3.2) takes the form
$$
\int_{\mathcal{K}} f(z)\left(\int_{0}^{t} \varphi_{r} \mathrm{~d} S_{r}^{\tau-}\right)(\mathrm{d} z)=\int_{0}^{t}\left(\int_{\mathcal{K}} f(z) \varphi_{r}(\mathrm{~d} z)\right) \mathrm{d} S_{r}^{\tau-} \quad \text { for } t \geq 0 \text { and } f \in C(\mathcal{K})
$$
which explains the terminology. In particular, we have for $\varphi \in \mathcal{E}$ the inequality
\begin{equation*}
\left\|\left(\varphi \bullet S^{\tau-}\right)(f)\right\|_{\mathcal{R}^{2}} \leq\|\varphi(f)\|_{L^{2}\left(\mu_{\tau, V}\right)} \quad \text { for } f \in C(\mathcal{K}) \tag{3.3}
\end{equation*}
Moreover, $\varphi \bullet S^{\tau-}$ also has the general weak* Fubini property
\begin{equation*}
\left(\varphi \bullet S_{t}^{\tau-}\right)(g)=\varphi(g) \cdot S_{t}^{\tau-} \quad \text { for } t \geq 0 \text { and } g \text { bounded and } \mathcal{B}(\mathcal{K}) \text {-measurable. } \tag{3.4}
\end{equation*}
Finally, (3.3) also holds with $f \in C(\mathcal{K})$ replaced by $g$ bounded and $\mathcal{B}(\mathcal{K})$-measurable.\end{lemma}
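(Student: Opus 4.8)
The plan is to write down $\varphi\bullet S^{\tau-}$ by an explicit pathwise formula and then read off every assertion from the one-dimensional theory recalled in Section 2.2. For $\varphi=\sum_{\ell=0}^{L-1}m_\ell I_{A_\ell\times(t_\ell,t_{\ell+1}]}\in\mathcal E$ with $m_\ell=(m_\ell^i)_{i=1,\dots,d}\in\mathbb M^d$, and writing $v^\top m:=\sum_{i=1}^d v^i m^i\in\mathbb M$ for $v\in\mathbb R^d$ and $m\in\mathbb M^d$, I would define
$$\varphi\bullet S_t^{\tau-}:=\sum_{\ell=0}^{L-1}I_{A_\ell}\bigl(S_{t_{\ell+1}\wedge t}^{\tau-}-S_{t_\ell\wedge t}^{\tau-}\bigr)^{\top}m_\ell,\qquad t\ge0.$$
For each fixed $(\omega,t)$ this is a finite real-linear combination of the fixed finite signed measures $m_\ell^i$, hence genuinely an element of $\mathbb M$ (and not merely a finitely additive charge). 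This last point is what makes the elementary case clean; for non-elementary integrands it is exactly the property that can fail and must be recovered separately in the later subsections.

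To verify the regular weak* Fubini property (3.2), I would apply the evaluation $m\mapsto m(f)$ to the formula above. Since $(v^\top m)(f)=v^\top m(f)$ with $m(f)=(m^i(f))_{i=1,\dots,d}\in\mathbb R^d$, this gives
$$\bigl(\varphi\bullet S_t^{\tau-}\bigr)(f)=\sum_{\ell=0}^{L-1}I_{A_\ell}\,m_\ell(f)^{\top}\bigl(S_{t_{\ell+1}\wedge t}^{\tau-}-S_{t_\ell\wedge t}^{\tau-}\bigr),$$
which is precisely the explicit expression for the very simple integral $\varphi(f)\cdot S^{\tau-}$, because $\varphi(f)=\sum_\ell m_\ell(f)I_{D_\ell}$ lies in $\mathcal H$. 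The identity holds pathwise, so the two processes are indistinguishable. As $\varphi(f)\in\mathcal H$ is bounded and $V_{\tau-}\in L^2$, the remark at the end of Section 2.1 gives $\varphi(f)\in L^2(\mu_{\tau,V})$, whence $\varphi(f)\cdot S^{\tau-}=(\varphi(f)\cdot S)^{\tau-}$ is a well-defined semimartingale in $\mathcal R^2$ by (2.3)--(2.4); thus $\varphi\bullet S^{\tau-}$ is a weak* semimartingale in $\mathcal R^2$. The inequality (3.3) then follows at once by applying (2.3) to $H=\varphi(f)$. Representation-independence of the construction is automatic: two representations of the same $\varphi$ yield the same $\varphi(f)$ for all $f\in C(\mathcal K)$, hence the same coordinate processes, and since $C(\mathcal K)$ separates the measures in $\mathbb M$ the $\mathbb M$-valued process is uniquely determined.

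For the general weak* Fubini property (3.4), the identical computation carries over verbatim with $f$ replaced by any bounded $\mathcal B(\mathcal K)$-measurable $g$: the number $m_\ell^i(g)=\int g\,\mathrm d m_\ell^i$ remains well defined as the integral of a bounded Borel function against a finite measure, so $\varphi(g)=\sum_\ell m_\ell(g)I_{D_\ell}$ is again a nonrandom-coefficient process in $\mathcal H$, and evaluating the explicit formula at $g$ yields $(\varphi\bullet S_t^{\tau-})(g)=\varphi(g)\cdot S_t^{\tau-}$. The final extension of (3.3) to such $g$ is obtained by applying (2.3) to $H=\varphi(g)\in\mathcal H$.

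I do not expect a real obstacle at this elementary stage: because $\varphi\in\mathcal E$ produces an explicit finite sum, both weak* Fubini identities reduce to interchanging $\int_{\mathcal K}$ with a finite sum, which is trivial, and all analytic content is imported from the scalar control estimate (2.3). The two points deserving attention are (i) checking that the integral genuinely lands in $\mathbb M$ rather than in the larger space of finite charges --- immediate here, but the heart of the later general construction --- and (ii) observing that the passage from continuous $f$ to bounded Borel $g$ is free for simple integrands precisely because $g$ enters only through the finitely many constants $m_\ell(g)$; this is exactly the step that will require Baire-class-one approximation and the density and separability properties of $C(\mathcal K)$ once $\varphi$ is no longer simple.
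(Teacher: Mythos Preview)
Your proof is correct and follows essentially the same route as the paper: the identical explicit definition of $\varphi\bullet S^{\tau-}$, the direct evaluation at $f$ to recover $\varphi(f)\cdot S^{\tau-}$, and the appeal to the scalar control estimate for (3.3) and the semimartingale/$\mathcal R^2$ properties, with the Borel $g$ case handled by the same finite-sum computation. Your additional remarks on representation-independence and on why the values lie in $\mathbb M$ rather than merely in charges are sound and make explicit points the paper leaves implicit.
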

\begin{proof} For $\varphi=\sum_{\ell=0}^{L-1} m_{\ell} I_{D_{\ell}}=\sum_{\ell=0}^{L-1} m_{\ell} I_{A_{\ell} \times\left(t_{\ell}, t_{\ell+1}\right]}$, it is clear that we set
$$
\varphi \bullet S_{t}^{\tau-}:=\sum_{\ell=0}^{L-1} I_{A_{\ell}}\left(S_{t_{\ell+1} \wedge t}^{\tau-}-S_{t_{\ell} \wedge t}^{\tau-}\right)^{\top} m_{\ell} \quad \text { for } t \geq 0
$$
which clearly takes values in $\mathbb{M}$. Then we get for each $f \in C(\mathcal{K})$ that
\begin{equation*}
\left(\varphi \bullet S_{t}^{\tau-}\right)(f)=\sum_{\ell=0}^{L-1} I_{A_{\ell}} m_{\ell}(f)^{\top}\left(S_{t_{\ell+1} \wedge t}^{\tau-}-S_{t_{\ell} \wedge t}^{\tau-}\right)=\int_{0}^{t} H_{r} \mathrm{~d} S_{r}^{\tau-} \tag{3.5}
\end{equation*}
with the process
\begin{align*}
H_{r}(\omega):=\sum_{\ell=0}^{L-1} m_{\ell}(f) I_{D_{\ell}}(\omega, r) & =\sum_{\ell=0}^{L-1} \int_{\mathcal{K}} I_{A_{\ell} \times\left(t_{\ell}, t_{\ell+1}\right]}(\omega, r) f(z) m_{\ell}(\mathrm{d} z)  \tag{3.6}\\
& =\left(\int_{\mathcal{K}} f(z) \varphi_{r}(\mathrm{~d} z)\right)(\omega)=\varphi_{r}(f)(\omega) .
\end{align*}
Combining (3.5) with (3.6) gives (3.2). Moreover, the predictable process $H=\varphi(f)$ is bounded uniformly in $(\omega, r)$ by 
$$C:=\max \left\{\left\|m_{\ell}^{i}\right\|_{\text {var }}\|f\|_{\infty}: i=1, \ldots, d, \ell=0, \ldots, L-1\right\}$$
 and hence in $L^{2}\left(\mu_{\tau, V}\right)$ as $V_{\tau-} \in L^{2}$. Therefore $\left(\varphi \bullet S^{\tau-}\right)(f)$ is well defined as a semimartingale and in $\mathcal{R}^{2}$ by (2.4), which also directly gives (3.3). Finally, all operations in the above argument only involve finite sums, and so we can replace $f \in C(\mathcal{K})$ by any bounded $\mathcal{B}(\mathcal{K})$-measurable $g$ and obtain the same results. This completes the proof.\end{proof}
\begin{remark} 1) For elementary integrands $\varphi \in \mathcal{E}$, the regular weak* Fubini property (3.2) and the general one in (3.4) are equally easy to obtain. But for an extension to a larger class of integrands, it is simpler to start with (3.2) because the class $C(\mathcal{K})$ of test functions is separable, whereas the class of bounded $\mathcal{B}(\mathcal{K})$-measurable functions is not.\\
2) The above construction and setup as well as the subsequent extension to more general $\varphi$ are strongly inspired by the work of Björk et al. [8]. However, there is an important difference. In [8], the goal is to develop stochastic integration with respect to an integrator ( $\bar{S}$, say) taking values in $C([0, T])$; the natural integrands ( $\bar{\varphi}$, say) then take values in the dual space $C([0, T])^{*}={\mathbb{M}}([0, T])$, and the resulting integral $\int \bar{\varphi} \mathrm{d} \bar{S}$ has values in $\mathbb{R}$. (In the application in [8], $\bar{S}_{t}$ is the bond price curve at time $t$ of a term structure model, $\bar{\varphi}_{t}$ is a portfolio of bonds held at time $t$, and $\int \bar{\varphi} \mathrm{d} \bar{S}$ describes the cumulative gains/losses from trading over time via the dynamic strategy $\bar{\varphi}=\left(\bar{\varphi}_{t}\right)$.) In contrast, our integrator $S$ is $\mathbb{R}^{d}$-valued, and so using the analogous elementary $\mathbb{M}^{d}$-valued integrands $\varphi$ leads to an integral process $\varphi \bullet S$ which (at least for elementary $\varphi$ ) is again measure-valued (and charge-valued in general). As a consequence, the resulting space of integral processes has a more complicated structure, and in particular completeness, for a natural seminorm, is no longer clear.\end{remark}
\subsection{Approximation of general integrands}
To extend both $\varphi \mapsto \varphi \bullet S$ and the regular weak* Fubini property (3.2) from $\varphi \in \mathcal{E}$ to more general integrands, we use approximations and hence need appropriate seminorms on $\varphi$ and on real-valued weak* semimartingales $N$ in $\mathcal{R}^{2}$ like $\varphi \bullet S^{\tau-}$. We start on the side of the integrands. Throughout this section, we fix an $\mathbb{R}^{d}$-valued semimartingale $S$, a control process $V$ for $S$ and a stopping time $\tau$ with $V_{\tau-} \in L^{2}$, and denote by $\underline{u}=\left(u_{k}\right)_{k \in \mathbb{N}}$\\
a countable dense subset of the unit ball $U_{1}$ of $C(\mathcal{K})$ and by $\underline{\gamma}=\left(\gamma_{k}\right)_{k \in \mathbb{N}}$ a sequence in $(0, \infty)$ with $\sum_{k=1}^{\infty} \gamma_{k}=1$.
\begin{definition} For any $\mathbb{M}^{d}$-valued weak* predictable process $\varphi=\left(\varphi_{t}\right)_{t \geq 0}$, we define
$$
q_{\underline{\gamma}, \underline{u} ; \tau, V}(\varphi):=\left(\sum_{k=1}^{\infty} \gamma_{k}\left\|\varphi\left(u_{k}\right)\right\|_{L^{2}\left(\mu_{\tau, V}\right)}^{2}\right)^{1 / 2}
$$
and call $L^{2}(\underline{\gamma}, \underline{u} ; \tau, V)$ the space of all $\mathbb{M}^{d}$-valued weak* predictable $\varphi$ with $q_{\underline{\gamma}, \underline{u} ; \tau, V}(\varphi)<\infty$. We denote by $\Phi(\tau, V)$ the class of all $\mathbb{M}^{d}$-valued weak* predictable $\varphi$ satisfying
\begin{equation*}
C_{\varphi}:=\sup _{f \in U_{1} \backslash\{0\}} \frac{\|\varphi(f)\|_{L^{2}\left(\mu_{\tau, V}\right)}}{\|f\|_{\infty}}<\infty \tag{3.7}
\end{equation*}
For any $c>0$ and with $\mathbf{1}=(1, \ldots, 1) \in \mathbb{R}^{d}$, we also define the set $\mathcal{U}_{c}^{d} \subseteq \mathbb{M}^{d}$ by
$$
\mathcal{U}_{c}^{d}:=\left\{m \in \mathbb{M}^{d}:\|m\|_{\operatorname{var}} \leq c \mathbf{1}\right\}
$$
\end{definition}
\begin{remark}1) Any $\varphi \in \mathcal{E}$ has values in $\mathcal{U}_{c}^{d}$ with $c \leq \max _{i=1, \ldots, d, \ell=0,1, \ldots, L-1}\left\|m_{\ell}^{i}\right\|_{\text {var }}$. Next, if a weak* predictable $\varphi$ has values in $\bigcup_{c>0} \mathcal{U}_{c}^{d}$, the process $H:=\varphi(f)$ is predictable and $\mathbb{R}^{d}$-valued for any $f \in C(\mathcal{K})$ and satisfies $\left|H_{t}^{i}\right|=\left|\varphi_{t}^{i}(f)\right| \leq\left\|\varphi_{t}^{i}\right\|_{\text {var }}\|f\|_{\infty} \leq c\|f\|_{\infty}$, $i=1, \ldots, d$, for some $c>0$. This yields
\begin{equation*}
\|\varphi(f)\|_{L^{2}\left(\mu_{\tau, V}\right)}=\|H\|_{L^{2}\left(\mu_{\tau, V}\right)} \leq c\|f\|_{\infty} \mu_{\tau, V}(\bar{\Omega})=c\left\|V_{\tau-}\right\|_{L^{2}}\|f\|_{\infty} \tag{3.8}
\end{equation*}
so that $\varphi \in \Phi(\tau, V)$, with $C_{\varphi} \leq c\left\|V_{\tau-}\right\|_{L^{2}}$. Finally, any $\varphi \in \Phi(\tau, V)$ is clearly in $L^{2}(\underline{\gamma}, \underline{u} ; \tau, V)$ with $q_{\underline{\gamma}, \underline{u} ; \tau, V}(\varphi) \leq C_{\varphi}$. So we have
\begin{equation*}
\mathcal{E} \subseteq\left\{\left(\bigcup_{c>0} \mathcal{U}_{c}^{d}\right) \text {-valued weak* predictable } \varphi\right\} \subseteq \Phi(\tau, V) \subseteq L^{2}(\underline{\gamma}, \underline{u} ; \tau, V) \tag{3.9}
\end{equation*}
2) From (3.7), we see that an $\mathbb{M}^{d}$-valued weak $^{*}$ predictable process $\varphi$ is in $\Phi(\tau, V)$ if and only if $\varphi$ as a (clearly linear) mapping from $C(\mathcal{K})$ to the space of $\mathbb{R}^{d}$-valued predictable processes is continuous for $\|\cdot\|_{L^{2}\left(\mu_{\tau, V}\right)}$, meaning that we have $\|\varphi(f)\|_{L^{2}\left(\mu_{\tau, V}\right)} \leq c\|f\|_{\infty}$ for all $f \in C(\mathcal{K})$, for some $c>0$ which can depend on $\varphi$. Clearly, we can take $c=C_{\varphi}$.\\
3)  The space $L^{2}(\underline{\gamma}, \underline{u} ; \tau, V)$ as well as $q_{\underline{\gamma}, \underline{u} ; \tau, V}$ obviously depend on $\underline{\gamma}$ and $\underline{u}$ as well as on $V$ and $\tau$. But the above relations in 1) and 2) hold for any fixed choice of $\underline{\gamma}, \underline{u}$ and $V, \tau$.
\end{remark}
\begin{lemma} Under our standing assumptions in this subsection, we have:\\
1) $q_{\underline{\gamma}, \underline{u} ; \tau, V}$ is a seminorm on $L^{2}(\underline{\gamma}, \underline{u} ; \tau, V)$.\\
2)  For each $c>0$, the functional $q_{\underline{\gamma}, \underline{u} ; \tau, V}$ is sequentially continuous with respect to the weak* topology on the space of all weak predictable $\varphi$ with values in $\mathcal{U}_{c}^{d}$ : If a sequence $\left(\varphi^{n}\right)_{n \in \mathbb{N}}$ of such processes converges $\mu_{\tau, V}$-a.e. for the weak $k^{*}$ topology to some $\mathbb{M}^{d}$-valued weak* predictable $\varphi$, then also $\varphi$ takes values in $\mathcal{U}_{c}^{d}$ and $\lim _{n \rightarrow \infty} q_{\underline{\gamma}, \underline{u} ; \tau, V}\left(\varphi^{n}-\varphi\right)=0$.
\end{lemma}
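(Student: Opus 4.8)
The plan is to establish the two parts by different means: Part~1 is a soft Hilbert-space statement, while Part~2 rests on two nested dominated-convergence arguments.

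For Part~1, the key observation is that for each fixed $f\in C(\mathcal K)$ the evaluation $\varphi\mapsto\varphi(f)$ is linear, because $m\mapsto m(f)$ is linear on $\mathbb M^d$; hence $(\varphi+\psi)(u_k)=\varphi(u_k)+\psi(u_k)$ and $(\lambda\varphi)(u_k)=\lambda\,\varphi(u_k)$ for every $k$. I would then introduce the weighted Hilbert space $\mathcal H_{\underline\gamma}$ of sequences $(x_k)_{k\in\mathbb N}$ with $x_k\in L^2(\mu_{\tau,V};\mathbb R^d)$ and $\sum_{k}\gamma_k\|x_k\|_{L^2}^2<\infty$, with inner product $\langle(x_k),(y_k)\rangle=\sum_k\gamma_k\langle x_k,y_k\rangle_{L^2}$, and note that $\varphi\mapsto J\varphi:=(\varphi(u_k))_{k\in\mathbb N}$ is linear with $q_{\underline\gamma,\underline u;\tau,V}(\varphi)=\|J\varphi\|_{\mathcal H_{\underline\gamma}}$. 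Since a genuine norm precomposed with a linear map is always a seminorm, homogeneity and the triangle inequality follow immediately (and, as a byproduct, $L^2(\underline\gamma,\underline u;\tau,V)=J^{-1}(\mathcal H_{\underline\gamma})$ is seen to be a vector space). One can avoid the abstract wrapping by arguing directly: the triangle inequality in $L^2(\mu_{\tau,V})$ applied for each fixed $k$, followed by Minkowski's inequality in the weighted sequence space $\ell^2(\underline\gamma)$, gives the same conclusion.

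For Part~2, I would first pin down the value constraint on the limit. On the full-measure set where $\varphi^n_t(\omega)\to\varphi_t(\omega)$ weak* in $\mathbb M^d$, fix any $f\in U_1$ and any component $i$; then $\varphi^{n,i}_t(\omega)(f)\le\|\varphi^{n,i}_t(\omega)\|_{\mathrm{var}}\|f\|_\infty\le c$, and letting $n\to\infty$ yields $\varphi^{i}_t(\omega)(f)\le c$. Taking the supremum over $f\in U_1$ gives $\|\varphi^{i}_t(\omega)\|_{\mathrm{var}}\le c$, i.e.\ $\varphi$ is $\mathcal U_c^d$-valued $\mu_{\tau,V}$-a.e.; this is just weak* lower semicontinuity of the variation norm. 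In particular $\varphi^n-\varphi$ is weak* predictable with values in $\mathcal U_{2c}^d$, so by (3.8)--(3.9) all the seminorm expressions below are finite.

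It remains to prove $q_{\underline\gamma,\underline u;\tau,V}(\varphi^n-\varphi)\to0$, i.e.\ $\sum_k\gamma_k\,a^n_k\to0$ with $a^n_k:=\|(\varphi^n-\varphi)(u_k)\|_{L^2(\mu_{\tau,V})}^2$. Since $\varphi^n-\varphi$ has values in $\mathcal U_{2c}^d$ and $\|u_k\|_\infty\le1$, the integrand obeys $|(\varphi^n-\varphi)(u_k)|^2\le 4dc^2$ pointwise, uniformly in $n$ and $k$; and $\mu_{\tau,V}$ is a finite measure because $V_{\tau-}\in L^2$ (Section~2.1). For each fixed $k$, dominated convergence on $(\bar\Omega,\mu_{\tau,V})$ --- using the pointwise convergence $(\varphi^n-\varphi)(u_k)\to0$ $\mu_{\tau,V}$-a.e.\ together with the bound $4dc^2$ --- gives $a^n_k\to0$ as $n\to\infty$. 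To pass this to the sum, observe that $\gamma_k a^n_k\le\gamma_k\cdot4dc^2\mu_{\tau,V}(\bar\Omega)=:\gamma_k B$ with $\sum_k\gamma_k B=B<\infty$, so a second dominated-convergence argument --- now on $\mathbb N$ equipped with the finite measure $\sum_k\gamma_k\delta_k$ --- lets me interchange $\lim_n$ with $\sum_k$ and conclude $\sum_k\gamma_k a^n_k\to0$. The main obstacle is precisely this interchange of the limit in $n$ with the infinite summation over $k$: termwise convergence alone is insufficient, and what makes it work is the uniform domination $\gamma_k a^n_k\le\gamma_k B$ by a summable, $n$-independent sequence, which in turn hinges on the uniform variation bound built into $\mathcal U_c^d$ and on the finiteness of $\mu_{\tau,V}$.
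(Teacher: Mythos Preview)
Your proof is correct and follows essentially the same approach as the paper's: Part~1 via the triangle inequality in $L^2(\mu_{\tau,V})$ followed by Minkowski in $\ell^2(\underline\gamma)$ (your Hilbert-space packaging is just a cleaner wrapper for this), and Part~2 via weak* lower semicontinuity of the variation norm plus dominated convergence for each $u_k$. The only cosmetic difference is in the final interchange step: the paper uses an explicit $\varepsilon$-tail estimate (choose $K_0$ large, then $n$ large for $k\le K_0$), whereas you invoke a second dominated-convergence theorem on $(\mathbb N,\sum_k\gamma_k\delta_k)$ --- these are the same argument in different clothes.
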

\begin{proof} 1) Clearly $q_{\underline{\gamma}, \underline{u} ; \tau, V}$ is positively homogeneous. For the triangle inequality, we write

$$
q_{\underline{\gamma}, \underline{u} ; \tau, V}(\varphi)=\left(\sum_{k=1}^{\infty} \gamma_{k}\left\|\varphi\left(u_{k}\right)\right\|_{L^{2}\left(\mu_{\tau, V}\right)}^{2}\right)^{1 / 2}=\left(\sum_{k=1}^{\infty} \gamma_{k} x_{k}^{2}\right)^{1 / 2}=\|\underline{x}\|_{\ell^{2}(\gamma)}
$$
with $x_{k}:=\left\|\varphi\left(u_{k}\right)\right\|_{L^{2}\left(\mu_{\tau, V}\right)}$, and analogously for $\varphi^{\prime}$ with $\underline{x}^{\prime}$. So $q_{\underline{\gamma}, \underline{u} ; \tau, V}(\varphi)$ is just the $\ell^{2}$-norm of the sequence $\underline{x}=\left(x_{k}\right)_{k \in \mathbb{N}}$ for the probability measure on $\mathbb{N}$ having weights $\gamma_{k}$ for $k \in \mathbb{N}$. But $y_{k}:=\left\|\varphi\left(u_{k}\right)+\varphi^{\prime}\left(u_{k}\right)\right\|_{L^{2}\left(\mu_{\tau, V}\right)} \leq x_{k}+x_{k}^{\prime}$ by the triangle inequality in $L^{2}\left(\mu_{\tau, V}\right)$, and so
$$
\begin{aligned}
q_{\underline{\gamma}, \underline{u} ; \tau, V}\left(\varphi+\varphi^{\prime}\right) & =\|\underline{y}\|_{\ell^{2}(\gamma)} \leq\left(\sum_{k=1}^{\infty} \gamma_{k}\left(x_{k}+x_{k}^{\prime}\right)^{2}\right)^{1 / 2}=\left\|\underline{x}+\underline{x}^{\prime}\right\|_{\ell^{2}(\gamma)} \\
& \leq\|\underline{x}\|_{\ell^{2}(\gamma)}+\left\|\underline{x}^{\prime}\right\|_{\ell^{2}(\gamma)}=q_{\underline{\gamma}, \underline{u} ; \tau, V}(\varphi)+q_{\underline{\gamma}, \underline{u} ; \tau, V}\left(\varphi^{\prime}\right) .
\end{aligned}
$$
This proves the triangle inequality for $q_{\underline{\gamma}, \underline{\underline{u}} ; \tau, V}$ on $L^{2}(\underline{\gamma}, \underline{u} ; \tau, V)$.\\
2) Take a sequence $\left(\varphi^{n}\right)_{n \in \mathbb{N}}$ of $\mathbb{M}^{d}$-valued weak${ }^{*}$ predictable processes with $\left\|\varphi^{n}\right\|_{\text {var }} \leq c \mathbf{1}$ for all $n$ and suppose that $\left(\varphi^{n}\right)_{n \in \mathbb{N}}$ converges $\mu_{\tau, V}$-a.e. to some $\varphi$ for the weak* topology. Then $\varphi^{n}(f) \rightarrow \varphi(f) \mu_{\tau, V}$-a.e. for every $f \in C(\mathcal{K})$, and thus for $i=1, \ldots, d$,
\begin{equation*}
\left|\varphi_{t}^{i}(f)\right|=\lim _{n \rightarrow \infty}\left|\varphi_{t}^{n ; i}(f)\right| \leq \limsup _{n \rightarrow \infty}\left\|\varphi_{t}^{n ; i}\right\|_{\text {var }}\|f\|_{\infty} \leq c\|f\|_{\infty} \quad \mu_{\tau, V} \text {-a.e. } \tag{3.10}
\end{equation*}
Hence $\varphi$ also has values in $\mathcal{U}_{c}^{d}$ and is thus in $L^{2}(\underline{\gamma}, \underline{u} ; \tau, V)$ by (3.9). Replacing $\varphi^{n}$ by $\varphi^{n}-\varphi$ and $c$ by $2 c$, we can and do assume without loss of generality that $\varphi \equiv 0$. Then $\lim _{n \rightarrow \infty} \varphi^{n}\left(u_{k}\right)=0 \mu_{\tau, V}$-a.e. for all $k \in \mathbb{N}$, and using $\left\|\varphi^{n}\right\|_{\text {var }} \leq 2 c \mathbf{1}$ for all $n \in \mathbb{N}$ yields as in (3.10) that $\sup _{n \in \mathbb{N}}\left|\varphi^{n}\left(u_{k}\right)\right| \leq \sup _{n \in \mathbb{N}}\left|\left\|\varphi^{n}\right\|_{\operatorname{var}}\right|\left\|u_{k}\right\|_{\infty} \leq 2 c|\mathbf{1}| \in L^{2}\left(\mu_{\tau, V}\right)$ due to $V_{\tau-} \in L^{2}$. So dominated convergence gives $\lim _{n \rightarrow \infty} \varphi^{n}\left(u_{k}\right)=0$ in $L^{2}\left(\mu_{\tau, V}\right)$, for each $k \in \mathbb{N}$. Finally, (3.8) for $\varphi^{n}$ and $u_{k} \in U_{1}$ with $c$ replaced by $2 c$ yields $\left\|\varphi^{n}\left(u_{k}\right)\right\|_{L^{2}\left(\mu_{\tau, V}\right)}^{2} \leq 4 c^{2} E\left[V_{\tau-}^{2}\right]$. Now\\
first take $K_{0}$ large enough to obtain $4 c^{2} E\left[V_{\tau-}^{2}\right] \sum_{k=K_{0}+1}^{\infty} \gamma_{k} \leq \varepsilon^{2}$ and then use $\varphi^{n}\left(u_{k}\right) \rightarrow 0$ in $L^{2}\left(\mu_{\tau, V}\right)$ as $n \rightarrow \infty$, for each $k$, to take $n$ large enough to get $\gamma_{k}\left\|\varphi^{n}\left(u_{k}\right)\right\|_{L^{2}\left(\mu_{\tau, V}\right)}^{2} \leq \varepsilon^{2} / K_{0}$ for $k=1, \ldots, K_{0}$. Then the definition of $q_{\underline{\gamma}, \underline{u} ; \tau, V}$ and the above estimates imply that

$$
\left(q_{\underline{\gamma}, \underline{u} ; \tau, V}\left(\varphi^{n}\right)\right)^{2}=\sum_{k=1}^{\infty} \gamma_{k}\left\|\varphi^{n}\left(u_{k}\right)\right\|_{L^{2}\left(\mu_{\tau, V}\right)}^{2} \leq 2 \varepsilon^{2}
$$
This proves the assertion.\end{proof}

The next approximation result is crucial for our construction of stochastic integrals of measure-valued processes.

\begin{lemma} Under our standing assumptions in this subsection, the set $\mathcal{E}$ is dense in $L^{2}(\underline{\gamma}, \underline{u} ; \tau, V)$ with respect to the seminorm $q_{\underline{\gamma}, \underline{u} ; \tau, V}$.\end{lemma}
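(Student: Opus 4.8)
The plan is to reach $\mathcal{E}$ from an arbitrary $\varphi\in L^{2}(\underline{\gamma},\underline{u};\tau,V)$ in three successive reductions: first \emph{truncate} so that the variation norm becomes bounded, then \emph{discretise the measure values}, and finally \emph{discretise the predictable time–space structure}. Each reduction will be controlled in the seminorm $q_{\underline{\gamma},\underline{u};\tau,V}$, and the pieces will be glued together by its triangle inequality.

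First I would truncate. For $c>0$ define the radial truncation $T_{c}(\varphi)$ componentwise by $T_{c}(\varphi^{i}):=\bigl(\min(c,\|\varphi^{i}\|_{\mathrm{var}})/\|\varphi^{i}\|_{\mathrm{var}}\bigr)\varphi^{i}$, and $:=0$ where $\|\varphi^{i}\|_{\mathrm{var}}=0$. Since $\|\varphi^{i}\|_{\mathrm{var}}$ is predictable (Section 2.3) and each $\varphi^{i}(f)$ is predictable, $T_{c}(\varphi)$ is weak* predictable with values in $\mathcal{U}_{c}^{d}$, hence lies in $\Phi(\tau,V)\subseteq L^{2}(\underline{\gamma},\underline{u};\tau,V)$ by (3.9). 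As $c\to\infty$ one has $T_{c}(\varphi_{t})(u_{k})(\omega)\to\varphi_{t}(u_{k})(\omega)$ with $|T_{c}(\varphi)(u_{k})|\le|\varphi(u_{k})|$, so two applications of dominated convergence — first in $L^{2}(\mu_{\tau,V})$ for fixed $k$, then in the $\gamma_{k}$-weighted series, dominated by $4\,q_{\underline{\gamma},\underline{u};\tau,V}(\varphi)^{2}$ — give $q_{\underline{\gamma},\underline{u};\tau,V}(T_{c}(\varphi)-\varphi)\to 0$. Thus it suffices to approximate an arbitrary $\mathcal{U}_{c}^{d}$-valued weak* predictable $\varphi$.

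Next I would discretise the measure values. Because $C(\mathcal{K})$ is separable, $\mathcal{U}_{c}^{d}$ is weak*-compact (Banach–Alaoglu) and metrisable; fix a compatible metric $\rho$. Viewing $\varphi$ as a $\mathcal{P}$–$\mathcal{M}^{d}$-measurable map into the compact metric space $\mathcal{U}_{c}^{d}$, for each $n$ partition $\mathcal{U}_{c}^{d}$ into finitely many Borel sets $B_{1}^{n},\dots,B_{M_{n}}^{n}$ of $\rho$-diameter at most $1/n$, pick $m_{j}^{n}\in B_{j}^{n}$, and set $\psi^{n}:=\sum_{j}m_{j}^{n}I_{\{\varphi\in B_{j}^{n}\}}$. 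Then $\{\varphi\in B_{j}^{n}\}\in\mathcal{P}$, so $\psi^{n}$ is weak* predictable with values in $\mathcal{U}_{c}^{d}$, and $\psi^{n}\to\varphi$ everywhere for the weak* topology. By the sequential weak*-continuity of $q_{\underline{\gamma},\underline{u};\tau,V}$ on $\mathcal{U}_{c}^{d}$-valued processes proved in the previous lemma, $q_{\underline{\gamma},\underline{u};\tau,V}(\psi^{n}-\varphi)\to 0$. It therefore remains to approximate each simple process $\psi^{n}=\sum_{j}m_{j}^{n}I_{G_{j}^{n}}$, with $G_{j}^{n}\in\mathcal{P}$ and $m_{j}^{n}\in\mathbb{M}^{d}$, by elements of $\mathcal{E}$.

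Finally, by the triangle inequality it suffices to handle a single term $m\,I_{G}$ with $m\in\mathbb{M}^{d}$, $G\in\mathcal{P}$. Taking $\xi=\sum_{\ell}I_{A_{\ell}\times(t_{\ell},t_{\ell+1}]}$ with $A_{\ell}\in\mathcal{F}_{t_{\ell}}$ and setting $\varphi':=m\,\xi=\sum_{\ell}m\,I_{A_{\ell}\times(t_{\ell},t_{\ell+1}]}\in\mathcal{E}$, one gets $\varphi'(u_{k})=m(u_{k})\,\xi$, and since $\|u_{k}\|_{\infty}\le 1$ and $\sum_{k}\gamma_{k}=1$,
\[
q_{\underline{\gamma},\underline{u};\tau,V}(m\,I_{G}-\varphi')^{2}=\|I_{G}-\xi\|_{L^{2}(\mu_{\tau,V})}^{2}\sum_{k=1}^{\infty}\gamma_{k}\,|m(u_{k})|^{2}\le\bigl|\|m\|_{\mathrm{var}}\bigr|^{2}\,\|I_{G}-\xi\|_{L^{2}(\mu_{\tau,V})}^{2}.
\]
So everything reduces to approximating $I_{G}$ in $L^{2}(\mu_{\tau,V})$ by a finite sum $\xi$ of indicators of predictable rectangles $A_{\ell}\times(t_{\ell},t_{\ell+1}]$ with $A_{\ell}\in\mathcal{F}_{t_{\ell}}$; this is the standard fact that, for the finite measure $\mu_{\tau,V}$ (finite because $V_{\tau-}\in L^{2}$), the algebra generated by such rectangles is dense in $\mathcal{P}$, so $\mu_{\tau,V}(G\,\triangle\,\{\xi=1\})$ can be made arbitrarily small. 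The hard part is really this last measure-theoretic step, namely guaranteeing that the approximants genuinely lie in $\mathcal{E}$ with the prescribed $\mathcal{F}_{t_{\ell}}$-measurable bases and half-open time intervals; the conceptual work is carried by the weak*-compactness argument of the second reduction, while truncation and the rectangle density are routine, and combining the three reductions with the triangle inequality for $q_{\underline{\gamma},\underline{u};\tau,V}$ then yields the claim.
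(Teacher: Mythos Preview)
Your proof is correct and follows essentially the same three-step architecture as the paper: truncate to $\mathcal{U}_{c}^{d}$-valued processes, discretise the measure values using weak*-compactness and metrisability of $\mathcal{U}_{c}^{d}$ together with Lemma~3.5,\,2), and finally replace predictable sets by finite unions of predictable rectangles via the finiteness of $\mu_{\tau,V}$. The only differences are cosmetic: you use a radial truncation instead of the paper's hard cutoff $\varphi I_{\{\|\varphi\|_{\mathrm{var}}\le c\mathbf{1}\}}$, a Borel partition of $\mathcal{U}_{c}^{d}$ instead of a nearest-point projection onto a finite set, and in the last step you compute $q_{\underline{\gamma},\underline{u};\tau,V}(mI_{G}-m\xi)$ directly rather than invoking Lemma~3.5,\,2) a second time.
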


\begin{proof} This is similar to the very concisely written result in Bj\"ork et al. [8, Lemma 2.3], but for completeness and readability, we give the proof in detail. First of all, $\mathcal{E} \subseteq L^{2}(\underline{\gamma}, \underline{u} ; \tau, V)$ as seen in (3.9). If we consider very simple $\mathcal{P}$-measurable $\varphi$ as in (3.1) where we only have sets $D_{\ell} \in \mathcal{P}$ instead of predictable rectangles $D_{\ell}=A_{\ell} \times\left(t_{\ell}, t_{\ell+1}\right]$, these $\varphi$ are still in $L^{2}(\underline{\gamma}, \underline{u} ; \tau, V)$ if they are valued in $\mathcal{U}_{c}^{d}$ for some $c>0$, again by (3.9).\\
To prove denseness, we start with $\varphi \in L^{2}(\underline{\gamma}, \underline{u} ; \tau, V)$. Then $\underline{\gamma} \subseteq(0, \infty)$ and
$$
q_{\underline{\gamma}, \underline{u} ; \tau, V}(\varphi)=\left(\sum_{k=1}^{\infty} \gamma_{k}\left\|\varphi\left(u_{k}\right)\right\|_{L^{2}\left(\mu_{\tau, V}\right)}^{2}\right)^{1 / 2}<\infty
$$
implies that $\varphi\left(u_{k}\right) \in L^{2}\left(\mu_{\tau, V}\right)$ for all $k \in \mathbb{N}$. For an arbitrary $c>0$, the process
$$
\varphi^{(c)}:=\varphi I_{\left\{\|\varphi\|_{\mathrm{var}} \leq c \mathbf{1}\right\}} \quad \text { (defined coordinatewise) }
$$
is weak$^{*}$ predictable like $\varphi$ because $\|\varphi\|_{\text {var }}$ is predictable, and $\varphi-\varphi^{(c)}=\varphi I_{\left\{\|\varphi\|_{\text {var }}>c 1\right\}}$ (again coordinatewise). This implies that $\lim _{c \rightarrow \infty}\left(\varphi-\varphi^{(c)}\right)(f)=0 \mu_{\tau, V}$-a.e. for every $f \in C(\mathcal{K})$ because $\varphi$ takes values in $\mathbb{M}^{d}$, and $\left|\left(\varphi-\varphi^{(c)}\right)\left(u_{k}\right)\right| \leq\left|\varphi\left(u_{k}\right)\right| \in L^{2}\left(\mu_{\tau, V}\right)$ for each $k$. Dominated convergence hence gives $\left(\varphi-\varphi^{(c)}\right)\left(u_{k}\right) \rightarrow 0$ in $L^{2}\left(\mu_{\tau, V}\right)$ as $c \rightarrow \infty$, for each $k$. Next, $q_{\underline{\gamma}, \underline{u} ; \tau, V}(\varphi)<\infty$ allows us to again use dominated convergence, for the measure on $\mathbb{N}$ with weights $\left(\gamma_{k}\right)_{k \in \mathbb{N}}$, and obtain $q_{\underline{\gamma}, \underline{u} ; \tau, V}\left(\varphi-\varphi^{(c)}\right) \rightarrow 0$ as $c \rightarrow \infty$. Each $\varphi^{(c)}$\\
takes values in $\mathcal{U}_{c}^{d}=\left\{m \in \mathbb{M}^{d}:\|m\|_{\text {var }} \leq c \mathbf{1}\right\}$, and so it is enough to show that $\mathcal{E}$ is dense with respect to $q_{\underline{\gamma}, \underline{\underline{u}} ; \tau, V}$ in the set of all weak* predictable processes $\varphi$ valued in $\mathcal{U}_{c}^{d}$.

Now by the Banach-Alaoglu theorem, each ball $\mathcal{U}_{c}^{d}$ is compact in the weak* topology on $\mathbb{M}^{d}=\left((C(\mathcal{K}))^{d}\right)^{*}$, and metrisable because $C(\mathcal{K})$ is separable; see Section 2.3. Hence $\mathcal{U}_{c}^{d}$ is separable for the weak* topology and we can choose a countable weak* dense subset $\left(m_{j}\right)_{j \in \mathbb{N}}$ in $\mathcal{U}_{c}^{d}$. Define $\mathcal{M}_{n}:=\left\{m_{1}, m_{2}, \ldots, m_{n}\right\}$ for all $n \in \mathbb{N}$ and note that because $\underline{u}=\left(u_{k}\right)_{k \in \mathbb{N}}$ is dense in $U_{1}$, the metric $\delta\left(m, m^{\prime}\right):=\sum_{k=1}^{\infty} 2^{-k}\left|m\left(u_{k}\right)-m^{\prime}\left(u_{k}\right)\right|$ on $\mathcal{U}_{c}^{d}$ induces the weak* topology on any $\mathcal{U}_{c}^{d}$. If $\varphi$ is a weak* predictable process valued in $\mathcal{U}_{c}^{d}, \delta(\varphi, m)$ is predictable for every fixed $m \in \mathcal{U}_{c}^{d}$, and so is then the process $\delta\left(\varphi, \mathcal{M}_{n}\right)=\min _{i=1, \ldots, n} \delta\left(\varphi, m_{i}\right)$. As $\left(m_{j}\right)_{j \in \mathbb{N}}$ is weak $^{*}$ dense in $\mathcal{U}_{c}^{d}$, we know that $\delta\left(\varphi, \mathcal{M}_{n}\right) \searrow 0 \mu_{\tau, V}$-a.e. as $n \rightarrow \infty$, and the pairwise disjoint sets
$$
C_{j}^{n}:=\left\{\delta\left(\varphi, \mathcal{M}_{n}\right)=\delta\left(\varphi, m_{j}\right) \text { and } \delta\left(\varphi, m_{i}\right)>\delta\left(\varphi, m_{j}\right) \text { for } i<j\right\} \subseteq \bar{\Omega}
$$
where the distance of $\varphi$ to $\mathcal{M}_{n}$ is realized for the first time in $m_{j} \in \mathcal{M}_{n}$ are in $\mathcal{P}$. The process $\Psi^{n}:=\sum_{j=1}^{n} m_{j} I_{C_{j}^{n}}$ is thus a very simple $\mathcal{P}$-measurable $\mathcal{U}_{c}^{d}$-valued process, and
$$
\delta\left(\varphi, \Psi^{n}\right)=\sum_{j=1}^{n} I_{C_{j}^{n}} \delta\left(\varphi, m_{j}\right)=\delta\left(\varphi, \mathcal{M}_{n}\right) \longrightarrow 0 \quad \mu_{\tau, V} \text {-a.e. }
$$
shows that $\Psi^{n} \rightarrow \varphi$ in the weak* topology which is metrised by $\delta$. This proves that because the weak* predictable process $\varphi$ takes values in $\mathcal{U}_{c}^{d}$, it is actually strongly $\mathcal{P}$-measurable in the sense (usual for Banach-valued random processes) that it is an a.e. pointwise limit of elementary $\mathcal{P}$-measurable processes. As a consequence, Lemma 3.5, 2) implies that $\left(\Psi^{n}\right)_{n \in \mathbb{N}}$ converges to $\varphi$ with respect to $q_{\underline{\gamma}, \underline{u} ; \tau, V}$.

The above sequence $\left(\Psi^{n}\right)$ is not yet in $\mathcal{E}$ because the sets $C_{j}^{n}$ are in $\mathcal{P}$, but not necessarily predictable rectangles of the form $A_{s} \times(s, t]$ with $s \leq t$ and $A_{s} \in \mathcal{F}_{s}$. But each $\Psi^{n}$ has the form $\Psi^{n}=\sum_{j=1}^{J(n)} m_{j} I_{C_{j}^{n}}$ with $m_{j} \in \mathcal{U}_{c}^{d}$ and $C_{j}^{n} \in \mathcal{P}$ pairwise disjoint. This is a finite linear combination, and so it is enough to consider $\Psi=m I_{C}$ with $m \in \mathcal{U}_{c}^{d}, C \in \mathcal{P}$ and show that this can be approximated with respect to $q_{\underline{\gamma}, \underline{u} ; \tau, V}$ by a sequence $\left(\varphi^{i}\right)_{i \in \mathbb{N}}$ in $\mathcal{E}$. For this, note that the predictable $\sigma$-field $\mathcal{P}$ is generated by the sets of the form $D_{\ell}=A_{\ell} \times\left(t_{\ell}, t_{\ell+1}\right]$ with $A_{\ell} \in \mathcal{F}_{t_{\ell}}$, so that the real-valued predictable processes of the form $H=\sum_{\ell=0}^{L-1} c_{\ell} I_{A_{\ell} \times\left(t_{\ell}, t_{\ell+1}\right]}$\\
with $L \in \mathbb{N}$ and $c_{\ell} \in \mathbb{R}$ are dense in $L^{2}\left(\mu_{\tau, V}\right)$. (This uses that the measure $\mu_{\tau, V}$ is finite as $V_{\tau-} \in L^{2}$.) So we can choose a sequence $\left(H^{i}\right)_{i \in \mathbb{N}}$ of such processes with $H^{i} \rightarrow I_{C}$ in $L^{2}\left(\mu_{\tau, V}\right)$, and by passing to a subsequence, still called $\left(H^{i}\right)$, we can achieve that $H^{i} \rightarrow I_{C}$ $\mu_{\tau, V^{-}}$-a.e. Moreover, as the limit $I_{C}$ is bounded by 1 , we can even modify the $H^{i}$ so that they are bounded by $1+\varepsilon$, uniformly in $\omega, t, i$. If we now set $\varphi^{i}:=m H^{i}$, each $\varphi^{i}$ takes values in $\mathcal{U}_{c(1+\varepsilon)}^{d}$ because $m \in \mathcal{U}_{c}^{d}$, and recalling that $\Psi=m I_{C}$, we get $\mu_{\tau, V^{-}}$a.e. that
$$
\left|\varphi^{i}\left(u_{k}\right)-\Psi\left(u_{k}\right)\right|=\left|m\left(u_{k}\right)\right|\left|H^{i}-I_{C}\right| \leq\left|\|m\|_{\operatorname{var}}\right|\left\|u_{k}\right\|_{\infty}\left|H^{i}-I_{C}\right| \longrightarrow 0
$$
simultaneously for all and uniformly in $k \in \mathbb{N}$; note that $\left\|u_{k}\right\|_{\infty} \leq 1$. Therefore
$$
\delta\left(\varphi^{i}, m I_{C}\right)=\sum_{k=1}^{\infty} 2^{-k}\left|\varphi^{i}\left(u_{k}\right)-\Psi\left(u_{k}\right)\right| \leq\left|\|m\|_{\mathrm{var}}\right|\left|H^{i}-I_{C}\right| \longrightarrow 0 \quad \mu_{\tau, V^{-}} \text {a.e., }
$$
and we have already seen above that $\delta$ metrises the weak* topology on $\mathcal{U}_{c(1+\varepsilon)}^{d}$. So we get $\varphi^{i} \rightarrow m I_{C} \mu_{\tau, V^{-}}$-a.e. for the weak* topology as $i \rightarrow \infty$, and Lemma 3.5,2) then yields that $q_{\underline{\gamma}, \underline{u} ; \tau, V}\left(\varphi^{i}-m I_{C}\right) \rightarrow 0$ as $i \rightarrow \infty$.\\
Combining all the above results implies the assertion and completes the proof.\end{proof}

\noindent By Remark 3.4,2), a weak* predictable process $\varphi$ is in $\Phi(\tau, V) \subseteq L^{2}(\underline{\gamma}, \underline{u} ; \tau, V)$ if and only if it is continuous as a mapping from $C(\mathcal{K})$ to $L^{2}\left(\mu_{\tau, V}\right)$. The next result sharpens the approximation obtained in Lemma 3.6 - it shows that for $\varphi \in \Phi(\tau, V)$, one can find an approximating sequence $\left(\varphi^{n}\right)_{n \in \mathbb{N}}$ with the same type of continuity as $\varphi$, uniformly in $n$.

\begin{corollary} Under our standing assumptions in this subsection, for every $\varphi \in \Phi(\tau, V)$, there exists a sequence $\left(\varphi^{n}\right)_{n \in \mathbb{N}} \subseteq \mathcal{E}$ with $\lim _{n \rightarrow \infty} q_{\underline{\gamma}, \underline{u} ; \tau, V}\left(\varphi^{n}-\varphi\right)=0$ and
\begin{equation*}
\sup _{n \in \mathbb{N}}\left\|\varphi^{n}(f)\right\|_{L^{2}\left(\mu_{\tau, V}\right)} \leq c\|f\|_{\infty} \quad \text { for all } f \in C(\mathcal{K}) \tag{3.11}
\end{equation*}
with some constant $c \in(0, \infty)$.\end{corollary}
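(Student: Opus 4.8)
The plan is to replace the ``nearest-point'' approximation used in Lemma 3.6 by a conditional-expectation approximation, which automatically respects the continuity bound (3.11) because conditional expectation is an $L^{2}$-contraction. Concretely, I would approximate $\varphi$ by its conditional expectations $\varphi^{n}:=E_{\mu_{\tau,V}}[\varphi\mid\mathcal{G}_{n}]$ with respect to a suitable increasing sequence of finite sub-$\sigma$-fields $\mathcal{G}_{n}\subseteq\mathcal{P}$ whose atoms are predictable rectangles. The whole point is that conditioning never enlarges the $L^{2}\!\left(\mu_{\tau,V}\right)$-norm, so the uniform operator bound comes for free with constant $c=C_{\varphi}$.

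First I would construct the filtration. Each $\varphi(u_{k})$, $k\in\mathbb{N}$, lies in $L^{2}\!\left(\mu_{\tau,V}\right)$ (since $\varphi\in\Phi(\tau,V)$ and $u_{k}\in U_{1}$, so $\|\varphi(u_{k})\|_{L^{2}(\mu_{\tau,V})}\le C_{\varphi}$) and, $\mu_{\tau,V}$ being finite, is approximable in $L^{2}\!\left(\mu_{\tau,V}\right)$ by very simple predictable processes $\sum_{\ell}c_{\ell}I_{A_{\ell}\times(t_{\ell},t_{\ell+1}]}$, exactly as in the proof of Lemma 3.6. Collecting the countably many predictable rectangles occurring in these approximations over all $k$, I would build an increasing sequence $(\mathcal{G}_{n})$ of finite sub-$\sigma$-fields of $\mathcal{P}$ of \emph{product type}, i.e. generated by finitely many rectangles $A\times(s_{i},s_{i+1}]$ coming from a time grid $\{s_{i}\}$ together with adapted finite partitions of $\Omega$ into $\mathcal{F}_{s_{i}}$-sets. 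The product structure guarantees that the atoms of each $\mathcal{G}_{n}$ are themselves predictable rectangles; refining both the grid and the spatial partitions keeps $(\mathcal{G}_{n})$ increasing, and arranging that $\mathcal{G}_{\infty}:=\bigvee_{n}\mathcal{G}_{n}$ contains all the collected rectangles forces each $\varphi(u_{k})$ to be $\mathcal{G}_{\infty}$-measurable up to $\mu_{\tau,V}$-nullsets.

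Next I would define $\varphi^{n}$ atomwise: on each atom $B$ of $\mathcal{G}_{n}$ with $\mu_{\tau,V}(B)>0$, set $\varphi^{n}\equiv\bar m_{B}$, where $\bar m_{B}\in\mathbb{M}^{d}$ is the measure representing the linear functional $f\mapsto\mu_{\tau,V}(B)^{-1}\int_{B}\varphi(f)\,\mathrm{d}\mu_{\tau,V}$ on $C(\mathcal{K})$. By Cauchy--Schwarz and $\varphi\in\Phi(\tau,V)$, this functional is bounded by $C_{\varphi}\,\mu_{\tau,V}(B)^{-1/2}\|f\|_{\infty}$, so the Riesz representation theorem on the compact metric space $\mathcal{K}$ produces a genuine finite signed measure $\bar m_{B}$ in each coordinate; hence $\varphi^{n}=\sum_{B}\bar m_{B}I_{B}$ is a finite sum of constant-measure rectangle terms and thus lies in $\mathcal{E}$, with $\varphi^{n}(f)=E_{\mu_{\tau,V}}[\varphi(f)\mid\mathcal{G}_{n}]$ for every $f$. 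The conditional-Jensen/contraction property on $L^{2}\!\left(\mu_{\tau,V}\right)$ then gives, coordinatewise, $\|\varphi^{n}(f)\|_{L^{2}(\mu_{\tau,V})}=\|E_{\mu_{\tau,V}}[\varphi(f)\mid\mathcal{G}_{n}]\|_{L^{2}(\mu_{\tau,V})}\le\|\varphi(f)\|_{L^{2}(\mu_{\tau,V})}\le C_{\varphi}\|f\|_{\infty}$, which is precisely (3.11) with $c=C_{\varphi}$. For the convergence, Lévy's martingale convergence theorem yields $\varphi^{n}(u_{k})\to\varphi(u_{k})$ in $L^{2}\!\left(\mu_{\tau,V}\right)$ for each $k$; and since each summand of $q_{\underline{\gamma},\underline{u};\tau,V}(\varphi^{n}-\varphi)^{2}=\sum_{k}\gamma_{k}\|\varphi^{n}(u_{k})-\varphi(u_{k})\|_{L^{2}(\mu_{\tau,V})}^{2}$ is dominated by $4C_{\varphi}^{2}\gamma_{k}$ (again using the contraction), dominated convergence in $k$ gives $q_{\underline{\gamma},\underline{u};\tau,V}(\varphi^{n}-\varphi)\to0$.

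The main obstacle I anticipate is the first step: producing an increasing sequence of finite sub-$\sigma$-fields of $\mathcal{P}$ whose atoms are \emph{genuine} predictable rectangles --- so that the conditional expectations land in $\mathcal{E}$ rather than merely among very simple $\mathcal{P}$-measurable processes --- while being rich enough that $\mathcal{G}_{\infty}$ makes all the $\varphi(u_{k})$ measurable. Naively taking $\mathcal{G}_{n}=\sigma(R_{1},\dots,R_{n})$ fails, since complements and intersections of rectangles are not rectangles; this is exactly why the product-type construction (time grid $\times$ adapted spatial partitions) is needed, with some bookkeeping to preserve monotonicity under simultaneous refinement of grid and partitions. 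Everything else --- the Riesz identification of $\bar m_{B}$, the contraction estimate, and the passage from pointwise-in-$k$ convergence to $q_{\underline{\gamma},\underline{u};\tau,V}$-convergence --- is routine once this filtration is in place.
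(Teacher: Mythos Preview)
Your approach is correct and genuinely different from the paper's. The paper does not introduce any new construction for Corollary 3.7; it simply revisits the three-step approximation already built in the proof of Lemma 3.6 (truncate $\varphi$ to $\varphi^{(c)}$ with values in $\mathcal{U}^d_c$, replace by the nearest-point process $\Psi^n$, and then approximate each $m_j I_{C_j^n}$ by a rectangle process), and estimates $|\varphi^n(f)|$ by a telescoping sum. The first two differences are controlled because $\varphi^n,\Psi^n,\varphi^{(c)}$ all take values in a fixed ball $\mathcal{U}^d_b$, and the remaining two by $|\varphi(f)|$ itself; this yields the constant $c=2b\|V_{\tau-}\|_{L^2}+2C_\varphi$. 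Your conditional-expectation construction is conceptually cleaner: once the product-type filtration is in place, the $L^2$-contraction of conditional expectation gives $(3.11)$ for free with the sharp constant $c=C_\varphi$, and the Riesz representation step (using only that the averaged functional $f\mapsto \mu_{\tau,V}(B)^{-1}\!\int_B\varphi(f)\,\mathrm d\mu_{\tau,V}$ is bounded on $C(\mathcal K)$, which follows from $\varphi\in\Phi(\tau,V)$ via Cauchy--Schwarz) guarantees $\bar m_B\in\mathbb M^d$ without any pointwise control on $\|\varphi\|_{\mathrm{var}}$. The price you pay is the bookkeeping you already flagged: building an \emph{increasing} sequence of finite sub-$\sigma$-fields whose atoms are predictable rectangles (so that $\varphi^n\in\mathcal E$) and whose union makes each $\varphi(u_k)$ measurable. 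This is routine---refine a growing dyadic-type time grid and, at each grid point $s$, the finite $\mathcal F_s$-partition generated by the finitely many $A_j$ with $s_j\le s$ coming from the first $n$ approximating rectangles---but it is extra work the paper avoids entirely by recycling Lemma 3.6. One small wrinkle worth noting: the ``tail'' atom $\Omega\times(s_{\max(n)},\infty)$ is not of rectangle form; setting $\varphi^n\equiv 0$ there keeps $\varphi^n\in\mathcal E$ and only perturbs $\varphi^n(u_k)$ on a set of vanishing $\mu_{\tau,V}$-mass, which does not disturb either the $L^2$-contraction bound or the $q$-convergence.
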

\begin{proof} We go back to the proof of Lemma 3.6 and examine in more detail how the approximating sequence is constructed there. Start with $\varphi \in \Phi(\tau, V)$, fix $b>1$, take $c:=b-1$ and set $\varphi^{(c)}:=\varphi I_{\left\{\|\varphi\|_{\mathrm{var}} \leq c 1\right\}}$. Approximate, with respect to $q_{\underline{\gamma}, \underline{u} ; \tau, V}$, first $\varphi^{(c)}$ by a sequence\\
$\left(\Psi^{n}\right)_{n \in \mathbb{N}}$ of the form $\Psi^{n}=\sum_{j=1}^{J(n)} m_{j} I_{C_{j}^{n}}$, and then each $\Psi^{n}$ by some $\varphi^{n} \in \mathcal{E}$. Note (from the proof of Lemma 3.6) that $\Psi^{n}$ takes values in $\mathcal{U}_{c}^{d}$ and $\varphi^{n}$ hence in $\mathcal{U}_{c(1+\varepsilon)}^{d} \subseteq \mathcal{U}_{b}^{d}$ for $\varepsilon \leq \frac{1}{b-1}$ because then $c(1+\varepsilon) \leq b$. This gives $\lim _{n \rightarrow \infty} q_{\underline{\gamma}, \underline{u} ; \tau, V}\left(\varphi^{n}-\varphi\right)=0$, and for any $f \in C(\mathcal{K})$,
$$
\left|\varphi^{n}(f)\right| \leq\left|\varphi^{n}(f)-\Psi^{n}(f)\right|+\left|\Psi^{n}(f)-\varphi^{(c)}(f)\right|+\left|\varphi^{(c)}(f)-\varphi(f)\right|+|\varphi(f)| .
$$
Because $\varphi^{n}$ has values in $\mathcal{U}_{b}^{d}$ and $\Psi^{n}$ in $\mathcal{U}_{c}^{d} \subseteq \mathcal{U}_{b}^{d}$, we get $\left|\varphi^{n}(f)-\Psi^{n}(f)\right| \leq|b \mathbf{1}|\|f\|_{\infty}$, and in the same way, $\left|\Psi^{n}(f)-\varphi^{(c)}(f)\right| \leq|c \mathbf{1}|\|f\|_{\infty} \leq|b \mathbf{1}|\|f\|_{\infty}$. Moreover, by the definition of $\varphi^{(c)}$, we have $\left|\varphi^{(c)}(f)-\varphi(f)\right| \leq|\varphi(f)|$, and so we obtain from (3.8) and $\varphi \in \Phi(\tau, V)$ that
$$
\left\|\varphi^{n}(f)\right\|_{L^{2}\left(\mu_{\tau, V}\right)} \leq 2 b\left\|V_{\tau-}\right\|_{L^{2}}\|f\|_{\infty}+2\|\varphi(f)\|_{L^{2}\left(\mu_{\tau, V}\right)} \leq\left(2 b\left\|V_{\tau-}\right\|_{L^{2}}+2 C_{\varphi}\right)\|f\|_{\infty} .
$$
As the constant on the right-hand side does not depend on $n$, this completes the proof.\end{proof}
\subsection{Limits of measure-valued semimartingales}
In the last section, we have constructed a stochastic integral process $\varphi \bullet S^{\tau-}$ for measure-valued integrands $\varphi \in \mathcal{E}$. This yields a measure-valued weak* semimartingale in $\mathcal{R}^{2}$, and we have also seen that any $\varphi \in L^{2}(\underline{\gamma}, \underline{u} ; \tau, V)$ can be approximated by a sequence $\left(\varphi^{n}\right)_{n \in \mathbb{N}} \subseteq \mathcal{E}$. It seems natural to try and define $\varphi \bullet S^{\tau-}$ for more general $\varphi$ than in $\mathcal{E}$ as a limit, in a suitable sense, of $\left(\varphi^{n} \bullet S^{\tau-}\right)_{n \in \mathbb{N}}$, and so we first need a concept of convergence.

As in Section 3.2, throughout this subsection, we fix an $\mathbb{R}^{d}$-valued semimartingale $S$, a control process $V$ for $S$ and a stopping time $\tau$ with $V_{\tau-} \in L^{2}$, and we denote by $\underline{u}=\left(u_{k}\right)_{k \in \mathbb{N}}$ a countable dense subset of the unit ball $U_{1}$ of $C(\mathcal{K})$ and by $\underline{\gamma}=\left(\gamma_{k}\right)_{k \in \mathbb{N}}$ a sequence in $(0, \infty)$ with $\sum_{k=1}^{\infty} \gamma_{k}=1$.
\begin{definition}Definition 3.8. For a weak* process $N=\left(N_{t}\right)_{t \geq 0}$ in $\mathcal{R}^{2}$, we define
$$
r_{\underline{\gamma}, \underline{u}}(N):=\left(\sum_{k=1}^{\infty} \gamma_{k}\left\|N\left(u_{k}\right)\right\|_{\mathcal{R}^{2}}^{2}\right)^{1 / 2}
$$
and call $\mathcal{N}^{2}(\underline{\gamma}, \underline{u})$ the space of all weak* processes $N$ in $\mathcal{R}^{2}$ with $r_{\underline{\gamma}, \underline{u}}(N)<\infty$.\end{definition}
\begin{lemma} Under our standing assumptions in this subsection, we have:\\
1) $r_{\underline{\gamma}, \underline{u}}$ is a seminorm on $\mathcal{N}^{2}(\underline{\gamma}, \underline{u})$.\\
2) For any $\varphi \in \mathcal{E}$, we have the inequality
\begin{equation*}
r_{\underline{\gamma}, \underline{u}}\left(\varphi \bullet S^{\tau-}\right) \leq q_{\underline{\gamma}, \underline{u} ; \tau, V}(\varphi) . \tag{3.12}
\end{equation*}
\end{lemma}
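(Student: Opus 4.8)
The plan is to treat the two parts separately, with part 1 closely paralleling the argument for $q_{\underline{\gamma}, \underline{u} ; \tau, V}$ in Lemma 3.5, 1), and part 2 being a direct consequence of the elementary estimate (3.3).

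For part 1, I would first observe that, exactly as in Lemma 3.5, 1), the functional $r_{\underline{\gamma}, \underline{u}}$ is nothing but the $\ell^{2}(\gamma)$-norm of the sequence $\underline{x}=\left(x_{k}\right)_{k \in \mathbb{N}}$ with $x_{k}:=\left\|N\left(u_{k}\right)\right\|_{\mathcal{R}^{2}}$, where $\ell^{2}(\gamma)$ refers to the probability measure on $\mathbb{N}$ with weights $\gamma_{k}$. Positive homogeneity is immediate: addition and scalar multiplication of weak* processes are defined coordinatewise through their action on test functions, so $(cN)\left(u_{k}\right)=c\,N\left(u_{k}\right)$ and hence $\left\|(cN)\left(u_{k}\right)\right\|_{\mathcal{R}^{2}}=|c|\,\left\|N\left(u_{k}\right)\right\|_{\mathcal{R}^{2}}$, giving $r_{\underline{\gamma}, \underline{u}}(cN)=|c|\,r_{\underline{\gamma}, \underline{u}}(N)$. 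For the triangle inequality, I would use that $\mathcal{R}^{2}$ is a genuine normed (indeed Banach) space, so that $(N+N')\left(u_{k}\right)=N\left(u_{k}\right)+N'\left(u_{k}\right)$ together with the triangle inequality in $\mathcal{R}^{2}$ yields $\left\|(N+N')\left(u_{k}\right)\right\|_{\mathcal{R}^{2}} \le x_{k}+x_{k}'$ with $x_{k}':=\left\|N'\left(u_{k}\right)\right\|_{\mathcal{R}^{2}}$; applying the triangle (Minkowski) inequality in $\ell^{2}(\gamma)$ to $\underline{x}+\underline{x}'$ then gives $r_{\underline{\gamma}, \underline{u}}(N+N') \le r_{\underline{\gamma}, \underline{u}}(N)+r_{\underline{\gamma}, \underline{u}}(N')$. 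This is verbatim the computation in Lemma 3.5, 1), only with $\|\cdot\|_{L^{2}\left(\mu_{\tau, V}\right)}$ replaced by $\|\cdot\|_{\mathcal{R}^{2}}$.

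For part 2, the key is that Lemma 3.3 already provides, for $\varphi \in \mathcal{E}$, the bound (3.3), namely $\left\|\left(\varphi \bullet S^{\tau-}\right)(f)\right\|_{\mathcal{R}^{2}} \le \|\varphi(f)\|_{L^{2}\left(\mu_{\tau, V}\right)}$ for every $f \in C(\mathcal{K})$. I would simply specialise this to the test functions $f=u_{k}$, square, multiply by $\gamma_{k} \ge 0$, and sum over $k \in \mathbb{N}$, obtaining
\[
\big(r_{\underline{\gamma}, \underline{u}}\left(\varphi \bullet S^{\tau-}\right)\big)^{2}=\sum_{k=1}^{\infty} \gamma_{k}\,\left\|\left(\varphi \bullet S^{\tau-}\right)\left(u_{k}\right)\right\|_{\mathcal{R}^{2}}^{2} \le \sum_{k=1}^{\infty} \gamma_{k}\,\left\|\varphi\left(u_{k}\right)\right\|_{L^{2}\left(\mu_{\tau, V}\right)}^{2}=\big(q_{\underline{\gamma}, \underline{u} ; \tau, V}(\varphi)\big)^{2} .
\]
Taking square roots yields (3.12). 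As a byproduct, since $\varphi \in \mathcal{E} \subseteq L^{2}(\underline{\gamma}, \underline{u} ; \tau, V)$ has $q_{\underline{\gamma}, \underline{u} ; \tau, V}(\varphi)<\infty$ by (3.9), the estimate also shows $r_{\underline{\gamma}, \underline{u}}\left(\varphi \bullet S^{\tau-}\right)<\infty$, so that $\varphi \bullet S^{\tau-}$ indeed lies in $\mathcal{N}^{2}(\underline{\gamma}, \underline{u})$ and $r_{\underline{\gamma}, \underline{u}}$ is meaningfully applied to it.

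There is essentially no serious obstacle here: both parts are routine once (3.3) and the Banach-space structure of $\mathcal{R}^{2}$ are in hand. If anything, the only point requiring a moment of care is making explicit that the vector-space operations on weak* processes are defined through their test-function evaluations $N \mapsto N(f)$, so that linearity transfers to the coordinates $N\left(u_{k}\right)$; this is precisely what lets the $\ell^{2}(\gamma)$ bookkeeping from Lemma 3.5, 1) carry over unchanged.
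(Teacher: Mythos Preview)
Your proposal is correct and matches the paper's own proof essentially verbatim: part 1 is handled by the same $\ell^{2}(\gamma)$ bookkeeping as in Lemma~3.5,~1) with $\|\cdot\|_{L^{2}(\mu_{\tau,V})}$ replaced by $\|\cdot\|_{\mathcal{R}^{2}}$, and part 2 follows by specialising (3.3) to $f=u_{k}$ and summing. The only slip is a citation: the inequality (3.3) comes from Lemma~3.1, not Lemma~3.3.
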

\begin{proof} 1) This is completely analogous to the proof of 1) in Lemma 3.5, just replacing $x_{k}=\left\|\varphi\left(u_{k}\right)\right\|_{L^{2}\left(\mu_{\tau, V}\right)}$ by $x_{k}:=\left\|N\left(u_{k}\right)\right\|_{\mathcal{R}^{2}}$.\\
2) The inequality (3.3) in Lemma 3.1 gives $\left\|\left(\varphi \bullet S^{\tau-}\right)\left(u_{k}\right)\right\|_{\mathcal{R}^{2}} \leq\left\|\varphi\left(u_{k}\right)\right\|_{L^{2}\left(\mu_{\tau, V}\right)}$ for each $\varphi \in \mathcal{E}$ and all $k \in \mathbb{N}$. So (3.12) follows from the definitions of $r_{\underline{\gamma}, \underline{u}}$ and $q_{\underline{\gamma}, \underline{u} ; \tau, V}$.
\end{proof}

The next result is crucial for taking limits in $\mathcal{N}^{2}(\underline{\gamma}, \underline{u})$.
\begin{lemma} Under our standing assumptions in this subsection, suppose that the sequence $\left(N^{n}\right)_{n \in \mathbb{N}} \subseteq \mathcal{N}^{2}(\underline{\gamma}, \underline{u})$ is a Cauchy sequence for $r_{\underline{\gamma}, \underline{u}}$. If it also satisfies
\begin{equation*}
\sup _{n \in \mathbb{N}}\left\|N^{n}(f)\right\|_{\mathcal{R}^{2}} \leq c\|f\|_{\infty} \quad \text { for all } f \in C(\mathcal{K}) \tag{3.13}
\end{equation*}
with a constant $c \in(0, \infty)$ (that can depend on the sequence), then $\left(N^{n}\right)_{n \in \mathbb{N}}$ converges with respect to $r_{\underline{\gamma}, \underline{u}}$ to some $N^{\infty} \in \mathcal{N}^{2}(\underline{\gamma}, \underline{u})$ which also satisfies
\begin{equation*}
\left\|N^{\infty}(f)\right\|_{\mathcal{R}^{2}} \leq c\|f\|_{\infty} \quad \text { for all } f \in C(\mathcal{K}) \tag{3.14}
\end{equation*}
\end{lemma}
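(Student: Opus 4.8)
The plan is to build the limit from completeness of the Banach space $\mathcal{R}^{2}$ and then to use the uniform bound (3.13) to control it on all of $C(\mathcal{K})$. \emph{Limits on the dense family.} Since each weight $\gamma_{k}$ is strictly positive, the identity $r_{\underline{\gamma},\underline{u}}(N^{n}-N^{m})^{2}=\sum_{k=1}^{\infty}\gamma_{k}\|N^{n}(u_{k})-N^{m}(u_{k})\|_{\mathcal{R}^{2}}^{2}$ shows that an $r_{\underline{\gamma},\underline{u}}$-Cauchy sequence is, for each fixed $k$, a Cauchy sequence $(N^{n}(u_{k}))_{n}$ in $\mathcal{R}^{2}$. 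As $\mathcal{R}^{2}$ is complete, I would set $Y^{k}:=\lim_{n}N^{n}(u_{k})$ in $\mathcal{R}^{2}$.

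\emph{Extension to all test functions via (3.13).} Here the hypothesis enters decisively. Because $\underline{u}$ is dense in $U_{1}$, every $f\in C(\mathcal{K})\setminus\{0\}$ satisfies $\|f-\|f\|_{\infty}u_{k}\|_{\infty}<\varepsilon$ for a suitable $k$, and each $N^{n}_{t}(\omega)$ is linear in its argument. Splitting $N^{n}(f)-N^{m}(f)$ through $\|f\|_{\infty}u_{k}$ and applying (3.13) to the two end terms and the Cauchy property of $(N^{n}(u_{k}))_{n}$ to the middle term, I would obtain
$$\|N^{n}(f)-N^{m}(f)\|_{\mathcal{R}^{2}}\le 2c\,\|f-\|f\|_{\infty}u_{k}\|_{\infty}+\|f\|_{\infty}\,\|N^{n}(u_{k})-N^{m}(u_{k})\|_{\mathcal{R}^{2}},$$
so $(N^{n}(f))_{n}$ is Cauchy in $\mathcal{R}^{2}$ for every $f$, and I define $N^{\infty}(f):=\lim_{n}N^{n}(f)$. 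Linearity of $f\mapsto N^{\infty}(f)$ is inherited from that of the $N^{n}$ together with continuity of the vector operations in $\mathcal{R}^{2}$, and passing to the limit in (3.13) gives $\|N^{\infty}(f)\|_{\mathcal{R}^{2}}=\lim_{n}\|N^{n}(f)\|_{\mathcal{R}^{2}}\le c\|f\|_{\infty}$, which is (3.14).

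\emph{Membership and convergence.} Since $\|u_{k}\|_{\infty}\le 1$, (3.14) yields $r_{\underline{\gamma},\underline{u}}(N^{\infty})^{2}=\sum_{k}\gamma_{k}\|N^{\infty}(u_{k})\|_{\mathcal{R}^{2}}^{2}\le c^{2}\sum_{k}\gamma_{k}=c^{2}<\infty$, so $N^{\infty}\in\mathcal{N}^{2}(\underline{\gamma},\underline{u})$. For the convergence $r_{\underline{\gamma},\underline{u}}(N^{n}-N^{\infty})\to0$, each summand $\gamma_{k}\|N^{n}(u_{k})-N^{\infty}(u_{k})\|_{\mathcal{R}^{2}}^{2}$ tends to $0$ as $n\to\infty$ and is dominated by the summable sequence $4c^{2}\gamma_{k}$ (again via $\|u_{k}\|_{\infty}\le1$ and (3.13)/(3.14)); dominated convergence for the probability measure $(\gamma_{k})$ on $\mathbb{N}$ then gives the claim.

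\emph{The main obstacle.} The delicate point is not the limit passage but exhibiting $N^{\infty}$ as a genuine weak* process, i.e.\ realising the family $\{N^{\infty}(f):f\in C(\mathcal{K})\}$ through pointwise linear functionals $N^{\infty}_{t}(\omega)$ with $N^{\infty}(f)=(N^{\infty}_{t}(f))_{t}$ for every $f$. The natural route is to fix RCLL representatives of the $N^{\infty}(u_{k})$, to build the functional first on the countable $\mathbb{Q}$-linear span of $\underline{u}$ — where $\mathbb{Q}$-linearity holds pointwise off a single evanescent set, only countably many indistinguishability relations being involved — and then to extend to all of $C(\mathcal{K})$ so that the extension still reproduces $\lim_{n}N^{n}(f)$ and hence (3.14). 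It is exactly the uniform bound (3.13) that is indispensable here: without it one controls only the countable family $\underline{u}$ and can neither define $N^{\infty}$ on all of $C(\mathcal{K})$ nor secure (3.14). I expect the construction and the continuity control of this pointwise extension — equivalently, its independence of the chosen representatives — to be the step demanding the most care.
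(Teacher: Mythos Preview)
Your argument is correct and in fact slightly more streamlined than the paper's. The paper first constructs $N^{\infty}$ only on $\underline{u}$ (as the $\mathcal{R}^{2}$-limits of $N^{n}(u_{k})$), then \emph{extends} to general $f\in C(\mathcal{K})$ by setting $N^{\infty}(f):=\lim_{\ell}N^{\infty}(u'_{\ell})$ for any $(u'_{\ell})\subseteq\underline{u}$ with $u'_{\ell}\to f$, and must then check well-definedness (independence of the approximating sequence) via (3.13). You bypass this two-step construction by proving directly that $(N^{n}(f))_{n}$ is Cauchy in $\mathcal{R}^{2}$ for every $f$; this yields $N^{\infty}(f)$, its linearity, and (3.14) in one stroke. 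For the convergence $r_{\underline{\gamma},\underline{u}}(N^{n}-N^{\infty})\to0$ you use dominated convergence over $(\gamma_{k})$ with the majorant $4c^{2}\gamma_{k}$, whereas the paper argues via finite partial sums and the Cauchy property; both are valid, yours is shorter.

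Regarding your ``main obstacle'' --- realising $N^{\infty}$ through \emph{pointwise} linear functionals $N^{\infty}_{t}(\omega)$ --- you are right that this is delicate, and it is worth noting that the paper itself does not carry out this step explicitly: it simply declares ``$N^{\infty}$ is \ldots\ a weak* process in $\mathcal{R}^{2}$, both by construction''. In practice the paper treats $N^{\infty}$ as a linear continuous map $C(\mathcal{K})\to\mathcal{R}^{2}$ (cf.\ Remark~3.11,\,1)), which is exactly what both constructions produce. Your sketch of how one would upgrade this to genuine pointwise functionals (fix RCLL versions on the countable $\underline{u}$, enforce $\mathbb{Q}$-linearity off an evanescent set, extend) is the right idea and actually goes beyond what the paper writes out; but you should not regard it as a gap in \emph{your} proof relative to the paper's, since the paper handles this point no more carefully.
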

\begin{proof} From $\underline{\gamma} \subseteq(0, \infty)$ and the definition of $r_{\underline{\gamma}, \underline{u}}$, we see that the Cauchy property of $\left(N^{n}\right)_{n \in \mathbb{N}}$ for $r_{\underline{\gamma}, \underline{u}}$ implies for each $k \in \mathbb{N}$ that $\left(N^{n}\left(u_{k}\right)\right)_{n \in \mathbb{N}}$ is a Cauchy sequence in $\mathcal{R}^{2}$ and therefore has a limit $N^{\infty, k}$ in $\mathcal{R}^{2}$. We define $N^{\infty}$ on $\underline{u} \subseteq C(\mathcal{K})$ by
$$
N_{t}^{\infty}\left(u_{k}\right):=N_{t}^{\infty, k} \quad \text { for all } t \geq 0 \text { and } k \in \mathbb{N}
$$
and get that $N^{\infty}\left(u_{k}\right)=N^{\infty, k}$ is in $\mathcal{R}^{2}$ for each $k \in \mathbb{N}$. We now claim that the definition of $N^{\infty}$ can be extended from $\underline{u}$ to all of $C(\mathcal{K})$ in a linear way. To see this, take any $f \in U_{1}$ (which is enough by using linearity) and set
\begin{equation*}
N^{\infty}(f):=\lim _{\ell \rightarrow \infty} N^{\infty}\left(u_{\ell}^{\prime}\right) \quad \text { in } \mathcal{R}^{2} \tag{3.15}
\end{equation*}
for any sequence $\left(u_{\ell}^{\prime}\right)_{\ell \in \mathbb{N}} \subseteq \underline{u}$ which converges to $f$ in $C(\mathcal{K})$. Such a sequence exists because $\underline{u}$ is dense in $U_{1}$, and so it remains to show that the limit in (3.15) exists and does not depend on the approximating sequence $\left(u_{\ell}^{\prime}\right)_{\ell \in \mathbb{N}}$.\\
Because $u_{\ell}^{\prime} \rightarrow f$ in $C(\mathcal{K}),\left(u_{\ell}^{\prime}\right)_{\ell \in \mathbb{N}}$ is Cauchy in $C(\mathcal{K})$. Now write
\begin{equation*}
N^{\infty}\left(u_{\ell}^{\prime}\right)-N^{\infty}\left(u_{j}^{\prime}\right)=\left(N^{\infty}\left(u_{\ell}^{\prime}\right)-N^{n}\left(u_{\ell}^{\prime}\right)\right)+\left(N^{n}\left(u_{\ell}^{\prime}\right)-N^{n}\left(u_{j}^{\prime}\right)\right)+\left(N^{n}\left(u_{j}^{\prime}\right)-N^{\infty}\left(u_{j}^{\prime}\right)\right) \tag{3.16}
\end{equation*}
and note that the first and third differences converge to 0 in $\mathcal{R}^{2}$ as $n \rightarrow \infty$ by the construction of $N^{\infty}$. The second difference can be estimated in $\mathcal{R}^{2}$ by
$$
\sup _{n \in \mathbb{N}}\left\|N^{n}\left(u_{\ell}^{\prime}\right)-N^{n}\left(u_{j}^{\prime}\right)\right\|_{\mathcal{R}^{2}}=\sup _{n \in \mathbb{N}}\left\|N^{n}\left(u_{\ell}^{\prime}-u_{j}^{\prime}\right)\right\|_{\mathcal{R}^{2}} \leq c\left\|u_{\ell}^{\prime}-u_{j}^{\prime}\right\|_{\infty}
$$
thanks to the linearity of each $N^{n}$ and (3.13), and so $\left(N^{\infty}\left(u_{\ell}^{\prime}\right)\right)_{\ell \in \mathbb{N}}$ is a Cauchy sequence in $\mathcal{R}^{2}$ and hence convergent in $\mathcal{R}^{2}$. Thus the limit in (3.15) exists, and we claim that it does not depend on the chosen approximating sequence $\left(u_{\ell}^{\prime}\right)_{\ell \in \mathbb{N}}$ for $f$. Indeed, if $\left(\tilde{u}_{\ell}\right)_{\ell \in \mathbb{N}}$ is another approximating sequence for $f$ with limit $\tilde{N}^{\infty}(f):=\lim _{\ell \rightarrow \infty} N^{\infty}\left(\tilde{u}_{\ell}\right)$, we can write
$$
\tilde{N}^{\infty}(f)-N^{\infty}(f)=\left(\tilde{N}^{\infty}(f)-N^{\infty}\left(\tilde{u}_{\ell}\right)\right)+\left(N^{\infty}\left(\tilde{u}_{\ell}\right)-N^{\infty}\left(u_{\ell}^{\prime}\right)\right)+\left(N^{\infty}\left(u_{\ell}^{\prime}\right)-N^{\infty}(f)\right) .
$$
Then the first and third differences converge to 0 in $\mathcal{R}^{2}$ as $\ell \rightarrow \infty$ by the definitions of $\tilde{N}^{\infty}$ and $N^{\infty}$, respectively, and the second difference converges to 0 in $\mathcal{R}^{2}$ as $\ell \rightarrow \infty$ by the same argument as for the Cauchy property in (3.16); note that $\left\|\tilde{u}_{\ell}-u_{\ell}^{\prime}\right\|_{\infty} \rightarrow 0$ as $\ell \rightarrow \infty$ because both $\left(\tilde{u}_{\ell}\right)_{\ell \in \mathbb{N}}$ and $\left(u_{\ell}^{\prime}\right)_{\ell \in \mathbb{N}}$ converge to $f$ in $C(\mathcal{K})$. So we obtain $\tilde{N}^{\infty}(f)=N^{\infty}(f)$.

By the preceding arguments, $N^{\infty}$ is well defined on all of $C(\mathcal{K})$, and linear and a weak* process in $\mathcal{R}^{2}$, both by construction. To show that the sequence $\left(N^{n}\right)_{n \in \mathbb{N}}$ converges to $N^{\infty}$ for $r_{\underline{\gamma}, \underline{u}}$, we first note that $N^{n}\left(u_{k}\right) \rightarrow N^{\infty}\left(u_{k}\right)$ in $\mathcal{R}^{2}$ as $n \rightarrow \infty$, for each $k \in \mathbb{N}$. For fixed $\varepsilon>0$ and each finite $K$, we thus obtain for large $n$ that
$$
\begin{aligned}
\sum_{k=1}^{K} \gamma_{k}\left\|N^{n}\left(u_{k}\right)-N^{\infty}\left(u_{k}\right)\right\|_{\mathcal{R}^{2}}^{2} & =\lim _{m \rightarrow \infty} \sum_{k=1}^{K} \gamma_{k}\left\|N^{n}\left(u_{k}\right)-N^{m}\left(u_{k}\right)\right\|_{\mathcal{R}^{2}}^{2} \\
& \leq \limsup _{m \rightarrow \infty} \sum_{k=1}^{\infty} \gamma_{k}\left\|N^{n}\left(u_{k}\right)-N^{m}\left(u_{k}\right)\right\|_{\mathcal{R}^{2}}^{2} \\
& =\limsup _{m \rightarrow \infty}\left(r_{\underline{\gamma}, \underline{u}}\left(N^{n}-N^{m}\right)\right)^{2} \leq \varepsilon^{2}
\end{aligned}
$$
independently of $K$, by the Cauchy property of $\left(N^{n}\right)_{n \in \mathbb{N}}$ for $r_{\underline{\gamma}, \underline{u}}$. This shows by letting $K \rightarrow \infty$ that $r_{\underline{\gamma}, \underline{u}}\left(N^{n}-N^{\infty}\right) \leq \varepsilon$ for $n$ large enough. So $\left(N^{n}\right)_{n \in \mathbb{N}}$ converges to $N^{\infty}$ with respect to $r_{\underline{\gamma}, \underline{u}}$ and $N^{\infty}$ lies in $\mathcal{N}^{2}(\underline{\gamma}, \underline{u})$.

Finally, to prove (3.14), we fix $f \in C(\mathcal{K})$ and take a sequence $\left(u_{\ell}^{\prime}\right)_{\ell \in \mathbb{N}} \subseteq \underline{u}$ converging to $f$ in $C(\mathcal{K})$. Then
$$
\begin{aligned}
\left\|N^{\infty}(f)\right\|_{\mathcal{R}^{2}} \leq & \left\|N^{\infty}(f)-N^{\infty}\left(u_{\ell}^{\prime}\right)\right\|_{\mathcal{R}^{2}}+\left\|N^{\infty}\left(u_{\ell}^{\prime}\right)-N^{n}\left(u_{\ell}^{\prime}\right)\right\|_{\mathcal{R}^{2}} \\
& +\sup _{n \in \mathbb{N}}\left\|N^{n}\left(u_{\ell}^{\prime}\right)-N^{n}(f)\right\|_{\mathcal{R}^{2}}+\sup _{n \in \mathbb{N}}\left\|N^{n}(f)\right\|_{\mathcal{R}^{2}}
\end{aligned}
$$
and the fourth summand is at most $c\|f\|_{\infty}$ by (3.13). The other summands all converge to 0 as $\ell \rightarrow \infty$ and $n \rightarrow \infty$ - the first by the definition (3.15) of $N^{\infty}(f)$, the second as in (3.16) by the construction of $N^{\infty}$, and the third by (3.13) because $u_{\ell}^{\prime} \rightarrow f$ in $C(\mathcal{K})$. This shows (3.14) and completes the proof.\end{proof}
\begin{remark} 1) By the construction in the proof of Lemma 3.10, $N^{\infty}: C(\mathcal{K}) \rightarrow \mathcal{R}^{2}$ is continuous, meaning that $f^{n} \rightarrow f$ in $C(\mathcal{K})$ implies that $N^{\infty}\left(f^{n}\right) \rightarrow N^{\infty}(f)$ in $\mathcal{R}^{2}$. However, this is different from continuity of $N_{t}^{\infty}(\omega): C(\mathcal{K}) \rightarrow \mathbb{R}$. In other words, the proof of Lemma 3.10 does not show (and it is not true in general) that the limit process $N^{\infty}$ takes values in $\mathbb{M}$. This is the reason why the stochastic integral $\varphi \bullet S$ we construct below is not always a measure-valued process, but only charge-valued in general. We give a counterexample later in Section 7.\\
2) Because we construct it pointwise on $C(\mathcal{K})$ as a limit in $\mathcal{R}^{2}$, the process $N^{\infty}$ need not be a weak* semimartingale in general even if the sequence $\left(N^{n}\right)_{n \in \mathbb{N}}$ consists of weak* semimartingales. This is because the subspace of semimartingales is not closed in $\mathcal{R}^{2}$. But see ${ }^{* * *}[$ check what can be said here] below for some positive results.\\
3)  As in Remark 3.4, 3), the space $\mathcal{N}^{2}(\underline{\gamma}, \underline{u})$ as well as $r_{\underline{\gamma}, \underline{u}}$ depend on $\underline{\gamma}$ and $\underline{u}$. But the above results again hold for any fixed choice of $\underline{\gamma}$ and $\underline{u}$.
\end{remark}
\section{Two new stochastic Fubini theorems}
In this section, we combine the preceding results to construct a stochastic integral process $\varphi \bullet S$ for fairly general measure-valued integrands $\varphi$, and we use this to prove new stochastic Fubini theorems. The idea to get $\varphi \bullet S$ is simple. We first prelocalise, approximate $\varphi$ on a stochastic interval $\llbracket 0, \tau \llbracket$ by a sequence $\left(\varphi^{n}\right)_{n \in \mathbb{N}}$ of elementary integrands in $\mathcal{E}$ and define $\varphi \bullet S^{\tau-}$ to be the limit of the sequence $\left(\varphi^{n} \bullet S^{\tau-}\right)_{n \in \mathbb{N}}$. While the approximation in $\mathcal{E}$ can be done for any $\varphi \in L^{2}(\underline{\gamma}, \underline{u} ; \tau, V)$, the existence of the limit of $\left(\varphi^{n} \bullet S^{\tau-}\right)_{n \in \mathbb{N}}$ needs the stronger assumption that $\varphi \in \Phi(\tau, V)$. In the end, we paste things together.\\
Let $S=\left(S_{t}\right)_{t \geq 0}$ be a fixed $\mathbb{R}^{d}$-valued semimartingale and recall from Section 2.2 the family $\mathcal{V}(S)$ of control processes $V$ for $S$.
\begin{definition} We denote by $\Phi$ the family of all $\mathbb{M}^{d}$-valued weak* predictable processes $\varphi$ such that for some control process $V \in \mathcal{V}(S)$,
\begin{equation*}
\mathcal{D}\left(\|\varphi\|_{\text {var }} ; V\right)=\int\|\| \varphi_{r} \|\left._{\text {var }}\right|^{2} \mathrm{~d} V_{r} \text { is a finite-valued process (i.e., in } \mathcal{R}^{0} \text { ). } \tag{4.1}
\end{equation*}
\end{definition}
\begin{remark} As seen at the end of Section 2.2, (4.1) is equivalent to saying that the predictable $\mathbb{R}^{d}$-valued process $\|\varphi\|_{\text {var }}=\left(\left\|\varphi_{t}\right\|_{\text {var }}\right)_{t \geq 0}$ is integrable with respect to $S$.

For any $\varphi \in \Phi$, the processes $V$ and $\mathcal{D}\left(\|\varphi\|_{\text {var }} ; V\right)$ are both in $\mathcal{R}^{0}$. So they are prelocally bounded and we can find a localising sequence $\left(\tau_{M}\right)_{M \in \mathbb{N}}$ of stopping times with $V_{\tau_{M}-} \in L^{2}$ and $\mathcal{D}_{\tau_{M}-}\left(\|\varphi\|_{\text {var }} ; V\right) \in L^{2}$ for each $M \in \mathbb{N}$. Thus $\mu_{\tau_{M}, V}$ is a finite measure and we have
$$
E\left[V_{\tau_{M}-} \int_{0}^{\tau_{M}-}\left|\left\|\varphi_{r}\right\|_{\mathrm{var}}\right|^{2} \mathrm{~d} V_{r}\right]<\infty
$$
which means that $\|\varphi\|_{\text {var }} \in L^{2}\left(\mu_{\tau_{M}, V}\right)$. The estimate $\left|\varphi_{r}(f)\right| \leq\|f\|_{\infty}\left|\left\|\varphi_{r}\right\|_{\text {var }}\right|$ for any $f \in C(\mathcal{K})$ then implies that
$$
\sup _{f \in U_{1} \backslash\{0\}} \frac{\|\varphi(f)\|_{L^{2}\left(\mu_{\tau_{M}, V}\right)}}{\|f\|_{\infty}} \leq\|\| \varphi\left\|_{\mathrm{var}}\right\|_{L^{2}\left(\mu_{\tau_{M}, V}\right)}<\infty
$$
so that $\varphi \in \Phi\left(\tau_{M}, V\right)$ for every $M \in \mathbb{N}$; see Definition 3.3. In summary, any $\mu_{\tau_{M}, V}$ is a finite measure and any $\varphi \in \Phi$ is in each $\Phi\left(\tau_{M}, V\right)$, for a suitable localizing sequence $\left(\tau_{M}\right)_{M \in \mathbb{N}}$.\end{remark}

With these preparations, we are now ready for our first main result. Our first new stochastic Fubini theorem is called regular because its test functions $f$ are continuous.
\begin{theorem} Let $S=\left(S_{t}\right)_{t \geq 0}$ be an $\mathbb{R}^{d}$-valued semimartingale. For every measure-valued integrand $\varphi \in \Phi$, there exists a stochastic integral process $\varphi \bullet S=\left(\varphi \bullet S_{t}\right)_{t \geq 0}$ which is a weak* process in $\mathcal{R}^{0}$ (and hence prelocally in $\mathcal{R}^{2}$ ). It is a linear and continuous mapping from $C(\mathcal{K})$ to $\mathcal{R}^{0}$ and satisfies the regular weak* Fubini property that for all $f \in C(\mathcal{K})$,
\begin{equation*}
(\varphi \bullet S)(f)=\varphi(f) \cdot S \quad \text { up to indistinguishability on } \bar{\Omega}=\Omega \times(0, \infty) \text {. } \tag{4.2}
\end{equation*}
Written in integral form, this means that
\begin{equation*}
\int_{\mathcal{K}} f(z)\left(\int_{0}^{t} \varphi_{r} \mathrm{~d} S_{r}\right)(\mathrm{d} z)=\int_{0}^{t}\left(\int_{\mathcal{K}} f(z) \varphi_{r}(\mathrm{~d} z)\right) \mathrm{d} S_{r} \quad \text { for all } t \geq 0, P-\text { a.s. } \tag{4.3}
\end{equation*}
The process $\varphi \bullet S$ is uniquely determined on $C(\mathcal{K})$ by (4.2).\end{theorem}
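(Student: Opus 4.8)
The plan is to construct $\varphi \bullet S$ by prelocalising, building the integral on each stochastic interval $\llbracket 0, \tau_M \llbracket$ as a limit of elementary integrals, and then pasting. By Remark 4.2 I first fix a control process $V \in \mathcal{V}(S)$ and a localising sequence $(\tau_M)_{M \in \mathbb{N}}$ with $V_{\tau_M-} \in L^2$ and $\varphi \in \Phi(\tau_M, V)$ for every $M$, which places us in the standing setup of Sections 3.2 and 3.3. Fixing $M$ and applying Corollary 3.7, I obtain $(\varphi^n)_{n \in \mathbb{N}} \subseteq \mathcal{E}$ with $q_{\underline{\gamma}, \underline{u}; \tau_M, V}(\varphi^n - \varphi) \to 0$ together with a uniform bound $\sup_n \|\varphi^n(f)\|_{L^2(\mu_{\tau_M, V})} \leq c\|f\|_\infty$. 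Setting $N^n := \varphi^n \bullet S^{\tau_M-}$ (well defined by Lemma 3.1) and using that $\mathcal{E}$ is a vector space on which $\bullet$ acts linearly, Lemma 3.9 gives $r_{\underline{\gamma}, \underline{u}}(N^n - N^m) \leq q_{\underline{\gamma}, \underline{u}; \tau_M, V}(\varphi^n - \varphi^m)$, so $(N^n)$ is $r_{\underline{\gamma}, \underline{u}}$-Cauchy; combining (3.3) with the uniform bound shows $\sup_n \|N^n(f)\|_{\mathcal{R}^2} \leq c\|f\|_\infty$, i.e.\ (3.13). Lemma 3.10 then delivers a limit $N^\infty =: \varphi \bullet S^{\tau_M-}$, a genuinely linear weak* process in $\mathcal{R}^2$ that is continuous from $C(\mathcal{K})$ to $\mathcal{R}^2$.

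Next I verify the Fubini identity (4.2) on $\llbracket 0, \tau_M \llbracket$ by passing to the limit in the elementary identity $(\varphi^n \bullet S^{\tau_M-})(f) = \varphi^n(f) \cdot S^{\tau_M-}$ from (3.2). On the left, $N^n(f) \to N^\infty(f)$ in $\mathcal{R}^2$; on the right, $\|(\varphi^n - \varphi)(f)\|_{L^2(\mu_{\tau_M, V})} \to 0$ and hence, by (2.4), $\varphi^n(f) \cdot S^{\tau_M-} \to \varphi(f) \cdot S^{\tau_M-}$ in $\mathcal{R}^2$. For $f$ outside the dense set $\underline{u}$, each of these convergences needs a three-term estimate that replaces $f$ by a nearby $u_\ell' \in \underline{u}$, bounding the remainder through the uniform-in-$n$ operator bounds ((3.13) and the constant from Corollary 3.7) and the $u_\ell'$-term through $r_{\underline{\gamma}, \underline{u}}$- respectively $q_{\underline{\gamma}, \underline{u}; \tau_M, V}$-convergence; one sends $\ell \to \infty$ and then $n \to \infty$. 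This yields $(\varphi \bullet S^{\tau_M-})(f) = \varphi(f) \cdot S^{\tau_M-}$ for all $f \in C(\mathcal{K})$.

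It remains to paste and to record the global properties. The identity just proved shows that $(\varphi \bullet S^{\tau_M-})(f)$ equals the ordinary stochastic integral $\varphi(f) \cdot S^{\tau_M-}$, which depends on none of the auxiliary choices (the control process, the localising sequence, the approximating sequence, $\underline{u}$, $\underline{\gamma}$), so the construction is well defined; consistency of ordinary stochastic integrals under stopping makes the $\varphi \bullet S^{\tau_M-}$ agree on $\llbracket 0, \tau_M \llbracket$ as $M$ grows. Since $\tau_M \nearrow \infty$, pasting produces a weak* process $\varphi \bullet S$ in $\mathcal{R}^0$, prelocally in $\mathcal{R}^2$, linear and continuous from $C(\mathcal{K})$ to $\mathcal{R}^0$ (the $\mathcal{R}^2$-continuity on each piece descends to ucp-continuity), and satisfying (4.2)/(4.3) on all of $\bar\Omega$. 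Uniqueness on $C(\mathcal{K})$ is immediate: any weak* process $N$ with $N(f) = \varphi(f) \cdot S$ for all $f$ must coincide with $\varphi \bullet S$ on $C(\mathcal{K})$.

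I expect the main obstacle to be producing a single charge-valued object rather than an uncoordinated family of one-dimensional integrals. For each individual $f$ the process $\varphi(f) \cdot S$ is defined only up to a $P$-null set, and since $C(\mathcal{K})$ is uncountable these null sets cannot simply be glued so as to make $f \mapsto (\varphi \bullet S_t)(f)(\omega)$ linear for fixed $(t, \omega)$. Overcoming this is precisely the role of the uniform continuity bound: plain density and $q_{\underline{\gamma}, \underline{u}; \tau, V}$-convergence (Lemma 3.6) would not suffice, and it is the uniform-in-$n$ bound from Corollary 3.7 feeding into Lemma 3.10 that forces the limit to be a bona fide linear map on $C(\mathcal{K})$ and lets one pass from the countable dense set $\underline{u}$ to every test function.
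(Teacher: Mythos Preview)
Your proposal is correct and follows essentially the same route as the paper: prelocalise via Remark 4.2, approximate $\varphi$ by elementary integrands using Corollary 3.7, pass to the limit via Lemmas 3.9 and 3.10, verify (4.2) by an $\varepsilon/3$-style argument that interpolates through the dense set $\underline{u}$, and then paste. The only notable difference is cosmetic: the paper unpacks the Fubini verification as a single five-term decomposition and establishes independence of $\underline{u},\underline{\gamma}$ via an explicit interlacing argument, whereas you run two separate three-term estimates and deduce independence of all auxiliary choices directly from the characterisation (4.2)---your version is slightly cleaner on that last point.
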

\begin{proof} Fix a countable dense subset $\underline{u}=\left(u_{k}\right)_{k \in \mathbb{N}}$ of the unit ball $U_{1}$ in $C(\mathcal{K})$ and a sequence $\underline{\gamma}=\left(\gamma_{k}\right)_{k \in \mathbb{N}}$ in $(0, \infty)$ with $\sum_{k=1}^{\infty} \gamma_{k}=1$. Take a control process $V \in \mathcal{V}(S)$ for $S$ and a localising sequence $\left(\tau_{M}\right)_{M \in \mathbb{N}}$ of stopping times with $V_{\tau_{M-}} \in L^{2}$ and $\mathcal{D}_{\tau_{M-}}\left(\|\varphi\|_{\text {var }} ; V\right) \in L^{2}$ for each $M \in \mathbb{N}$. Fix $M \in \mathbb{N}$ and write $\tau:=\tau_{M}$ for brevity. Then $\varphi \in \Phi$ is in $\Phi(\tau, V)$ and there exists by Lemma 3.6 a sequence $\left(\varphi^{n}\right)_{n \in \mathbb{N}} \subseteq \mathcal{E}$ converging to $\varphi$ for $q_{\underline{\gamma}, \underline{u} ; \tau, V}$. For any $n \in \mathbb{N}$, the measure-valued stochastic integral $\varphi^{n} \bullet S^{\tau-}$ is therefore by Lemma 3.1 well defined, a (measure-valued) weak* semimartingale in $\mathcal{R}^{2}$, and in $\mathcal{N}^{2}(\underline{\gamma}, \underline{u})$ because $r_{\underline{\gamma}, \underline{u}}\left(\varphi^{n} \bullet S^{\tau-}\right) \leq q_{\underline{\gamma}, \underline{u} ; \tau, V}\left(\varphi^{n}\right)$ by (3.12) in Lemma 3.9. The same inequality shows that the sequence $\left(N^{n}\right)_{n \in \mathbb{N}}:=\left(\varphi^{n} \bullet S^{\tau-}\right)_{n \in \mathbb{N}}$ is Cauchy for $r_{\underline{\gamma}, \underline{u}}$ because $\left(\varphi^{n}\right)_{n \in \mathbb{N}}$ is Cauchy for $q_{\underline{\gamma}, \underline{u} ; \tau, V}$ as it converges (to $\varphi$ ). Moreover, (3.3) in Lemma 3.1 gives the inequality
\begin{equation*}
\left\|\left(\varphi^{n} \bullet S^{\tau-}\right)(f)\right\|_{\mathcal{R}^{2}} \leq\left\|\varphi^{n}(f)\right\|_{L^{2}\left(\mu_{\tau, V}\right)} \quad \text { for all } f \in C(\mathcal{K}) \tag{4.4}
\end{equation*}
Because $\varphi \in \Phi(\tau, V)$, Corollary 3.7 allows us to choose the sequence $\left(\varphi^{n}\right)_{n \in \mathbb{N}}$ in such a way that it satisfies (3.11), i.e., $\sup _{n \in \mathbb{N}}\left\|\varphi^{n}(f)\right\|_{L^{2}\left(\mu_{\tau, V}\right)} \leq c\|f\|_{\infty}$ for all $f \in C(\mathcal{K})$ with some constant $c \in(0, \infty)$. Combining this with (4.4) gives
$$
\sup _{n \in \mathbb{N}}\left\|N^{n}(f)\right\|_{\mathcal{R}^{2}} \leq \sup _{n \in \mathbb{N}}\left\|\varphi^{n}(f)\right\|_{L^{2}\left(\mu_{\tau, V}\right)} \leq c\|f\|_{\infty} \quad \text { for all } f \in C(\mathcal{K}) \text {, }
$$
which allows us to apply Lemma 3.10 and conclude that $\left(N^{n}\right)_{n \in \mathbb{N}}$ converges with respect to $r_{\underline{\gamma}, \underline{u}}$ to some $N^{\infty} \in \mathcal{N}^{2}(\underline{\gamma}, \underline{u})$. We set $\varphi \bullet S^{\tau-}:=N^{\infty}$ and note that this is linear and continuous as a map from $C(\mathcal{K})$ to $\mathcal{R}^{2}$ by construction; see Remark 3.11, 1). Moreover, due to (3.14) in Lemma 3.10, we also have
\begin{equation*}
\left\|\left(\varphi \bullet S^{\tau-}\right)(f)\right\|_{\mathcal{R}^{2}} \leq c\|f\|_{\infty} \quad \text { for all } f \in C(\mathcal{K}) \tag{4.5}
\end{equation*}
To prove the regular weak* Fubini property (4.2), we first note that this holds for each $\varphi^{n} \in \mathcal{E}$ due to (3.2) in Lemma 3.1. By linearity, it is enough to consider $f \in U_{1}$, and then we can take a sequence $\left(u_{\ell}^{\prime}\right)_{\ell \in \mathbb{N}} \subseteq \underline{u}$ with $\lim _{\ell \rightarrow \infty} u_{\ell}^{\prime}=f$ in $C(\mathcal{K})$. Now we write
$$
\begin{aligned}
( & \left.\bullet S^{\tau-}\right)(f)-\varphi(f) \cdot S^{\tau-} \\
= & \left(\left(\varphi \bullet S^{\tau-}\right)(f)-\left(\varphi \bullet S^{\tau-}\right)\left(u_{\ell}^{\prime}\right)\right)+\left(\left(\varphi \bullet S^{\tau-}\right)\left(u_{\ell}^{\prime}\right)-\left(\varphi^{n} \bullet S^{\tau-}\right)\left(u_{\ell}^{\prime}\right)\right) \\
& +\left(\left(\varphi^{n} \bullet S^{\tau-}\right)\left(u_{\ell}^{\prime}\right)-\varphi^{n}\left(u_{\ell}^{\prime}\right) \cdot S^{\tau-}\right) \\
& +\left(\varphi^{n}\left(u_{\ell}^{\prime}\right) \cdot S^{\tau-}-\varphi\left(u_{\ell}^{\prime}\right) \cdot S^{\tau-}\right)+\left(\varphi\left(u_{\ell}^{\prime}\right) \cdot S^{\tau-}-\varphi(f) \cdot S^{\tau-}\right)
\end{aligned}
$$
and consider the five differences on the right-hand side one by one. The third vanishes by the regular weak* Fubini property (3.2) for $\varphi^{n} \in \mathcal{E}$. Next, $\varphi^{n} \rightarrow \varphi$ for $q_{\gamma, \underline{u} ; \tau, V}$ implies by the definition of $q_{\underline{\gamma}, \underline{u} ; \tau, V}$ that $\varphi^{n}\left(u_{k}\right) \rightarrow \varphi\left(u_{k}\right)$ in $L^{2}\left(\mu_{\tau, V}\right)$ for each $u_{k} \in \underline{u}$. So $\varphi^{n}\left(u_{\ell}^{\prime}\right) \rightarrow \varphi\left(u_{\ell}^{\prime}\right)$ in $L^{2}\left(\mu_{\tau, V}\right)$ as $n \rightarrow \infty$ and thus the fourth difference tends to 0 in $\mathcal{R}^{2}$ as $n \rightarrow \infty$ by (2.4), for each $\ell \in \mathbb{N}$. Next, $\varphi \in \Phi(\tau, V)$ gives $\left\|\varphi\left(u_{\ell}^{\prime}\right)-\varphi(f)\right\|_{L^{2}\left(\mu_{\tau, V}\right)} \leq C_{\varphi}\left\|u_{\ell}^{\prime}-f\right\|_{\infty}$ (see Definition 3.3), and $u_{\ell}^{\prime} \rightarrow f$ in $C(\mathcal{K})$ by construction. So $\varphi\left(u_{\ell}^{\prime}\right) \rightarrow \varphi(f)$ in $L^{2}\left(\mu_{\tau, V}\right)$, and the same argument as just above implies that the fifth difference tends to 0 in $\mathcal{R}^{2}$ as $\ell \rightarrow \infty$.

Now by construction, $\varphi \bullet S^{\tau-}$ is the limit of the sequence $\left(\varphi^{n} \bullet S^{\tau-}\right)_{n \in \mathbb{N}}$ for $r_{\underline{\gamma}, \underline{u}}$, and we have $\left(\varphi \bullet S^{\tau-}\right)(f)=\lim _{\ell \rightarrow \infty}\left(\varphi \bullet S^{\tau-}\right)\left(u_{\ell}^{\prime}\right)$ in $\mathcal{R}^{2}$ by the definition (3.15) of $\varphi \bullet S^{\tau-}=N^{\infty}$ from Lemma 3.10. In consequence, the first difference tends to 0 in $\mathcal{R}^{2}$ as $\ell \rightarrow \infty$. Finally, the definition of $r_{\underline{\gamma}, \underline{\underline{u}}}$ yields $\left(\varphi^{n} \bullet S^{\tau-}\right)\left(u_{k}\right) \rightarrow\left(\varphi \bullet S^{\tau-}\right)\left(u_{k}\right)$ in $\mathcal{R}^{2}$ as $n \rightarrow \infty$ for each $u_{k} \in \underline{u}$, and so also the second difference tends to 0 in $\mathcal{R}^{2}$ as $n \rightarrow \infty$, for each $\ell \in \mathbb{N}$. By taking first $\ell$ large enough and then $n$ large enough for fixed $\ell$, this implies that $\left(\varphi \bullet S^{\tau-}\right)(f)$ and $\varphi(f) \cdot S^{\tau-}$ are indistinguishable, for each $f \in C(\mathcal{K})$, and proves (4.2).

It is clear that $\varphi \bullet S^{\tau-}$ is uniquely determined on $C(\mathcal{K})$ by (4.2) when restricting this to $\llbracket 0, \tau \llbracket$, and like the right-hand side of (4.2), $\varphi \bullet S^{\tau-}$ does not depend on the choice of the control process $V \in \mathcal{V}(S)$. Moreover, it is also independent of the choice of $\underline{u}$ and $\underline{\gamma}$. To see this, take another countable dense subset $\underline{u}^{\prime}$ of $U_{1}$ and another sequence $\underline{\gamma}^{\prime}$ in $(0, \infty)$ summing to 1 . Denote by $\underline{u} \cup \underline{u}^{\prime}$ and $\underline{\gamma} \cup \underline{\gamma}^{\prime}$ the set and sequence resulting from interlacing $\underline{u}, \underline{u}^{\prime}$ and $\underline{\gamma}, \underline{\gamma}^{\prime}$, respectively, by putting the primed quantities at even and the unprimed ones at odd indices. Then obviously $\left(r_{\underline{\gamma} \cup \underline{\gamma}^{\prime},\underline{u} \cup \underline{u}^{\prime}}(\cdot)\right)^2=\left(r_{\underline{\gamma},\underline{u}}(\cdot)\right)^2+\left(r_{\underline{\gamma}^{\prime},\underline{u}^{\prime}}(\cdot)\right)^2$ and  $\left(q_{\underline{\gamma} \cup \underline{q}^{\prime}, \underline{\underline{u}} \cup \underline{u}^{\prime} ; V}(\cdot)\right)^2=\left(q_{\underline{\gamma},\underline{u} ;\tau, V}(\cdot)\right)^2+\left(q_{\underline{\gamma}^{\prime},\underline{u}^{\prime} ; V}(\cdot)\right)^2$, and so an approximation with respect to $q_{\underline{\gamma} \cup \underline{q}^{\prime}, \underline{\underline{u}} \cup \underline{u}^{\prime} ; V}$ automatically implies one with respect to both $q_{\underline{\gamma}, \underline{u} ; \tau, V}$ and $q_{\underline{\underline{q}}^{\prime}, \underline{\underline{u}} ; V}$. By uniqueness, the process $\varphi \bullet S^{\tau-}$ constructed in $\mathcal{N}^{2}\left(\underline{\gamma} \cup \underline{\gamma}^{\prime}, \underline{u} \cup \underline{u}^{\prime}\right)$ therefore coincides with\\
both the processes constructed in $\mathcal{N}^{2}(\underline{\gamma}, \underline{u})$ and $\mathcal{N}^{2}\left(\underline{\gamma}^{\prime}, \underline{u}^{\prime}\right)$.

Now recall that $\tau$ is shorthand for $\tau_{M}$ with a fixed $M \in \mathbb{N}$. So $\varphi \bullet S^{\tau_{M}-}$ is defined and unique for each $M \in \mathbb{N}$, which gives a consistent definition of $\varphi \bullet S$ on each $\llbracket 0, \tau_{M} \llbracket$. As $\tau_{M} \nearrow \infty$, the process $\varphi \bullet S$ is therefore well defined and has the stochastic Fubini property on all of $\bar{\Omega}$. Moreover, $\varphi \bullet S$ is a continuous mapping on each $\llbracket 0, \tau_{M} \llbracket$ from $C(\mathcal{K})$ to $\mathcal{R}^{2} \subseteq \mathcal{R}^{0}$ and hence maps $C(\mathcal{K})$ into $\mathcal{R}^{0}$. For fixed $t \geq 0$ and $f \in C(\mathcal{K})$, combining $((\varphi \bullet S)(f))_{t}^{*} \leq\left(\left(\varphi \bullet S^{\tau_{M}-}\right)(f)\right)_{\infty}^{*} I_{\left\{t<\tau_{M}\right\}}+((\varphi \bullet S)(f))_{t}^{*} I_{\left\{t \geq \tau_{M}\right\}}$ with Markov's inequality and (4.5) gives
$$
\begin{aligned}
P\left[((\varphi \bullet S)(f))_{t}^{*}>2 \varepsilon\right] & \leq P\left[\left(\left(\varphi \bullet S^{\tau_{M}-}\right)(f)\right)_{\infty}^{*}>\varepsilon\right]+P\left[\tau_{M} \leq t\right] \\
& \leq \frac{1}{\varepsilon^{2}}\left\|\left(\varphi \bullet S^{\tau_{M}-}\right)(f)\right\|_{\mathcal{R}^{2}}^{2}+P\left[\tau_{M} \leq t\right] \\
& \leq \frac{1}{\varepsilon^{2}} c^{2}\|f\|_{\infty}^{2}+P\left[\tau_{M} \leq t\right] .
\end{aligned}
$$
As $M \rightarrow \infty$, the second summand goes to 0 because $\tau_{M} \nearrow \infty$, and so $\varphi \bullet S: C(\mathcal{K}) \rightarrow \mathcal{R}^{0}$ is continuous for the ucp-topology. This completes the proof..\end{proof}
In some situations, one needs a stochastic Fubini theorem for less regular test functions than $f \in C(\mathcal{K})$. The next result goes in that direction.
\begin{theorem} Let $S=\left(S_{t}\right)_{t \geq 0}$ be an $\mathbb{R}^{d}$-valued semimartingale. For every measure-valued integrand $\varphi \in \Phi$, the stochastic integral process $\varphi \bullet S=\left(\varphi \bullet S_{t}\right)_{t \geq 0}$ from Theorem 4.3 also satisfies the general weak* Fubini property that for all $g \in b B_{1}(\mathcal{K})$,
\begin{equation*}
(\varphi \bullet S)(g)=\varphi(g) \cdot S \quad \text { up to indistinguishability on } \bar{\Omega}=\Omega \times(0, \infty) \text {. } \tag{4.6}
\end{equation*}
This implies in particular that $(\varphi \bullet S)(g)$ is in $\mathcal{R}^{0}$. Moreover, for every closed subset $D$ of $\mathcal{K}$, we have $(\varphi \bullet S)(D)=\varphi(D) \cdot S$, i.e.,
\begin{equation*}
\int_{D}\left(\int_{0}^{t} \varphi_{r} \mathrm{~d} S_{r}\right)(\mathrm{d} z)=\int_{0}^{t}\left(\int_{D} \varphi_{r}(\mathrm{~d} z)\right) \mathrm{d} S_{r} \quad \text { for all } t \geq 0, P \text {-a.s. } \tag{4.7}
\end{equation*}
\end{theorem}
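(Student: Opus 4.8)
The plan is to extend the Fubini identity from continuous test functions to $bB_1(\mathcal{K})$ by approximating the test function $g$ while keeping the integrand $\varphi$ fixed, and to run the limiting argument on the \emph{integrand} side, where the $\varphi_t$ are genuine finite measures, rather than on the side of $\varphi\bullet S$, which is only charge-valued. Throughout I work on a localizing interval $\llbracket 0,\tau_M\llbracket$ with $\tau:=\tau_M$ as in Remark 4.2, so that $\mu_{\tau,V}$ is finite and $\|\varphi\|_{\mathrm{var}}\in L^2(\mu_{\tau,V})$; the global statement then follows by pasting over $M$, exactly as at the end of the proof of Theorem 4.3. Since $\varphi\bullet S$ was built as a continuous linear map $C(\mathcal{K})\to\mathcal{R}^0$, I read $(\varphi\bullet S)(g)$ for $g\in bB_1(\mathcal{K})$ as the bounded-pointwise extension of this map, and the real content is that this extension is well defined and coincides with the ordinary stochastic integral $\varphi(g)\cdot S$.

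First I fix $g\in bB_1(\mathcal{K})$ and choose $(f_n)_{n\in\mathbb{N}}\subseteq C(\mathcal{K})$ with $f_n\to g$ pointwise; composing with the continuous truncation at $\pm\|g\|_\infty$ I may assume $\|f_n\|_\infty\le\|g\|_\infty=:C$ for all $n$, while keeping $f_n\to g$ pointwise. Because each $\varphi_t(\omega)$ lies in $\mathbb{M}^d$, i.e. is a vector of genuine finite signed measures, dominated convergence for these measures gives $\varphi(f_n)_t(\omega)=\int f_n\,\mathrm{d}\varphi_t(\omega)\to\int g\,\mathrm{d}\varphi_t(\omega)=\varphi(g)_t(\omega)$ pointwise on $\bar\Omega$; in particular $\varphi(g)$ is predictable as a pointwise limit of the predictable processes $\varphi(f_n)$. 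Moreover $|\varphi(f_n)|\le C\|\varphi\|_{\mathrm{var}}$ coordinatewise, and $C\|\varphi\|_{\mathrm{var}}\in L^2(\mu_{\tau,V})$, so a second application of dominated convergence, now in $L^2(\mu_{\tau,V})$, yields $\varphi(f_n)\to\varphi(g)$ in $L^2(\mu_{\tau,V})$. The same domination gives $|\varphi(g)|\le\|g\|_\infty\|\varphi\|_{\mathrm{var}}$, whence $\mathcal{D}(\varphi(g);V)\le\|g\|_\infty^2\,\mathcal{D}(\|\varphi\|_{\mathrm{var}};V)$ is finite by $\varphi\in\Phi$; thus $\varphi(g)$ is $S$-integrable and $\varphi(g)\cdot S$ is a well-defined element of $\mathcal{R}^0$.

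Next I transport this convergence through the stochastic integral. By the continuity estimate (2.4), $\varphi(f_n)\to\varphi(g)$ in $L^2(\mu_{\tau,V})$ forces $\varphi(f_n)\cdot S^{\tau-}\to\varphi(g)\cdot S^{\tau-}$ in $\mathcal{R}^2$. On the other hand, each $f_n$ is continuous, so the regular weak* Fubini property established in Theorem 4.3 (on $\llbracket 0,\tau\llbracket$) gives $(\varphi\bullet S^{\tau-})(f_n)=\varphi(f_n)\cdot S^{\tau-}$. Combining the two displays shows $(\varphi\bullet S^{\tau-})(f_n)\to\varphi(g)\cdot S^{\tau-}$ in $\mathcal{R}^2$. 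Since the limit $\varphi(g)\cdot S^{\tau-}$ depends only on $g$ and not on the approximating sequence, the extension $(\varphi\bullet S^{\tau-})(g):=\lim_n(\varphi\bullet S^{\tau-})(f_n)$ is well defined and equals $\varphi(g)\cdot S^{\tau-}$; this is (4.6) on $\llbracket 0,\tau\llbracket$. Pasting over the localizing sequence $(\tau_M)$ gives (4.6) on all of $\bar\Omega$, and since the right-hand side lies in $\mathcal{R}^0$, so does $(\varphi\bullet S)(g)$. Finally, for a closed $D\subseteq\mathcal{K}$ the functions $f_n(z):=(1-n\,\mathrm{dist}(z,D))^+$ are continuous, uniformly bounded and decrease to $I_D$ pointwise, so $I_D\in bB_1(\mathcal{K})$; specializing (4.6) to $g=I_D$, where $\varphi(I_D)_t(\omega)=\varphi_t(\omega)(D)$, yields (4.7).

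The main obstacle, and the reason the argument must be organized this way, is that $\varphi\bullet S$ is in general only charge-valued (Remark 3.11), so bounded pointwise convergence $f_n\to g$ of test functions does \emph{not} entail $(\varphi\bullet S)_t(\omega)(f_n)\to(\varphi\bullet S)_t(\omega)(g)$: dominated convergence fails for finitely additive set functions. One therefore cannot extend the Fubini identity by integrating $g$ directly against the charge $\varphi\bullet S$, nor by pushing $g$ through the elementary approximants $\varphi^n\in\mathcal{E}$ from the proof of Theorem 4.3, since $q_{\underline{\gamma},\underline{u};\tau,V}$-convergence of $\varphi^n$ controls only $\varphi^n(u_k)$ for the continuous $u_k$ and gives no convergence of $\varphi^n(g)$ for discontinuous $g$. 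The key is to move the limiting step onto the integrand $\varphi$, where the $\varphi_t$ are honest finite measures and dominated convergence is available, and to use the dominating process $\|\varphi\|_{\mathrm{var}}\in L^2(\mu_{\tau,V})$ — which is exactly why the argument is carried out on the localizing intervals $\llbracket 0,\tau_M\llbracket$.
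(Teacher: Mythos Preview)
Your argument follows the same overall architecture as the paper's proof: prelocalise to $\llbracket 0,\tau\llbracket$, approximate $g\in bB_1(\mathcal K)$ by continuous $f_n$, exploit that each $\varphi_t(\omega)$ is a genuine finite measure to get $\varphi(f_n)\to\varphi(g)$ in $L^2(\mu_{\tau,V})$, apply the regular Fubini property to each $f_n$, and pass to the limit via (2.4). The one substantive difference lies in how $(\varphi\bullet S)(g)$ is handled. The paper regards each $\varphi\bullet S_t^{\tau-}(\omega)$ as a finite charge on $\mathcal B(\mathcal K)$, so that $(\varphi\bullet S^{\tau-})(g)$ is an a~priori well-defined charge-integral, and then invokes a dominated convergence theorem for charges (Bhaskara Rao/Bhaskara Rao, \emph{Theory of Charges}, Theorem~4.6.14) to obtain $(\varphi\bullet S^{\tau-})(f_n)\to(\varphi\bullet S^{\tau-})(g)$. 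You instead \emph{define} $(\varphi\bullet S^{\tau-})(g)$ as the $\mathcal R^2$-limit of $(\varphi\bullet S^{\tau-})(f_n)$ and observe that this limit is independent of the chosen sequence because it equals $\varphi(g)\cdot S^{\tau-}$.

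Your route is more self-contained and avoids appealing to a convergence theorem for finitely additive set functions; it also sidesteps the question of selecting versions so that $f\mapsto(\varphi\bullet S_t^{\tau-})(f)(\omega)$ is pointwise linear. The paper's route, in exchange, delivers the identity for $(\varphi\bullet S)(g)$ read literally as integration of $g$ against the charge $\varphi\bullet S_t(\omega)$, which is the natural interpretation in the paper's framework. If you want your proof to match the theorem exactly as stated (with $(\varphi\bullet S)(g)$ the charge-integral), you would still need the step you explicitly avoid; as written, your argument establishes the result for the bounded-pointwise extension of $\varphi\bullet S$ from $C(\mathcal K)$ to $bB_1(\mathcal K)$, which coincides with the paper's object once the charge-DCT is granted.
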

\begin{proof} Because $\mathcal{K}$ is compact and Hausdorff, Urysohn's lemma implies that for every closed $D \subseteq \mathcal{K}$, we can write $I_{D}=\lim _{n \rightarrow \infty} f_{n}$ pointwise for a sequence $\left(f_{n}\right)_{n \in \mathbb{N}}$ in $U_{1} \subseteq C(\mathcal{K})$.\\
Hence $I_{D}$ is in $b B_{1}(\mathcal{K})$ and (4.7) follows from (4.6). Moreover, the right-hand side of (4.6) is in $\mathcal{R}^{0}$ if it is well defined, and so is then the left-hand side.\\
The proof of (4.6) is similar to the proof of Theorem 4.3 in that we first prelocalise and argue on stochastic intervals $\llbracket 0, \tau \llbracket$. Fix $\varphi \in \Phi$ and a control process $V$ for $S$, and let $\left(\tau_{M}\right)_{M \in \mathbb{N}}$ be a localizing sequence of stopping times such that for every $M \in \mathbb{N}$,
\begin{equation*}
V_{\tau_{M}-} \in L^{2}, \quad \mathcal{D}_{\tau_{M}-}\left(\|\varphi\|_{\text {var }} ; V\right) \in L^{2} \tag{4.8}
\end{equation*}
Fix $M \in \mathbb{N}$ and write $\tau:=\tau_{M}$ for brevity. By the proof of Theorem 4.3, $\varphi \bullet S^{\tau-}$ is well defined and a linear mapping from $C(\mathcal{K})$ to $\mathcal{R}^{2}$, and so each $\varphi \bullet S_{t}^{\tau-}(\omega)$ can be identified with a finite charge on $\mathcal{B}(\mathcal{K})$. As a consequence, $\left(\varphi \bullet S^{\tau-}\right)(g)$ is a well-defined process, even for any Borel-measurable bounded function $g$ on $\mathcal{K}$.\\
Now fix $g \in b B_{1}(\mathcal{K})$ and write $g=\lim _{n \rightarrow \infty} f_{n}$ pointwise for a sequence $\left(f_{n}\right)_{n \in \mathbb{N}} \subseteq C(\mathcal{K})$. As $g$ is bounded by $c$, say, we may assume that $\left\|f_{n}\right\|_{\infty} \leq 2 c$ for all $n$, and so the sequence $h_{n}:=g-f_{n}, n \in \mathbb{N}$, tends to 0 pointwise on $\mathcal{K}$ and is bounded by $4 c$. We first argue that $\varphi(g)$ is predictable and in $L^{2}\left(\mu_{\tau, V}\right)$, so that $\varphi(g) \cdot S^{\tau-}$ is well defined and in $\mathcal{R}^{2}$ due to (2.4). By (4.8), we have $\left|\|\varphi\|_{\text {var }}\right|<\infty \mu_{\tau, V}$-a.e., and so dominated convergence yields $\varphi\left(f_{n}\right) \rightarrow \varphi(g) \mu_{\tau, V^{-}}$a.e. as $n \rightarrow \infty$. Fatou's lemma for $\mu_{\tau, V}$ together with $\varphi \in \Phi(\tau, V)$ yields via (3.7) that
$$
\|\varphi(g)\|_{L^{2}\left(\mu_{\tau, V}\right)} \leq \liminf _{n \rightarrow \infty}\left\|\varphi\left(f_{n}\right)\right\|_{L^{2}\left(\mu_{\tau, V}\right)} \leq C_{\varphi} \sup _{n \in \mathbb{N}}\left\|f_{n}\right\|_{\infty}<\infty
$$
and so $\varphi(g)$ satisfies the required integrability for being in $L^{2}\left(\mu_{\tau, V}\right)$. To check predictability (up to indistinguishability), we note that $\left|\varphi\left(f_{n}\right)\right| \leq\left\|f_{n}\right\|_{\infty}\left|\|\varphi\|_{\text {var }}\right|$ leads via (4.8) to
$$
\mathcal{D}_{\tau-}\left(\varphi\left(f_{n}\right) ; V\right)=\int_{0}^{\tau-}\left|\varphi_{r}\left(f_{n}\right)\right|^{2} \mathrm{~d} V_{r} \leq\left\|f_{n}\right\|_{\infty}^{2} \mathcal{D}_{\tau-}\left(\|\varphi\|_{\text {var }} ; V\right)<\infty \quad P \text {-a.s. }
$$
so that dominated convergence first gives $\mathcal{D}_{\tau-}\left(\varphi\left(f_{n}\right) ; V\right) \rightarrow \mathcal{D}_{\tau-}(\varphi(g) ; V) P$-a.s. Thanks to (4.8), we can again use dominated convergence to obtain $\varphi(g)=\lim _{n \rightarrow \infty} \varphi\left(f_{n}\right)$ in $L^{2}\left(\mu_{\tau, V}\right)$ and hence also $\mu_{\tau, V}$-a.e. along a subsequence. As each $\varphi\left(f_{n}\right)$ is predictable, so is $\varphi(g)$ (up to indistinguishability). Thus $\varphi(g) \cdot S^{\tau-}$ is well defined and in $\mathcal{R}^{2}$, and so is then $\left(\varphi \bullet S^{\tau-}\right)(g)$ if we can prove (4.6) on $\llbracket 0, \tau \llbracket$.\\
To achieve that last point, we write
$$
\begin{aligned}
\left(\varphi \bullet S^{\tau-}\right)(g)-\varphi(g) \cdot S^{\tau-}= & \left(\left(\varphi \bullet S^{\tau-}\right)(g)-\left(\varphi \bullet S^{\tau-}\right)\left(f_{n}\right)\right) \\
& +\left(\left(\varphi \bullet S^{\tau-}\right)\left(f_{n}\right)-\varphi\left(f_{n}\right) \cdot S^{\tau-}\right) \\
& +\left(\varphi\left(f_{n}\right) \cdot S^{\tau-}-\varphi(g) \cdot S^{\tau-}\right) .
\end{aligned}
$$
The second summand on the right-hand side is 0 by the regular Fubini property (4.2). We have just seen that $\varphi\left(f_{n}\right) \rightarrow \varphi(g)$ in $L^{2}\left(\mu_{\tau, V}\right)$ so that the third summand converges to 0 in $\mathcal{R}^{2}$ by (2.4). For the first summand, we have $h_{n}=g-f_{n} \rightarrow 0$ pointwise and $\left|h_{n}\right| \leq 4 c$ for all $n$. As each $\varphi \bullet S_{t}^{\tau-}(\omega)$ can be identified with a finite charge on $\mathcal{B}(\mathcal{K})$, we get $\left(\varphi \bullet S^{\tau-}\right)\left(f_{n}\right) \rightarrow\left(\varphi \bullet S^{\tau-}\right)(g)$ for $\mu_{\tau, V^{-}}$almost all $(\omega, t)$ by dominated convergence; see Bhaskara Rao/Bhaskara Rao [6, Theorem 4.6.14]. This completes the proof.\end{proof}
\begin{remark} Apart from being more general then Theorem 4.3, Theorem 4.4 also has the advantage that it allows us to replace $\varphi$ by $I_{D} \varphi$ for a closed set $D \subseteq \mathcal{K}$ and hence obtain (4.3) with an integral over $D$ instead of over $\mathcal{K}$. This cannot be done by simply replacing $f$ by $I_{D} f$ in Theorem 4.3 as $I_{D} f$ is no longer in $C(\mathcal{K})$. That extra generality will be used in Section 6 below.\end{remark}
\section{An application to Volterra semimartingales}
Let $S=\left(S_{t}\right)_{t \geq 0}$ be an $\mathbb{R}^{d}$-valued semimartingale. In this section, we study stochastic processes $X=\left(X_{t}\right)_{t \geq 0}$ of the form
\begin{equation*}
X_{t}=\int_{0}^{t} \Psi_{t, s} \mathrm{~d} S_{s}, \quad t \geq 0 \tag{5.1}
\end{equation*}
for a two-parameter process $\Psi=\left(\Psi_{t, s}\right)_{t \geq 0,0 \leq s \leq t}$ with suitable measurability and integrability properties (made precise later). We look for conditions on $\Psi$ which ensure that $X$ is again a semimartingale, and we also want to say more about its decomposition in that case.\\
Processes of the form (5.1) are often called Volterra(-type) processes, and there is a large and growing body of literature around them. Forward or backward stochastic Volterra integral equations look for a process $X$ solving (in the forward case, say) an equation of the form $X_{t}=X_{0}+\int_{0}^{t} a\left(t, s,\left(X_{r}\right)_{r \leq s}\right) \mathrm{d} s+\int_{0}^{t} b\left(t, s,\left(X_{r}\right)_{r \leq s}\right) \mathrm{d} W_{s} ;$ see e.g. Hernández [14] for a recent\\
article with many more references. More specifically, stochastic Volterra equations take the form 
$$X_{t}=x_{0}(t)+\int_{0}^{t} K_{\mu}(s, t) \mu\left(s, X_{s}\right) \mathrm{d} s+\int_{0}^{t} K_{\sigma}(s, t) \sigma\left(s, X_{s}\right) \mathrm{d} W_{s},$$
 and one then studies existence, uniqueness and properties of solutions; see for instance Prömel/Scheffels [27]. If one specifies further to 
$$X_{t}=X_{0}+\int_{0}^{t} K(t-s) \mu\left(X_{s}\right) \mathrm{d} s+\int_{0}^{t} K(t-s) \sigma\left(X_{s}\right) \mathrm{d} W_{s}$$
 with $\mu, \sigma \sigma^{\top}$ affine, there is a large literature on such affine Volterra processes; see e.g. Abi Jaber et al. [1] for an important contribution.\\
In a different direction, processes of the form $X_{t}=\int_{0}^{t} K(t, s) \mathrm{d} S_{s}$ with $S$ a semimartingale, or $S=L$ a Lévy process, or $S=W$ a Brownian motion, have been studied for path properties, stochastic calculus, support properties and other topics; see e.g. Neuman [26], Bender et al. [5], Di Nunno et al. [11], Kalinin [17], Baudoin/Nualart [4], to name just a few (diverse, but not necessarily representative) contributions. A key difference between the above two directions is whether $X$ is given endogenously as the solution of some equation or exogenously from given quantities $\Psi, S$ not depending on $X$.\\
The question whether $X$ in (5.1) is a semimartingale seems to have two main lines of approach. If $X_{t}=\int_{0}^{t} g(t-s) \mathrm{d} L_{s}$, with a deterministic function $g$ and a Lévy process $L$, is a convolution-type integral or moving average process, Basse/Pedersen [3] have given necessary and sufficient condition, on $g$ and the Lévy triplet of $L$, for $X$ to be a semimartingale. An extension to $X_{t}^{\vartheta}=\int_{0}^{t} K^{\vartheta}(t, s) \mathrm{d} W_{s}$ with a Brownian motion $W$ and $K^{\vartheta}(t, s)$ of a particular form has recently been presented by El Omari [12]. At the other end of the scale is the work by Protter [28] who assumes that for all $s$ and $\omega$, (5.2) $t \mapsto \Psi_{t, s}(\omega)=\Psi(t, s, \omega)$ is in $C^{1}$ with $t \mapsto \psi(t, s, \omega):=\frac{\partial \Psi}{\partial t}(t, s, \omega)$ locally Lipschitz. He then writes $X$ as
\begin{equation*}
X_{t}=\int_{0}^{t} \Psi_{t, s} \mathrm{~d} S_{s}=\int_{0}^{t} \Psi_{s, s} \mathrm{~d} S_{s}+\int_{0}^{t}\left(\Psi_{t, s}-\Psi_{s, s}\right) \mathrm{d} S_{s} \tag{5.3}
\end{equation*}
uses (5.2) to get
\begin{equation*}
\Psi_{t, s}(\omega)-\Psi_{s, s}(\omega)=\int_{s}^{t} \psi(r, s, \omega) \mathrm{d} r \tag{5.4}
\end{equation*}
and handles the last term in (5.3) by writing
\begin{equation*}
\int_{0}^{t}\left(\Psi_{t, s}-\Psi_{s, s}\right) \mathrm{d} S_{s}=\int_{0}^{t}\left(\int_{s}^{t} \psi(r, s, \cdot) \mathrm{d} r\right) \mathrm{d} S_{s}=\int_{0}^{t}\left(\int_{0}^{r} \psi(r, s, \cdot) \mathrm{d} S_{s}\right) \mathrm{d} r . \tag{5.5}
\end{equation*}
Note that (5.2) implies via (5.4) that all the $t \mapsto \Psi_{t, s}$ are absolutely continuous with respect to one fixed (namely Lebesgue) measure, and so the last step in (5.5) follows by a standard stochastic Fubini theorem. Thus the last term in (5.3) is absolutely continuous with respect to Lebesgue measure and hence of finite variation and a semimartingale.

As we discuss later in Section 6, the assumption (5.2) or (5.4) means that we are in the so-called dominated case. Let us now see what we can do if we relax this restrictive condition. So let $\Psi=\left(\Psi_{t, s}\right)_{t \geq 0,0 \leq s \leq t}$ be an $\mathbb{R}^{d}$-valued two-parameter process such that for each $t \geq 0$, the process $\left(\Psi_{t, s}\right)_{0 \leq s \leq t}$ is predictable and $S$-integrable on $[0, t]$, and such that the diagonal $\left(\Psi_{s, s}\right)_{s \geq 0}$ is also predictable and $S$-integrable. Then we can write, as in (5.3),
\begin{equation*}
X_{t}=\int_{0}^{t} \Psi_{t, s} \mathrm{~d} S_{s}=\int_{0}^{t} \Psi_{s, s} \mathrm{~d} S_{s}+\int_{0}^{t}\left(\Psi_{t, s}-\Psi_{s, s}\right) \mathrm{d} S_{s}=: \int_{0}^{t} \Psi_{s, s} \mathrm{~d} S_{s}+Y_{t} \tag{5.6}
\end{equation*}
and the first $\mathrm{d} S$-integral on the right-hand side is well defined and a semimartingale. Note that if $S=M$ is a local martingale, that integral need not be a local martingale unless we know more about the diagonal $\left(\Psi_{s, s}\right)_{s \geq 0}$; see the classic example due to Emery [13]. We return to this point in Remark 5.2 below.

Now let us agree, as is natural in view of the problem, that $\Psi_{t, s}:=0$ for $s>t$, and also assume that for each $s \geq 0$, the process $\left(\Psi_{t, s}\right)_{t \geq 0}$ is right-continuous and of finite variation. This clearly generalises the assumption (5.2) or (5.4) in [28]. To apply our results from Section 4, we need to work with signed measures on a compact metric space. For each fixed $T \in(0, \infty)$, we therefore consider $\mathcal{K}_{T}:=[0, T]$ and associate to $\Psi$ for each $s \geq 0$ a "random signed $\mathbb{R}^{d}$-valued distribution function" $\bar{\varphi}_{s}$ on $[0, \infty)$ via
\begin{equation*}
t \mapsto \bar{\varphi}_{s}(t)(\omega):=I_{\{t \geq s\}}\left(\Psi_{t, s}(\omega)-\Psi_{s, s}(\omega)\right) \quad \text { for } s \geq 0 \text { and } t \geq 0 \text {. } \tag{5.7}
\end{equation*}
Restricted to $t \in \mathcal{K}_{T}=[0, T]$ (and hence also $s \leq T$ as $\left.s \leq t\right)$, the process $\bar{\varphi}^{(T)}=\left(\bar{\varphi}_{s}\right)_{0 \leq s \leq T}$ then induces an $\mathbb{M}^{d}\left(\mathcal{K}_{T}\right)$-valued process $\varphi^{(T)}=\left(\varphi_{s}^{(T)}\right)_{0 \leq s \leq T}$ which is weak* predictable because $s \mapsto \Psi_{t, s}$ is predictable. In fact,
\begin{equation*}
\varphi_{s}^{(T)}([0, t]):=\bar{\varphi}_{s}^{(T)}(t)=I_{\{t \geq s\}}\left(\Psi_{t, s}-\Psi_{s, s}\right) \quad \text { for } 0 \leq t \leq T \text { and } 0 \leq s \leq t \tag{5.8}
\end{equation*}
so we first get $\varphi_{s}^{(T)}(\{0\})=I_{\{0 \geq s\}}\left(\Psi_{0, s}-\Psi_{s, s}\right)=I_{\{s=0\}}\left(\Psi_{0,0}-\Psi_{0,0}\right)=0$ as $s \geq 0$, and then $s \mapsto \varphi_{s}^{(T)}((0, t])=\varphi_{s}^{(T)}([0, t])=I_{\{t \geq s\}}\left(\Psi_{t, s}-\Psi_{s, s}\right)$ is predictable by our assumptions. By\\
measure-theoretic induction, $\varphi^{(T)}(g)$ is then predictable for $g$ bounded and measurable on $\mathcal{K}_{T}$ and in particular for $g=f \in C\left(\mathcal{K}_{T}\right)$. Note that the variation norm of $\varphi_{s}^{(T)}$ is given by

$$
\left\|\varphi_{s}^{(T), i}\right\|_{\mathrm{var}}=\operatorname{var}_{t}\left(\left.\Psi_{\cdot, s}^{i}\right|_{[0, T]}\right)=\int_{s}^{T}\left|\Psi_{\mathrm{d} t, s}^{i}\right| \quad \text { for } i=1, \ldots, d
$$

With these preparations, we obtain the following result.

\begin{theorem} Suppose for each $T \in(0, \infty)$ and each $s \in[0, T]$ that the process $t \mapsto \Psi_{t, s}$ is of finite variation on $[0, T]$ and its $\left(\mathbb{R}^{d}\right.$-valued, componentwise) total variation process
\begin{equation*}
\left(\operatorname{var}_{t}\left(\Psi_{\cdot, s} \mid[0, T]\right)\right)_{0 \leq s \leq T}=\left(\int_{s}^{T}\left|\Psi_{\mathrm{d} t, s}^{i}\right|\right)_{0 \leq s \leq T}^{i=1, \ldots, d} \text { is } S \text {-integrable on }[0, T] \text {. } \tag{5.9}
\end{equation*}
Then the process $X$ from (5.1) can be written as
\begin{align*}
X_{t}=\int_{0}^{t} \Psi_{s, s} \mathrm{~d} S_{s}+Y_{t} & =\int_{0}^{t} \Psi_{s, s} \mathrm{~d} S_{s}+\int_{0}^{t}\left(\Psi_{t, s}-\Psi_{s, s}\right) \mathrm{d} S_{s}  \tag{5.10}\\
& =\int_{0}^{t} \Psi_{s, s} \mathrm{~d} S_{s}+\left(\varphi^{(T)} \bullet S_{T}\right)([0, t]), \quad 0 \leq t \leq T,
\end{align*}
for the $\mathbb{M}^{d}\left(\mathcal{K}_{T}\right)$-valued process $\varphi^{(T)}$ associated to $\Psi$. If each process $\varphi^{(T)} \bullet S$ is measurevalued (as opposed to only charge-valued), then $X$ is a semimartingale and $Y$ is predictable and of finite variation.\end{theorem}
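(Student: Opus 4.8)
The plan is to reduce the statement to an application of Theorems 4.3 and 4.4 and then to read off the path properties of $Y$ from the fact that, under the extra hypothesis, $\varphi^{(T)}\bullet S_T$ is a genuine finite signed measure. First I would check that $\varphi^{(T)}$ is an admissible integrand, i.e. that $\varphi^{(T)}\in\Phi$ in the sense of Definition 4.1. By the computation recorded just before the theorem, the variation norm of $\varphi^{(T)}_s$ is the componentwise total variation $\|\varphi^{(T),i}_s\|_{\mathrm{var}}=\int_s^T|\Psi^i_{\mathrm{d}t,s}|$, so assumption (5.9) says precisely that $\|\varphi^{(T)}\|_{\mathrm{var}}$ is $S$-integrable on $[0,T]$; by Remark 4.2 this is exactly (4.1), hence $\varphi^{(T)}\in\Phi$ and $\varphi^{(T)}\bullet S$ exists. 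The first two equalities in (5.10) are just the algebraic splitting (5.6). For the third I would apply the general weak* Fubini property (4.7) of Theorem 4.4 to the closed set $D=[0,t]\subseteq\mathcal{K}_T=[0,T]$: by (5.8) we have $\varphi^{(T)}_s([0,t])=I_{\{s\le t\}}(\Psi_{t,s}-\Psi_{s,s})$, so evaluating $\varphi^{(T)}\bullet S$ at the terminal time $T$ gives $(\varphi^{(T)}\bullet S_T)([0,t])=\int_0^T I_{\{s\le t\}}(\Psi_{t,s}-\Psi_{s,s})\,\mathrm{d}S_s=\int_0^t(\Psi_{t,s}-\Psi_{s,s})\,\mathrm{d}S_s=Y_t$, which is the required identity.

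Next I would deduce the path properties under the extra hypothesis. If $\varphi^{(T)}\bullet S$ is measure-valued (rather than only charge-valued), then for each $\omega$ the object $\nu:=\varphi^{(T)}\bullet S_T(\omega)$ is a \emph{countably additive} finite signed measure on $[0,T]$, and $Y_t=\nu([0,t])$ is its distribution function; hence $t\mapsto Y_t$ is càdlàg and of finite variation on $[0,T]$, with total variation equal to $\|\nu\|_{\mathrm{var}}$. This is exactly where the measure-valued property is used: for a mere charge the distribution function need not be càdlàg or of finite variation. On the other hand, $Y_t=\int_0^t(\Psi_{t,s}-\Psi_{s,s})\,\mathrm{d}S_s$ is a stochastic integral over $[0,t]$ of an $s$-predictable integrand, hence $\mathcal{F}_t$-measurable, so $Y$ is adapted. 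An adapted càdlàg process of finite variation is a semimartingale; since $\int_0^\cdot\Psi_{s,s}\,\mathrm{d}S_s$ is also a semimartingale, (5.10) shows that $X$ is a semimartingale. Letting $T$ vary and noting that $Y$ does not depend on $T$ gives the statement on all of $[0,\infty)$.

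The hard part will be the predictability of $Y$. Note first that predictability cannot be transferred along the approximation of Lemma 3.6, since the natural approximants $(\varphi^n\bullet S_T)([0,\cdot])$ with $\varphi^n\in\mathcal{E}$ are $\mathcal{F}_T$-measurable and not even adapted in $t$; the adaptedness of $Y$ is a genuine consequence of the Fubini identity (5.10). I would therefore verify predictability of the càdlàg adapted finite-variation process $Y$ through the standard criterion that $Y$ jumps only at predictable times and that its jump at any predictable time is measurable with respect to the strict past. For this I would apply Theorem 4.4 with the test function $I_{\{t_0\}}$, which lies in $bB_1(\mathcal{K}_T)$ as a pointwise limit of tent functions: since the atom of $\varphi^{(T)}_s$ at $t_0$ is the jump $\Psi_{t_0,s}-\Psi_{t_0-,s}$ of $t\mapsto\Psi_{t,s}$ at $t_0$ and requires $s<t_0$, (4.6) gives $\Delta Y_{t_0}=\nu(\{t_0\})=\int_{[0,t_0)}(\Psi_{t_0,s}-\Psi_{t_0-,s})\,\mathrm{d}S_s$. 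Because this integral runs over the half-open interval $[0,t_0)$, it equals a left limit of a stochastic integral and is hence $\mathcal{F}_{t_0-}$-measurable. Combining this with the continuous part of $Y$, which is continuous, adapted and of finite variation, hence predictable, yields the predictable jump structure once the jump times are shown to be predictable.

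The delicate point to be pinned down carefully is precisely this last claim: that the atoms of the finite measure $\nu$ (equivalently, the jumps of $Y$) are exhausted by a sequence of predictable stopping times, so that $Y$ has no jump at a totally inaccessible time and its accessible jumps are measurable with respect to the strict past. Since the jump of $Y$ at $t_0$ is built from the first-variable jumps $\Psi_{t_0,s}-\Psi_{t_0-,s}$ of the kernel, this is where the finite-variation-in-$t$ structure of $\Psi$, and not merely the measure-valued property of $\varphi^{(T)}\bullet S$, has to be exploited; I expect this to be the main obstacle in completing the argument.
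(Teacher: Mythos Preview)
Your argument for (5.10) and for the semimartingale property of $X$ is correct and matches the paper: check $\varphi^{(T)}\in\Phi$ via (5.9), apply the general weak* Fubini property (4.7) of Theorem 4.4 with $D=[0,t]$, and read off adaptedness from the stochastic-integral representation and RCLL plus finite variation from the fact that $\nu=\varphi^{(T)}\bullet S_T$ is a genuine signed measure.

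The gap is exactly where you locate it: predictability of $Y$. Your plan via the jump criterion is not wrong in principle, but the step ``the jump times of $Y$ are exhausted by predictable stopping times'' is not established, and there is no obvious mechanism in the hypotheses that would force it; the finite-variation-in-$t$ structure of $\Psi$ alone does not prevent $\nu$ from charging a totally inaccessible time. The paper avoids this obstacle entirely by a different, shorter route. The key observation is that $\varphi^{(T)}_s([0,t])=I_{\{s<t\}}\varphi^{(T)}_s([0,t])$ (the diagonal contribution vanishes by (5.8)), so in fact
\[
Y_t=\varphi^{(T)}(I_{[0,t]})\cdot S_{t-}=(\varphi^{(T)}\bullet S_{t-})(I_{[0,t]}),
\]
the second equality again by (4.7). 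One then introduces the two-parameter process $U(s,t):=(\varphi^{(T)}\bullet S_{t-})(I_{[0,s]})$ on $\mathcal{K}_T\times\bar\Omega$. This is right-continuous in $s$ (here the measure-valued hypothesis is used), left-continuous and adapted in $t$, hence $\mathcal{B}(\mathcal{K}_T)\otimes\mathcal{P}$-measurable; a monotone class argument then gives that the diagonal $t\mapsto U(t,t)=Y_t$ is predictable. No analysis of jump times is needed.

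In short: replace your jump-time programme by the observation $Y_t=(\varphi^{(T)}\bullet S_{t-})(I_{[0,t]})$ and the diagonal-of-a-two-parameter-process trick.
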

\begin{proof}Proof. For each $T \in(0, \infty)$, we prove (5.10) by working with the compact set $\mathcal{K}_{T}:=[0, T]$. Results on $[0, \infty)$ are then obtained by pasting together.\\
1) Fix $T \in(0, \infty)$ and consider $\varphi^{(T)}$ as in (5.8). Using the definition of $Y$ in (5.6) and $\Psi_{t, s}=0$ for $s>t$ as well as (5.8), we can write, for $0 \leq t \leq T$,
\begin{align*}
Y_{t} & =\int_{0}^{t} I_{\{t \geq s\}}\left(\Psi_{t, s}-\Psi_{s, s}\right) \mathrm{d} S_{s}=\varphi^{(T)}\left(I_{[0, t]}\right) \cdot S_{t}  \tag{5.11}\\
& =\int_{0}^{T} \varphi_{s}^{(T)}([0, t]) \mathrm{d} S_{s}=\varphi^{(T)}\left(I_{[0, t]}\right) \cdot S_{T} . \tag{5.12}
\end{align*}
Thanks to the assumption (5.9), we can now invoke the weak* Fubini property (4.7) in Theorem 4.4 to obtain with the closed set $D:=[0, t] \subseteq \mathcal{K}_{T}$ that
\begin{equation*}
\left(\varphi^{(T)}\left(I_{[0, t]}\right)\right) \cdot S_{s}=\left(\varphi^{(T)} \bullet S_{s}\right)\left(I_{[0, t]}\right) \quad \text { for all } s \in[0, T] \tag{5.13}
\end{equation*}
giving for $s=T$ via (5.12) that
\begin{equation*}
Y_{t}=\left(\varphi^{(T)} \bullet S_{T}\right)\left(I_{[0, t]}\right) \quad \text { for } 0 \leq t \leq T \tag{5.14}
\end{equation*}
This proves (5.10). If $\varphi^{(T)} \bullet S$ has values in $\mathbb{M}\left(\mathcal{K}_{T}\right)$, then $Y=\left(Y_{t}\right)_{0 \leq t \leq T}$ is adapted by (5.11), and by (5.14) RCLL and of finite variation with respect to $t$, and hence in particular a semimartingale on $[0, T]$. So is then $X$ by (5.10) as $\left(\Psi_{s, s}\right)_{s \geq 0}$ is $S$-integrable.\\
2)  To show that $Y$ is even predictable, we first note that (5.8) shows that we have $\varphi_{s}^{(T)}\left(I_{[0, t]}\right)=\bar{\varphi}_{s}^{(T)}(t)=\varphi_{s}^{(T)}\left(I_{[0, t]}\right) I_{\{s<t\}}$ and therefore, by $(5.11)$,
\begin{equation*}
Y_{t}=\int_{0}^{t} \varphi_{s}^{(T)}\left(I_{[0, t]}\right) I_{\{s<t\}} \mathrm{d} S_{s}=\int_{0}^{t-} \varphi_{s}^{(T)}\left(I_{[0, t]}\right) \mathrm{d} S_{s}=\varphi^{(T)}\left(I_{[0, t]}\right) \cdot S_{t-\cdot} \tag{5.15}
\end{equation*}
On the other hand, the process $U(s, t):=\left(\varphi^{(T)} \bullet S_{t-}\right)\left(I_{[0, s]}\right)$ on $\mathcal{K}_{T} \times \bar{\Omega}$ is well defined, because $t \mapsto\left(\varphi^{(T)} \bullet S_{t}\right)\left(I_{[0, s]}\right)$ is by Theorem 4.4 in $\mathcal{R}^{0}$ and hence RCLL (with respect to $t$ ), and $U$ is $\mathcal{B}\left(\mathcal{K}_{T}\right) \otimes \mathcal{P}$-measurable as it is right-continuous in $s$ and left-continuous in $t$ and adapted to $\mathbb{F}$. Note that right-continuity in $s$ uses that $\varphi^{(T)} \bullet S$ is measure-valued. This implies by a monotone class argument that the diagonal of $U$ is predictable. Finally, (5.15) and (5.13) also yield that
$$
Y_{t}=\varphi^{(T)}\left(I_{[0, t]}\right) \cdot S_{t-}=\left(\varphi^{(T)} \bullet S_{t-}\right)\left(I_{[0, t]}\right)=U(t, t)
$$
so $Y$ is the diagonal of $U$ and therefore predictable. This completes the proof.\end{proof}
\begin{remark} The decomposition $X_{t}=\int_{0}^{t} \Psi_{s, s} \mathrm{~d} S_{s}+Y_{t}$ in (5.6) also allows us to say more about the structure and decomposition of the semimartingale $X$, again under all the assumptions of Theorem 5.1. We have just seen that $Y$ is predictable and of finite variation. The first summand in (5.6) is a stochastic integral, with respect to $S$, of the predictable $S$-integrable process $\left(\Psi_{s, s}\right)_{s \geq 0}$. Thus it is always a semimartingale, and we can work out its characteristics from the behaviour of $\Psi$ on the diagonal $\left(\Psi_{s, s}\right)_{s \geq 0}$ and from the characteristics of $S$ itself. If $S=M$ is a local martingale, we can also ask whether the integral is even a local martingale. (In that case, $X_{t}=\int_{0}^{t} \Psi_{t, s} \mathrm{~d} M_{s}, t \geq 0$, is a special semimartingale.) This depends on the interaction between the diagonal of $\Psi$ and the jumps of $M$; we need to check whether the increasing process $\left(\sum_{0 \leq s \leq}\left(\Psi_{s, s}^{\top} \Delta M_{s}\right)^{2}\right)^{1 / 2}$ is locally integrable. (Of course, this is trivially satisfied if $M$ is continuous.) For more details, see Jacod/Shiryaev [16, Remark III.6.28].\end{remark}
\section{Connections to the classic case}
In this section, we relate our results to some of the classic versions of stochastic Fubini theorems available in the literature. We do not aim for maximal generality, but rather explain the main conceptual differences.\\
We start again with an $\mathbb{R}^{d}$-valued semimartingale $S=\left(S_{t}\right)_{t \geq 0}$ on $(\Omega, \mathcal{F}, \mathbb{F}, P)$ and a compact metric space $\mathcal{K}$. We consider a fixed weak* predictable process $\varphi=\left(\varphi_{t}\right)_{t \geq 0}$ taking values in $\mathbb{M}^{d}(\mathcal{K})$ and write it in the form
\begin{equation*}
\varphi_{t}(\mathrm{~d} z)(\omega)=\psi_{t}(z)(\omega) \rho(\omega, t, \mathrm{~d} z) \tag{6.1}
\end{equation*}
for an $\mathbb{R}^{d}$-valued process $\psi$ on $\Omega \times(0, \infty) \times \mathcal{K}$ which is product-measurable and such that $\psi(z)$ is for each $z \in \mathcal{K}$ an $\mathbb{R}^{d}$-valued predictable process on $\bar{\Omega}$. In (6.1), $\rho(\omega, t, \mathrm{~d} z)$ is a transition kernel from $(\bar{\Omega}, \mathcal{P})$ to $(\mathcal{K}, \mathcal{B}(\mathcal{K}))$, i.e., $\rho(\cdot, \cdot, D)$ is a predictable process for each $D \in \mathcal{B}(\mathcal{K})$ and $\rho(\omega, t, \cdot)$ is a finite (nonnegative) measure on $\mathcal{B}(\mathcal{K})$ for each $(\omega, t) \in \bar{\Omega}$. This is not a restriction in generality for $\varphi$ as we can always take $\rho:=\sum_{i=1}^{d}\left|\varphi^{i}\right|$ and $\psi:=\frac{d \varphi}{d \rho}$. From (6.1) and using Cauchy-Schwarz, we obtain that $\|\varphi\|_{\text {var }}$ has the coordinates
\begin{equation*}
\left\|\varphi_{t}^{i}\right\|_{\mathrm{var}}=\int_{\mathcal{K}}\left|\psi_{t}^{i}(z)\right| \rho_{t}(\mathrm{~d} z) \leq\left(\rho_{t}(\mathcal{K}) \int_{\mathcal{K}}\left|\psi_{t}^{i}(z)\right|^{2} \rho_{t}(\mathrm{~d} z)\right)^{1 / 2} . \tag{6.2}
\end{equation*}
So $\|\varphi\|_{\text {var }}$ is $S$-integrable if (and only if) for some control process $V$ for $S$,
\begin{equation*}
\text { the process } \int \sum_{i=1}^{d}\left(\int_{\mathcal{K}}\left|\psi_{r}^{i}(z)\right| \rho_{r}(\mathrm{~d} z)\right)^{2} \mathrm{~d} V_{r} \text { is finite-valued, } \tag{6.3}
\end{equation*}
which is implied by the stronger condition that
\begin{equation*}
\text { the process } \int \rho_{r}(\mathcal{K})\left(\int_{\mathcal{K}}\left|\psi_{r}(z)\right|^{2} \rho_{r}(\mathrm{~d} z)\right) \mathrm{d} V_{r} \text { is finite-valued. } \tag{6.4}
\end{equation*}
A very special case of (6.1) arises if we assume that $\rho(\omega, t, \mathrm{~d} z) \equiv \eta(\mathrm{d} z)$ for some finite (nonnegative) measure $\eta$ on $\mathcal{B}(\mathcal{K})$. This means that the $(\omega, t)$-indexed family of measures $\varphi_{t}(\mathrm{~d} z)(\omega)$ is dominated by one fixed measure and we have
\begin{equation*}
\varphi_{t}(\mathrm{~d} z)(\omega)=\psi_{t}(z)(\omega) \eta(\mathrm{d} z) \tag{6.5}
\end{equation*}
where $\psi$ is a $z$-indexed family of predictable processes on $\bar{\Omega}$. Apart from slightly different conditions on $\mathcal{K}$ and $\eta$, this is the framework in which classic stochastic Fubini theorems are cast. More precisely, Protter [29, Section IV.6] and Veraar [31] both assume that ( $\mathcal{K}, \mathbb{K}$ ) is a measurable space and $\eta$ is a measure on $\mathbb{K}$, finite in [29] and $\sigma$-finite in [31]. Next, $S$ is in both $[29,31]$ a real-valued semimartingale, assumed continuous in [31], and $\psi$ is $\mathcal{P} \otimes \mathbb{K}$-measurable in [29] and $\rm{Prog}\otimes \mathbb{K}$-measurable in [31], where Prog is the progressive $\sigma$-field on $\bar{\Omega}$. The integrability condition on $\psi$ in [29] is that
\begin{equation*}
\text { the process }\left(\int_{\mathcal{K}}\left|\psi_{t}(z)\right|^{2} \eta(\mathrm{d} z)\right)_{t \geq 0}^{1 / 2} \text { is } S \text {-integrable, } \tag{6.6}
\end{equation*}
which by finiteness of $\eta$ is equivalent to the sufficient condition (6.4) above. If $S$ is continuous with canonical decomposition $S=S_{0}+M+A$, a sufficient condition for our weaker assumption (6.3) is by Shiryaev/Cherny [30, Definition 3.9 and part (ii) of the subsequent remark] that
\begin{equation*}
\text { the processes } \int\left(\int_{\mathcal{K}}\left|\psi_{r}(z)\right| \eta(\mathrm{d} z)\right) \mathrm{d} A_{r} \text { and } \int\left(\int_{\mathcal{K}}\left|\psi_{r}(z)\right| \eta(\mathrm{d} z)\right)^{2} \mathrm{~d}[M]_{r} \tag{6.7}
\end{equation*}
are both finite-valued.\\
The condition (6.7) is slightly different from the assumption in [31, Theorem 2.2, conditions (2.1) and (2.2)] that
$$
\text { the processes } \int_{\mathcal{K}}\left(\int\left|\psi_{r}(z)\right| \mathrm{d} A_{r}\right) \eta(\mathrm{d} z) \text { and } \int_{\mathcal{K}}\left(\int\left|\psi_{r}(z)\right|^{2} \mathrm{~d}[M]_{r}\right)^{1 / 2} \eta(\mathrm{d} z)
$$
are both finite-valued.\\
In the frameworks of either Protter [29, Section IV.6] or Veraar [31], the classic stochastic Fubini theorem says that in the dominated case (6.5), we have $P$-a.s. for all $t \geq 0$ that
\begin{equation*}
\int_{0}^{t}\left(\int_{\mathcal{K}} \psi_{r}(z) \eta(\mathrm{d} z)\right) \mathrm{d} S_{r}=\int_{\mathcal{K}}\left(\int_{0}^{t} \psi_{r}(z) \mathrm{d} S_{r}\right) \eta(\mathrm{d} z) \tag{6.8}
\end{equation*}
If we replace $\psi$ by $\psi I_{D}$ for a measurable subset $D$ of $\mathcal{K}$, this can be rewritten as
\begin{equation*}
\int_{0}^{t} \varphi_{r}(D) \mathrm{d} S_{r}=\int_{D}\left(\int_{0}^{t} \psi_{r}(z) \mathrm{d} S_{r}\right) \eta(\mathrm{d} z) \tag{6.9}
\end{equation*}
On the other hand, in our general setting and under (6.3), we obtain from Theorem 4.4 that for every closed $D \subseteq \mathcal{K}$, we get $P$-a.s. for all $t \geq 0$ that
\begin{equation*}
\int_{0}^{t} \varphi_{r}(D) \mathrm{d} S_{r}=\left(\varphi \bullet S_{t}\right)(D) \tag{6.10}
\end{equation*}
As the left-hand sides of (6.9) and (6.10) agree, so must the right-hand sides, whenever all the required assumptions are satisfied. Now by construction, the stochastic integral $\varphi \bullet S$ is in general only a charge-valued process; so the right-hand side of (6.10) need not be a measure in the argument $D$. On the other hand, the right-hand side of (6.9) can be expected to be a measure in the argument $D$, provided that the inner $(\mathrm{d} S$-)integral is an $\eta$-integrable function of $z$. So we can exploit the connection between (6.9) and (6.10) to get more information about $\varphi \bullet S$. The next result makes these ideas precise. Note that except for $\mathcal{K}$, its assumptions are precisely those of Protter [29, Theorem IV.65].
\begin{proposition} Suppose $S=\left(S_{t}\right)_{t \geq 0}$ is a real-valued semimartingale and $\mathcal{K}$ is a compact metric space with Borel $\sigma$-field $\mathcal{B}(\mathcal{K})$. Let $\varphi=\left(\varphi_{t}\right)_{t \geq 0}$ be a charge-valued process which satisfies the domination condition (6.5) so that $\varphi_{t}(\mathrm{~d} z)(\omega)=\psi_{t}(z)(\omega) \eta(\mathrm{d} z)$ for a finite (nonnegative) measure $\eta$ on $\mathcal{B}(\mathcal{K})$. Suppose that $\psi$ is $\mathcal{P} \otimes \mathcal{B}(\mathcal{K})$-measurable and satisfies (6.6). Then the stochastic integral process $\varphi \bullet S$ is well defined and takes values in $\mathbb{M}$. In other words, our stochastic integral is then not only charge-valued, but measure-valued.\end{proposition}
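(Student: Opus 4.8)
The plan is to exhibit an explicit $\mathbb{M}(\mathcal{K})$-valued process representing $\varphi\bullet S$ and to identify it with $\varphi\bullet S$ through the uniqueness assertion of Theorem 4.3. Since the hypotheses are, apart from $\mathcal{K}$, exactly those of Protter [29, Theorem IV.65], the first step is to invoke the classical stochastic Fubini theorem in the dominated case (6.5): it provides a jointly measurable version $Z=(Z_t(z))$ of the individual stochastic integrals $\int_0^\cdot\psi_r(z)\,\mathrm dS_r$, $z\in\mathcal K$, together with the identity (6.8). Applying (6.8) to the integrand $f\psi$ (which again satisfies (6.6) because $|f\psi|\le\|f\|_\infty|\psi|$ and $\eta$ is finite) yields, for each $f\in C(\mathcal K)$,
$$\varphi(f)\cdot S_t=\int_0^t\Big(\int_{\mathcal K}f(z)\psi_r(z)\,\eta(\mathrm dz)\Big)\mathrm dS_r=\int_{\mathcal K}f(z)Z_t(z)\,\eta(\mathrm dz)\qquad P\text{-a.s., all }t.$$

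Next I would check that $Z_t(\cdot)\in L^1(\eta)$ for $P$-a.e.\ $\omega$ and all $t$, so that $\nu_t(\omega):=Z_t(\omega)\,\eta\in\mathbb{M}(\mathcal K)$ is a genuine finite signed measure. This is the quantitative heart of the matter and the step where domination by the fixed measure $\eta$ is used decisively. Fixing a control process $V$ and a localising sequence $(\tau_M)$ with $V_{\tau_M-}\in L^2$, the estimate (2.4) gives for each $z$ that $\|Z^{\tau_M-}(z)\|_{\mathcal R^2}\le\|\psi(z)\|_{L^2(\mu_{\tau_M,V})}$; integrating $\mathrm d\eta$ and using Cauchy--Schwarz (with $\eta$ finite) together with Tonelli then yields
$$E\Big[\Big(\int_{\mathcal K}\sup_{s<\tau_M}|Z_s(z)|\,\eta(\mathrm dz)\Big)^2\Big]\le\eta(\mathcal K)\,E\Big[V_{\tau_M-}\int_0^{\tau_M-}\Big(\int_{\mathcal K}|\psi_r(z)|^2\eta(\mathrm dz)\Big)\mathrm dV_r\Big],$$
which is finite by (6.6)/(6.4) after localisation. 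Hence $\int_{\mathcal K}\sup_{s<\tau_M}|Z_s(z)|\,\eta(\mathrm dz)<\infty$ $P$-a.s., and letting $\tau_M\nearrow\infty$ shows $Z_t(\cdot)\in L^1(\eta)$ for all $t$, $P$-a.s. (One could alternatively read this integrability off directly from the classical theorem, whose statement of (6.8) already presupposes $\eta$-integrability of the inner integral.) In particular $\nu=(\nu_t)$ is then $\mathbb{M}(\mathcal K)$-valued, and $\nu(f)=\varphi(f)\cdot S\in\mathcal R^0$ for each $f$, so $\nu$ is a measure-valued weak* process in $\mathcal R^0$.

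With $\nu$ in hand, the final step is identification. The displayed identity says precisely that $\nu(f)=\varphi(f)\cdot S$ for every $f\in C(\mathcal K)$, i.e.\ $\nu$ satisfies the regular weak* Fubini property (4.2). Since Theorem 4.3 asserts that $\varphi\bullet S$ is uniquely determined on $C(\mathcal K)$ by (4.2), the two weak* processes $\varphi\bullet S$ and $\nu$ coincide: $(\varphi\bullet S)(f)$ and $\nu(f)$ are indistinguishable for each $f\in C(\mathcal K)$. As $\nu$ takes values in $\mathbb{M}(\mathcal K)$, this exhibits a measure-valued realisation of $\varphi\bullet S$, which is the assertion. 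To pin down $\nu_t$ as the canonical representative one can additionally note, via the general Fubini property (4.7) of Theorem 4.4 and (6.9), that $(\varphi\bullet S_t)(I_D)=\varphi(D)\cdot S_t=\nu_t(D)$ for every closed $D\subseteq\mathcal K$; since the closed sets form a generating $\pi$-system and $\nu_t$ is a finite measure, $\nu_t$ is determined by these values.

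The step I expect to be most delicate is the integrability $Z_t(\cdot)\in L^1(\eta)$: this is exactly the property that can fail without a fixed dominating measure (cf.\ Remark 3.11 and the counterexample in Section 7), and it is what forces the representing object to be a genuine measure rather than only a charge. The remaining subtlety is purely conceptual --- one must keep in mind that \emph{measure-valued} here means that $\varphi\bullet S$ admits an $\mathbb{M}(\mathcal K)$-valued version, i.e.\ that the object characterised by its action on $C(\mathcal K)$ is represented by a finite measure; the uniqueness in Theorem 4.3 is what makes this identification legitimate.
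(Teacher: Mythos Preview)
Your proof is correct and follows the same overall architecture as the paper: invoke the classical stochastic Fubini theorem to get a jointly measurable version $Z_t(z)$ of $\int_0^t\psi_r(z)\,\mathrm dS_r$, show $Z_t(\cdot)\in L^1(\eta)$ $P$-a.s., and identify the resulting $\mathbb M$-valued process $Z_t\,\eta$ with $\varphi\bullet S$. The identification step differs only cosmetically --- you go through uniqueness in Theorem 4.3 on $C(\mathcal K)$, while the paper uses Theorem 4.4 on closed sets $D$ via (6.12)--(6.13); both work.

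The genuine difference is in the integrability estimate. The paper decomposes $S=S_0+M+A$, bounds the finite-variation part by Cauchy--Schwarz and Tonelli, and controls the local-martingale part via Burkholder--Davis--Gundy. Your route is more economical: you apply the M\'etivier--Pellaumail control estimate (2.4) to each $\psi(z)$, square, integrate $\mathrm d\eta$, and localise so that both $V_{\tau_M-}$ and $\int_0^{\tau_M-}(\int_{\mathcal K}|\psi_r(z)|^2\eta(\mathrm dz))\,\mathrm dV_r$ are bounded. This stays entirely within the machinery the paper already built and avoids BDG; the paper's argument is more classical and slightly more explicit about which semimartingale features are being used. One small omission: you invoke Theorems 4.3 and 4.4 without first checking $\varphi\in\Phi$ (i.e., that $\varphi$ is $\mathbb M$-valued, weak$^*$ predictable, and $\|\varphi\|_{\mathrm{var}}$ is $S$-integrable); the paper does this in its step 1) via the estimate $\|\varphi_t\|_{\mathrm{var}}\le(\eta(\mathcal K)\int_{\mathcal K}|\psi_t(z)|^2\eta(\mathrm dz))^{1/2}$, and you should add a line to that effect.
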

\begin{proof}1) We first argue that $\varphi$ is $\mathbb{M}$-valued, weak${ }^{*}$ predictable and in $\Phi$. As in (6.2),
\begin{equation*}
\left\|\varphi_{t}\right\|_{\text {var }}=\left|\varphi_{t}\right|(\mathcal{K})=\int_{\mathcal{K}}\left|\psi_{t}(z)\right| \eta(\mathrm{d} z) \leq\left(\eta(\mathcal{K}) \int_{\mathcal{K}}\left|\psi_{t}(z)\right|^{2} \eta(\mathrm{d} z)\right)^{1 / 2} . \tag{6.11}
\end{equation*}
Thanks to (6.6), the right-hand side has a finite integral with respect to some control process $V$ for $S$, and if we choose (as we can) $V$ strictly increasing, the left-hand side of (6.11) must be finite $P$-a.s. for all $t \geq 0$ like the right-hand side. So $\varphi$ is $\mathbb{M}$-valued, and $\|\varphi\|_{\text {var }}$ is $S$-integrable again due to (6.6). For each $z$, the $z$-section $\psi(z)$ is predictable because $\psi$ is $\mathcal{P} \otimes \mathcal{B}(\mathcal{K})$-measurable, and for any $f \in C(\mathcal{K})$,
$$
\int\left|f(z) \psi_{t}(z)\right| \eta(\mathrm{d} z) \leq\|f\|_{\infty} \int_{\mathcal{K}}\left|\psi_{t}(z)\right| \eta(\mathrm{d} z)<\infty \quad P \text {-a.s. for all } t \geq 0
$$
shows that $\varphi(f)$ is well defined and hence a predictable process. So $\varphi$ is also weak* predictable and therefore in $\Phi$.\\
 2) Thanks to 1), Theorem 4.4 tells us that $\varphi \bullet S$ is well defined and that for every closed set $D \subseteq \mathcal{K}$,
\begin{equation*}
(\varphi \bullet S)(D)=\varphi(D) \cdot S=\int \varphi_{r}(D) \mathrm{d} S_{r}=\int\left(\int_{D} \psi_{r}(z) \eta(\mathrm{d} z)\right) \mathrm{d} S_{r} \tag{6.12}
\end{equation*}
On the other hand, the classic stochastic Fubini theorem in Protter [29, Theorem IV.65] (applied to $\psi I_{D}$ ) tells us that
\begin{equation*}
\int\left(\int_{D} \psi_{r}(z) \eta(\mathrm{d} z)\right) \mathrm{d} S_{r}=\int_{D}\left(\int \psi_{r}(z) \mathrm{d} S_{r}\right) \eta(\mathrm{d} z) \tag{6.13}
\end{equation*}
and that $\int \psi_{r}(z) \mathrm{d} S_{r}$ can be chosen product-measurable on $\bar{\Omega} \times \mathcal{K}$ so that $z \mapsto \int \psi_{r}(z) \mathrm{d} S_{r}$ is then $\mathcal{B}(\mathcal{K})$-measurable. So the right-hand side of (6.13), and hence by $(6.12)$ also $(\varphi \bullet S)(D)$, is a finite signed measure as a function of $D$ if we can show that
\begin{equation*}
\int_{\mathcal{K}}\left|\int_{0}^{t} \psi_{r}(z) \mathrm{d} S_{r}\right| \eta(\mathrm{d} z)<\infty \quad P \text {-a.s. for each } t \geq 0 \tag{6.14}
\end{equation*}
3)  By the assumption (6.6), the process $U:=\left(\int_{\mathcal{K}}|\psi(z)|^{2} \eta(\mathrm{d} z)\right)^{1 / 2}$ is $S$-integrable. By Shiryaev/Cherny [30, Definition 3.9; see also Lemma 4.11], there exists a decomposition $S=S_{0}+M+A$ into a local martingale $M$ and an adapted RCLL process $A$ of finite variation $|A|$, both null at 0 , such that $U$ is in both $L_{\mathrm{var}}(A)$ and $L_{\mathrm{loc}}^{1}(M)$. By the definition of $L_{\mathrm{var}}(A)$ in [30], this implies via Fubini-Tonelli and Cauchy-Schwarz that
\begin{align*}
\int_{\mathcal{K}}\left(\left|\int_{0}^{t} \psi_{r}(z) \mathrm{d} A_{r}\right|\right) \eta(\mathrm{d} z) & \leq \int_{\mathcal{K}}\left(\int_{0}^{t}\left|\psi_{r}(z)\right| \mathrm{d}|A|_{r}\right) \eta(\mathrm{d} z)  \tag{6.15}\\
& =\int_{0}^{t}\left(\int_{\mathcal{K}}\left|\psi_{r}(z)\right| \eta(\mathrm{d} z)\right) \mathrm{d}|A|_{r} \\
& \leq \int_{0}^{t}(\eta(\mathcal{K}))^{1 / 2} U_{r} \mathrm{~d}|A|_{r}<\infty \quad P \text {-a.s. }
\end{align*}
Next, the property $U \in L_{\text {loc }}^{1}(M)$ means that $\left(\int U^{2} \mathrm{~d}[M]\right)^{1 / 2}$ is locally integrable so that\\
along a localizing sequence $\left(\tau_{n}\right)_{n \in \mathbb{N}}$ of stopping times, we have by Fubini-Tonelli that
$$
\begin{aligned}
E\left[\left(\int_{\mathcal{K}}\left(\int_{0}^{\tau_{n}}\left|\psi_{r}(z)\right|^{2} \mathrm{~d}[M]_{r}\right) \eta(\mathrm{d} z)\right)^{1 / 2}\right] & =E\left[\left(\int_{0}^{\tau_{n}}\left(\int_{\mathcal{K}}\left|\psi_{r}(z)\right|^{2} \eta(\mathrm{d} z)\right) \mathrm{d}[M]_{r}\right)^{1 / 2}\right] \\
& =E\left[\left(\int_{0}^{\tau_{n}} U_{r}^{2} \mathrm{~d}[M]_{r}\right)^{1 / 2}\right]<\infty .
\end{aligned}
$$
By Cauchy-Schwarz,
$$
\int_{\mathcal{K}}\left(\int\left|\psi_{r}(z)\right|^{2} \mathrm{~d}[M]_{r}\right)^{1 / 2} \eta(\mathrm{d} z) \leq(\eta(\mathcal{K}))^{1 / 2}\left(\int_{\mathcal{K}}\left(\int\left|\psi_{r}(z)\right|^{2} \mathrm{~d}[M]_{r}\right) \eta(\mathrm{d} z)\right)^{1 / 2}
$$
and so using Fubini-Tonelli again yields that also
$$
\int_{\mathcal{K}} E\left[\left(\int_{0}^{\tau_{n}}\left|\psi_{r}(z)\right|^{2} \mathrm{~d}[M]_{r}\right)^{1 / 2}\right] \eta(\mathrm{d} z)=E\left[\int_{\mathcal{K}}\left(\int_{0}^{\tau_{n}}\left|\psi_{r}(z)\right|^{2} \mathrm{~d}[M]_{r}\right)^{1 / 2} \eta(\mathrm{d} z)\right]<\infty
$$
for each $n$. By the Burkholder-Davis-Gundy inequality,
$$
E\left[\left(\int \psi(z) \mathrm{d} M\right)_{\tau_{n}}^{*}\right] \leq \text { const. } E\left[\left(\int_{0}^{\tau_{n}}\left|\psi_{r}(z)\right|^{2} \mathrm{~d}[M]_{r}\right)^{1 / 2}\right]
$$
for each $z$, where the constant does not depend on $z$, and so
$$
\begin{aligned}
E\left[\int_{\mathcal{K}}\left|\int_{0}^{\tau_{n}} \psi_{r}(z) \mathrm{d} M_{r}\right| \eta(\mathrm{d} z)\right] & \leq E\left[\int_{\mathcal{K}}\left(\int \psi(z) \mathrm{d} M\right)_{\tau_{n}}^{*} \eta(\mathrm{d} z)\right] \\
& =\int_{\mathcal{K}} E\left[\left(\int \psi(z) \mathrm{d} M\right)_{\tau_{n}}^{*}\right] \eta(\mathrm{d} z) \\
& \leq \text { const. } \int_{\mathcal{K}} E\left[\left(\int_{0}^{\tau_{n}}\left|\psi_{r}(z)\right|^{2} \mathrm{~d}[M]_{r}\right)^{1 / 2}\right] \eta(\mathrm{d} z)<\infty
\end{aligned}
$$
for each $n$. This implies that
$$
\int_{\mathcal{K}}\left|\int_{0}^{t} \psi_{r}(z) \mathrm{d} M_{r}\right| \eta(\mathrm{d} z)<\infty \quad P \text {-a.s. on each } \llbracket 0, \tau_{n} \rrbracket \text {, }
$$
and so (6.14) follows by combining this with (6.15). This completes the proof.\end{proof}
If we compare the classic stochastic Fubini theorem to our new result in Theorem 4.4, we can see several differences. First, the former needs stronger assumptions on the integrand $\varphi($ or $\psi)$ than the latter; this can also be seen in the proof of Proposition 6.1. Second, if we represent $\varphi$ as in (6.1) as $\varphi_{t}(\mathrm{~d} z)=\psi_{t}(z) \rho(\omega, t, \mathrm{~d} z)$, the general weak* Fubini property (4.7) in Theorem 4.4 for $D=\mathcal{K}$ (with the two sides interchanged) takes the form
\begin{equation*}
\int_{0}^{t}\left(\int_{\mathcal{K}} \psi_{r}(z) \rho_{r}(\mathrm{~d} z)\right) \mathrm{d} S_{r}=\int_{\mathcal{K}}\left(\int_{0}^{t} \psi_{r}(z) \rho_{r}(\cdot) \mathrm{d} S_{r}\right)(\mathrm{d} z) \tag{6.16}
\end{equation*}
In comparison to the classic result (6.8), the left-hand side above is the obvious generalisation where we now mix the integrands $\psi_{r}(z)$ with $\rho_{r}(\mathrm{~d} z)$ instead of only $\eta(\mathrm{d} z)$. However, the right-hand side of (6.16) is more complicated than in (6.8); we really need the stochastic integral with respect to $S$ of the measure-valued process $\psi \rho$, and it is no longer directly possible to rewrite or construct this explicitly as a mixture (over $z$ ) of stochastic integrals $\int \psi_{r}(z) \mathrm{d} S_{r}$. This is because in contrast to the fixed measure $\eta$, the kernel $\rho$ depends on both $\omega$ and $t$ in general.\\
In comparison to the existing literature on stochastic Fubini theorems, our main contribution is twofold. At the technical level, we deal with a parametric family of integrands that is not dominated, with respect to its parameter, by a single and nonrandom measure. At the conceptual level, we achieve our results via the idea of viewing integrands as measure-valued processes and developing and using a corresponding stochastic integration theory. The latter viewpoint is similar in spirit to the approach of Bichteler and co-authors and has also been used in van Neerven/Veraar [23]; but its implementation must be done here more generally because we do not have a dominating measure $\eta$, and we also need to develop a concept of stochastic integration of measure-valued integrands with respect to general semimartingales.
\begin{remark} It is an interesting question whether our approach allows to obtain the classic stochastic Fubini theorem as a special case, and maybe even under weaker assumptions. Purely formally, one could start from (6.16), plug in $\eta$ for $\rho_{r}$ and then hope that one could move the deterministic $\eta$ outside to $\mathrm{d} z$ on the right-hand side of (6.16). This would also imply that in the dominated setting (6.5), our stochastic integral process $\varphi \bullet S$ is dominated\\
by the measure $\eta$. If $\varphi \in \mathcal{E}$ is an elementary integrand, the above results can be proved easily because all integrals then reduce to finite sums. But in general, one would probably need a similar approximation as in the key Lemma 3.6, now at the level of densities $\psi$, and it seems very likely that this would require extra conditions on $\psi$, maybe similar to the condition (6.6) in Protter [29, Theorem IV.65]. We leave this issue as an open problem and refer to Section 7 for a caveat against too much optimism.\end{remark}
\section{An illustrative example}
On the filtered probability space $(\Omega, \mathcal{F}, \mathbb{F}, P)$, let $S=W=\left(W_{t}\right)_{t \geq 0}$ be a Brownian motion. Fix $T \in(0, \infty)$ and take the compact set $\mathcal{K}=\mathcal{K}_{T}=[0, T]$. For $\alpha>0$, define
\begin{equation*}
\varphi_{t}(\mathrm{~d} z):=\alpha(z-t)^{\alpha-1} I_{\{z>t\}} \mathrm{d} z=: \psi_{t}(z) \mathrm{d} z \quad \text { for } t \geq 0 \text { and } z \in \mathcal{K} \tag{7.1}
\end{equation*}
Then each $\varphi_{t}$ is a (nonnegative) measure on $\mathcal{B}(\mathcal{K})$ with
\begin{equation*}
\left\|\varphi_{t}\right\|_{\mathrm{var}}=\varphi_{t}(\mathcal{K})=\int_{t}^{T} \alpha(z-t)^{\alpha-1} \mathrm{~d} z=(T-t)^{\alpha}<\infty \tag{7.2}
\end{equation*}
and so $\varphi=\left(\varphi_{t}\right)_{t \geq 0}$ is an $\mathbb{M}$-valued process. Moreover, as it is nonrandom, the process $\varphi$ is clearly weak* predictable. Note that $\psi_{t}(z)=0$ for $z \in \mathcal{K}=[0, T]$ and $t>T$; so it is enough to look only at $\left(\varphi_{t}\right)_{0 \leq t \leq T}$ and $\left(W_{t}\right)_{0 \leq t \leq T}$.
If we take for $W$ the control process $V_{t}=t$, then (7.2) gives
$$
\mathcal{D}_{t}\left(\|\varphi\|_{\mathrm{var}} ; V\right)=\int_{0}^{t}\left\|\varphi_{r}\right\|_{\mathrm{var}}^{2} \mathrm{~d} V_{r}=\int_{0}^{t}(T-r)^{2 \alpha} \mathrm{d} r=\frac{1}{2 \alpha+1}\left(T^{2 \alpha+1}-(T-t)^{2 \alpha+1}\right)
$$
This is finite for any $t \in[0, T]$ and $\alpha>0$ so that $\varphi$ is in $\Phi$. Thus $\varphi \bullet W$ is well defined (on $[0, T]$ ) and we can use the general weak* Fubini property (4.6) from Theorem 4.4. In particular, taking $D=[0, u]$ and computing $\varphi_{r}([0, u])=(u-r)^{\alpha} I_{\{u>r\}}$ as in (7.2), we get
$$
\left(\varphi \bullet S_{T}\right)([0, u])=\varphi([0, u]) \cdot S_{T}=\int_{0}^{T} I_{\{u>r\}}(u-r)^{\alpha} \mathrm{d} S_{r}=\int_{0}^{u}(u-r)^{\alpha} \mathrm{d} W_{r}
$$
To check whether $\varphi \bullet S$ is a measure-valued process, we want to use Proposition 6.1. From (7.1), it is clear that we are in the dominated case (6.5), with $\eta$ being Lebesgue\\
measure on $\mathcal{K}=[0, T]$, and $\varphi$ is clearly $\mathcal{P} \otimes \mathcal{B}(\mathcal{K})$-measurable. For the integrability condition (6.6), we need to look at
\begin{align*}
\int_{0}^{t}\left(\int_{\mathcal{K}}\left|\psi_{r}(z)\right| \eta(\mathrm{d} z)\right) \mathrm{d} V_{r} & =\int_{0}^{t} \int_{r}^{T} \alpha^{2}(z-r)^{2 \alpha-2} \mathrm{~d} z \mathrm{~d} r  \tag{7.3}\\
& =\frac{\alpha^{2}}{2 \alpha-1} \int_{0}^{t}(T-r)^{2 \alpha-1} \mathrm{~d} r \\
& =\frac{\alpha^{2}}{2 \alpha(2 \alpha-1)}\left(T^{2 \alpha}-(T-t)^{2 \alpha}\right)
\end{align*}
where the second equality requires $2 \alpha-2>-1$ or $\alpha>\frac{1}{2}$ to ensure that the inner integral does not diverge at $z=r$. For $\alpha>\frac{1}{2}$, the quantity in (7.3) is always finite; so (6.6) is then satisfied and Proposition 6.1 tells us that $\varphi \bullet S$ is for $\alpha>\frac{1}{2}$ a measure-valued process.\\
To get more information about $\varphi \bullet S$, we now want to use the results on Volterra-type semimartingales from Section 5. As in (5.7) and (5.4), we look at $\Psi$ given by $\Psi_{s, s}=0$ and $\Psi_{t, s} I_{\{t \geq s\}}=I_{\{t \geq s\}} \varphi_{s}((0, t])=I_{\{t \geq s\}} \int_{0}^{t} \psi_{s}(z) \mathrm{d} z=I_{\{t \geq s\}} \int_{s}^{t} \alpha(z-s)^{\alpha-1} \mathrm{~d} z=I_{\{t \geq s\}}(t-s)^{\alpha}$. Clearly $t \mapsto \Psi_{t, s}$ is of finite variation, and it also satisfies (5.9) due to (7.2) and because $\varphi \in \Phi$. We then obtain as in (5.6) that for any $\alpha>0$,
\begin{equation*}
Y_{u}=\int_{0}^{u}\left(\Psi_{u, s}-\Psi_{s, s}\right) \mathrm{d} S_{s}=\int_{0}^{u}(u-s)^{\alpha} \mathrm{d} W_{s} \tag{7.4}
\end{equation*}
and Theorem 5.1 tells us that $Y=\left(Y_{u}\right)_{0 \leq u \leq T}$ is a semimartingale if $\varphi \bullet S$ is measurevalued. But the process $Y$ in (7.4) has been studied (even for $W$ replaced by a Lévy process) in detail in Basse/Pedersen [3], and [3, Theorem 3.1], with $\sigma=1$, shows that $Y$ is a semimartingale (in the filtration of $W$ ) if and only if the function $x \mapsto f(x)=x^{\alpha}$ has $f^{\prime}$ locally square-integrable on $[0, \infty)$, which holds as in (7.3) if and only if $\alpha>\frac{1}{2}$.
\begin{remark} In this very specific example, we see that $\varphi \bullet S$ is a measure-valued process if and only if a Volterra-type process $\left(\int_{0}^{t} \Psi_{t, s} \mathrm{~d} S_{s}\right)_{t \geq 0}$ with $\Psi$ naturally associated to $\varphi$ is a semimartingale. It is very tempting to conjecture that such a result could hold more generally, maybe even beyond the dominated setting (6.5). But even formulating this precisely, let alone proving it, is beyond the scope of the present paper.\end{remark}
\noindent{\bf Acknowledgement:} Martin Schweizer gratefully acknowledges financial support by the Swiss Finance Institute (SFI). Tahir Choulli gratefully acknowledges financial support by NSERC (the Natural Sciences and Engineering Research Council of Canada, Grant G121210818). Moreover, financial support by the Forschungsinstitut f\"ur Mathematik (FIM) at ETH Z\^urich is gratefully acknowledged. It is a pleasure to thank Thomas Hille for a number of useful technical remarks and discussions on measurability, Alex Schied for some very useful comments, and Mark Veraar for promptly and helpfully answering questions on the related literature.


\end{document}